\newcolumntype{L}[1]{>{\raggedright\let\newline\\\arraybackslash\hspace{0pt}}m{#1}}
\newcolumntype{C}[1]{>{\centering\let\newline\\\arraybackslash\hspace{0pt}}m{#1}}
\newcolumntype{R}[1]{>{\raggedleft\let\newline\\\arraybackslash\hspace{0pt}}m{#1}}
\theoremstyle{plain}
\newtheorem{theorem}{\protect Theorem}[section]
\newtheorem{prop}[theorem]{\protect Proposition}
\newtheorem{definition}[theorem]{\protect Definition}
\newtheorem{lemma}[theorem]{\protect Lemma}
\newtheorem{remark}[theorem]{\protect Remark}
\newtheorem{ass}{\protect Assumption}
\newtheorem{corollary}[theorem]{\protect Corollary}
\def\d{\mathrm{d}}
\def\ie{\mathrm{i.e.}}
\def\as{\mathrm{a.s.}}
\def\RV{\mathrm{r.v.}}
\newcommand{\Lip}{\mathrm{Lip}}
\newcommand{\R}{\mathbb{R}}
\newcommand{\E}{\mathbb{E}}
\newcommand{\T}{\top}
\newcommand{\F}{\mathcal{F}}
\newcommand{\Fb}{\mathbb{F}}
\newcommand{\Pb}{\mathbb{P}}
\newcommand{\Pc}{\mathcal{P}}
\newcommand{\tr}{\mathrm{tr}}
\newcommand{\Law}{\mathcal{L}}
\newcommand{\Gb}{\mathbb{G}}
\newcommand{\G}{\mathcal{G}}
\newcommand{\Qb}{\mathscr{Q}}
\newcommand{\pa}{\partial}
\newcommand{\ps}{\mathscr{P}}
\newcommand{\M}{\mathcal{M}}
\newcommand{\supp}{\mathrm{supp}}
\newcommand{\U}{\mathscr{U}}
\newcommand{\tb}{\tilde{b}}
\newcommand{\ts}{\tilde{\sigma}}
\newcommand{\tg}{\tilde{\gamma}}
\newcommand{\tf}{\tilde{f}}
\newcommand{\tp}{\tilde{p}}
\newcommand{\tP}{\tilde{P}}
\newcommand{\tK}{\tilde{K}}
\newcommand{\tH}{\tilde{H}}
\newcommand{\lv}{\lVert}
\newcommand{\rv}{\rVert}
\newcommand{\J}{\mathcal{J}}
\newcommand{\tN}{\tilde{N}}
\newcommand{\assref}[1]{\hyperref[#1]{Assumption \ref*{#1}}}
\newcommand{\thmref}[1]{\hyperref[#1]{Theorem \ref*{#1}}}
\newcommand{\propref}[1]{\hyperref[#1]{Proposition \ref*{#1}}}
\newcommand{\remref}[1]{\hyperref[#1]{Remark \ref*{#1}}}
\newcommand{\lemref}[1]{\hyperref[#1]{Lemma \ref*{#1}}}
\newcommand{\defref}[1]{\hyperref[#1]{Definition \ref*{#1}}}
\newcommand{\corref}[1]{\hyperref[#1]{Corollary \ref*{#1}}}
\newcommand{\equref}[1]{\hyperref[#1]{(\ref*{#1})}}
\newcommand{\exaref}[1]{\hyperref[#1]{Example \ref*{#1}}}
\title{Extended mean-field control problems with Poissonian common noise: Stochastic maximum principle and Hamiltonian-Jacobi-Bellman equation}
\author{Lijun Bo \thanks{Email: lijunbo@ustc.edu.cn, School of Mathematics and Statistics, Xidian University, Xi'an, 710126, China.}
	\and
	Jingfei Wang \thanks{Email:wjf2104296@mail.ustc.edu.cn, School of Mathematical Sciences, University of Science and Technology of China, Hefei, 230026, China.}
	\and
	Xiaoli Wei \thanks{Email: xiaoli.wei@hit.edu.cn, Institute for Advance Study in Mathematics, Harbin Institute of Technology, Harbin, China.}
	\and
	Xiang Yu \thanks{Email: xiang.yu@polyu.edu.hk, Department of Applied Mathematics, The Hong Kong Polytechnic University, Kowloon, Hong Kong.}
}
\date{\vspace{-0.8cm}}
\begin{document}
	\maketitle
	
	\begin{abstract}
		This paper studies mean-field control problems with state-control joint law dependence and Poissonian common noise. We develop the stochastic maximum principle (SMP) and establish its connection to the Hamiltonian-Jacobi-Bellman (HJB) equation on the Wasserstein space. The presence of the conditional joint law and its discontinuity under Poissonian common noise bring new technical challenges. To develop the SMP when the control domain is not necessarily convex, we first consider a strong relaxed control formulation that allows us to perform the first-order variation. We propose the technique of extension transformation to overcome the compatibility issues arising from the joint law in the relaxed control formulation. By further establishing the equivalence between the relaxed control and the strict control formulations, we obtain the SMP for the original problem with strict controls. In the part to investigate the HJB equation, we formulate an equivalent controlled Fokker-Planck problem subjecting to a controlled measure-valued dynamics with Poisson jumps, which allows us to derive the HJB equation of the original problem under open-loop strict controls. We also establish the connection between the SMP and the HJB equation.

		\vspace{0.1in}
		\noindent\textbf{Keywords}: Extended mean-field control, Poissonian common  noise, relaxed control formulation, stochastic maximum principle, HJB equation
	\end{abstract}
	
	\section{Introduction}\label{sec:intro}
	
	Mean-field control (MFC) problem, also known as the optimal McKean-Vlasov control problem, has attracted great attention in recent years. This type of control problem is closely related to mean-field games (MFG) initially introduced by Larsy-Lions \cite{LL2007} and Huang-Caines-Malham\'e \cite{Huangetal2006} as both problems are used to approximate the asymptotic behavior of large population systems of {interacting agents}. On the other hand, it is well known that these two problems focus on different types of interactions and objectives. The MFG problem models the competitive interactions where each agent aims to maximize his own cost that results in the Nash equilibrium as the solution, while the MFC problem models the cooperative interactions where all agents jointly optimize the social optimum that leads to the optimal control by the social planner. The scope of this paper is to investigate MFC problems by featuring both the state-control joint law dependence and Poissonian common noise.

	Stochastic Maximum Principle (SMP) and Dynamic Programming Principle (DPP) are two fundamental methods in tackling a wide variety of optimal control problems in different contexts. In particular, SMP aims at establishing some necessary and sufficient conditions for optimality of control by using the techniques in calculus of variations. It states that any optimal control and the resulting controlled state process must solve the so-called extended Hamiltonian system comprised of an adjoint process in terms of the backward stochastic differential equation (BSDE),  the controlled state SDE, and the maximum condition. Bensoussan \cite{Bensoussan1981} derived a form of SMP via the first-order variation for controlled diffusion processes when the domain of control is convex. For systems of controlled diffusion and possibly non-convex control domain, Peng \cite{Peng1990} established a generalized SMP with two adjoint equations, usually referred as Peng's SMP, by utilizing the ``spike-variation method" and the second-order expansion. Bahlali~\cite{Bahlali} considered a relaxed (measure-valued) control formulation and established SMP through the first-order expansion when the control domain is not convex thanks to the fact that the space of relaxed controls is inherently convex. The SMP is further derived for problems with strict controls in Bahlali~\cite{Bahlali} by showing the equivalence between the relaxed and strict control formulations under certain assumptions. The methodology in Bensoussan \cite{Bensoussan1981} and Peng \cite{Peng1990} have also been generalized to cope with MFC problems in mean-field models recently. For example, \cite{AD2010,Buckdahn2011, MB2012, Djehicheetal2015} addressed MFC problems with coefficients depending on the moments of the population state distribution. Li~\cite{Li2012} explored MFC problems and the SMP in scalar interaction forms. Carmona and Delarue \cite{Carmona2015} solved a MFC problem with general dependence on the distribution of the state process. Bo et al. \cite{BLY} examined a linear quadratic (LQ) MFC problem with Brownian common noise by focusing on the Gamma convergence of the optimal controls from the N-player games to the mean-field model. Acciaio et al.~\cite{Carmona1} considered extended MFC problems involving the joint law of the controlled state process and the control process without common noise. Some previous studies, for instance \cite{Shenetal2014, Hafayed2014, Zhangetal2018}, also addressed MFC problems in jump diffusion or regime switching models without common noise. Recently, Nie and Yan~\cite{Nie2022} investigated extended MFC problems with partial observations.
	
	On the other hand, DPP is to decompose a global optimization problem into a series of recursive local optimization problems. Due to the distribution dependence, the value function needs to be defined on the lifted space of probability measures to recover the time consistency. The DPP in the mean-field model has been established in various contexts, such as  \cite{Lauriere2014, Bensoussan2015} under the assumption that the distribution of the state admits a density function; Pham and Wei \cite{pham2017} { in a Brownian common noise framework with semi-open-loop controls  adapted only to common noise}; \cite{Bayraktaretal2018, cosso2019} for MFC control problem with open-loop controls and coefficients relying on the marginal laws of the state and the control; Djete et al. \cite{Djeteetal2022} in a general framework of MFC with common noise, open-loop controls and coefficients depending on the joint conditional law of the path of the state-control; Cosso et al.~\cite{Cossoetal2023} for path-dependent Hilbert space valued MFC problem with open-loop controls and coefficients depend on the marginal laws of the state and the control. On the strength of the established DPP, one can apply some recent developments of stochastic calculus on flow of measures to derive and study the associated HJB equation on the Wasserstein space of probability measures. Different notions of solutions to the HJB equation and some existence results can be found in some recent studies. To name a few, we refer to \cite{BLPR2017, Chassagneux2022} for discussions on the classical solution and refer to \cite{Wuzhang2020, Burzoni2020, Soner2022, Bayraktar2023, Cossoetal2024, Zhou2024} for some investigations on the viscosity solutions on the Wasserstein space and different versions of comparison principles.

	All aforementioned studies only considered the common noise modeled by a Brownian motion while the idiosyncratic noise can either be a Brownian motion or a Poisson random measure.  Recently, \cite{HHRG2023, HHRG2024} introduced some interesting mean-field game problems where the common noise is driven by a Poisson random measure. Unlike the common Brownian noise, Poissonian common noise can effectively capture unexpected or rare events that simultaneously impact the entire system of players, leading to distinct phenomena and mathematical conclusions comparing with the existing results with common Brownian noise. Inspired by these MFG problems in \cite{HHRG2023, HHRG2024}, we are interested in MFC problems in the setting of Poissonian common noise from the social planner's perspective and aim to lay new theoretical foundations to develop SMP and HJB equation in the presence of both Poissonian common noise and the conditional joint law of state-control.

	Our first main contribution is to establish the SMP for the extended MFC problem in both relaxed and strict (open-loop) control formulations with a general control domain (not necessarily convex) under minimal assumptions. We generalize the methodology in Bahlali~\cite{Bahlali} to the mean-field model to cope with the joint law dependence and the Poissonian common noise. We first establish SMP in the strong relaxed control formulation (see \thmref{RNEC} and \thmref{RSUF}), which does not require the original domain of strict control to be convex and the first order variation can be exercised (see Subsection \ref{subsec:first-orderva}). However, contrary to classical single agent's control problems in Bahlali~\cite{Bahlali}, the dependence on the conditional joint law significantly hinders some standard arguments within the relaxed control formulation. In response, we propose several new key ingredients to overcome these issues, which are illustrated in the roadmap of our methodology in Figure \ref{SMP}. Firstly, we propose the \textit{extension transformation} to resolve the joint law formulation—specifically, transforming a mapping $h:\Pc_2(\R^n\times U)\to E$ to $\tilde h: \Pc_2(\R^n\times\Pc(U))\to E$ to ensure the consistency of the joint distribution in the relaxed formulation (see its definition in \eqref{eq:extensionh}) such that we can work with the joint conditional law of the state process and the relaxed control (instead of the strict control). Moreover, we demonstrate that this extension transformation maintains smooth properties of original functionals, see \lemref{extensioneq} and \lemref{lem:condexpect}. Secondly, we choose to work with the { linear functional derivative} of functionals on Banach product spaces equipped with suitable norms. Thirdly, we establish the key equivalence between the strong relaxed formulation and the strict control formulation by using the Chattering Lemma and other technical arguments; see \lemref{valueeq} and \lemref{equvalent}. As a consequence, we obtain the SMP in the strict control formulation and establish the sufficient and necessary condition on the optimal strict control as desired; see  \thmref{SNEC} and \thmref{SSUF}. 
	
	\begin{figure}
		\centering
		\scalebox{0.9}
		{
			\begin{tikzpicture}[
				box/.style={rectangle, draw=blue, thick, minimum width=3cm, minimum height=1.5cm, align=center},
				arrow/.style={->, thick},
				label/.style={black, font=\small, sloped, above}
				]
				
				\node[box] (strict) {Strict control formulation\\ \textcircled{1} non-convex control domain;\\
					\textcircled{2} state-control joint-law dependence};
				\node[box, below=6cm of strict] (relaxed) {Relaxed control formulation};
				\node[box, right=5.4cm of strict] (strictSMP) {SMP for strict control};
				\node[box, right=6cm of relaxed] (relaxedSMP) {SMP for relaxed control};
				
				\draw[arrow] (strict) -- node[label, above=5pt] {\parbox{5cm}{Extension Transformation}} (relaxed);
				\draw[arrow] (strict) -- node[label, below=5pt] {\parbox{5cm}{addressing the consistency in \textcircled{2}}} (relaxed);
				\draw[arrow] (relaxed) -- node[label, below=5pt] {first-order variation} (relaxedSMP);
				\draw[arrow] (relaxed) -- node[label, above=5pt] {convex set $\Pc(U)$ addressing \textcircled{1}} (relaxedSMP);
				\draw[arrow] (relaxedSMP) -- node[label, above=5pt] {\parbox{3cm}{chattering lemma}}  (strictSMP);
				
			\end{tikzpicture}
		}
		\caption{Our methodology for SMP}
		\label{SMP}
	\end{figure}

	Our second main contribution is the derivation of HJB equation via a controlled Fokker-Planck problem and its equivalence to the original MFC problem.  To the best of our knowledge, {Djete et al. \cite{Djeteetal2022} is the only work that discussed the HJB equation under Brownian common noise involving the conditional joint law. They first rigorously established the DPP in various formulations, and then derived the HJB equation in the general strong formulation with state-dependent coefficients}.  Our problem involves both state-control joint-law dependence and Poissonian common noise. Unlike existing studies, the presence of Poissonian common noise gives rise to the discontinuity of the conditional law of the state, thereby posing new challenges in deriving the HJB equation. In particular, it becomes crucial to understand how the jumps of conditional law of state affect the value function. Motivated by Motte and Pham \cite{Pham}, which addressed the extended MFC problems in the discrete time setting with common noise and open-loop controls, we first construct a new  controlled Fokker-Planck problem with kernel-valued controls only adapted to Poissonian common noise filtration (see \eqref{equ:extended-FP-mu_t}-\eqref{liftMFC-value}). We reveal that the conditional state law has jumps whose sizes are characterized by {\it generalized} measure shifts in terms of some adjoint operator; see \lemref{measure_shift} and~\eqref{extended-operatorA_0*}. This fact, combined with the It\^o's formula on flows of conditional probability measures (see \lemref{Ito}), leads to an associated HJB equation on the space of probability measures in \eqref{extended-HJB-1-poisson-common-noise}. Our main objective is to show that the value function of the original problem {coincides with} that of the controlled Fokker Planck problem.  The roadmap of our method is illustrated in Figure~\ref{HJB}. In Subsection \ref{LCP}, we prove that the value function of the original problem might be larger than that of the controlled Fokker-Planck problem (see \lemref{lem:inequality-from-op-fp}).  The idea is mainly based on the disintegration of the joint conditional law by Bayes' formula. This allows us to  find a corresponding kernel-valued control of the controlled Fokker-Plank problem for every open-loop strict control of the original problem.  The challenge is the way back from the kernel-valued controls to the open-loop strict controls.  In Subsection \ref{sec:the controlled Fokker-Planck problem-less-original}, our strategy is to first prove the verification theorem for the controlled Fokker-Planck problem, which allows us to focus on near-optimal feedback kernel-valued controls for the controlled Fokker-Planck problem (see \propref{thm:verification}). Moreover, when the near optimal feedback optimal policy takes the form of the lifted randomized policy (see \defref{randomized_feedback}), we  demonstrate that there exists a sequence of piecewise constant controls such that the error between two associated cost functionals diminishes as the size of the time grid tends to zero (see \propref{prop:diff-ol-fp}).   As a consequence, in \thmref{equivalencevalue}, we show that the value functions of these two problems coincide, and both are given by the classical solution of the HJB equation, by combining  \lemref{lem:inequality-from-op-fp},  \propref{thm:verification}, and \propref{prop:diff-ol-fp}. As a byproduct, the value function of the original problem satisfies the {\it conditional law invariance} property; see \remref{vlsequivalence}.  We highlight that our new approach, based on the new controlled Fokker-Planck problem, can effectively resolve all issues caused by the discontinuity of the joint law and is applicable to more general settings beyond the Poissonian common noise such as L\'{e}vy common noise. Moreover, we also show in the present paper that the derived HJB equation differs substantially from the counterpart in jump diffusion models without common noise.  
	
	Finally, our third contribution is the connection of HJB equation to SMP. When the HJB equation has a smooth solution (which coincides with the value function under a measurable assumption), we verify that the solution to the adjoint BSDE can be expressed in terms of the value function and its derivatives with respect to the probability measures along the optimal state trajectory, and the connection between the SMP and HJB equation holds; see \thmref{smp_HJB}. We further illustrate this connection with explicit results in a LQ-type extended MFC problem with Poissonian common noise, in which we can fully understand how the Poissonian common noise affects the forms of the derived BSDE and the HJB equation.
	
	\begin{figure}
		\centering
		\scalebox{0.9}
		{
			\begin{tikzpicture}[
				box/.style={rectangle, draw=blue, thick, minimum width=4cm, minimum height=2cm, align=center},
				arrow/.style={thick, ->, >=Stealth},
				every label/.style={black, font=\footnotesize}
				]
				
				\node[box] (strict1) {Original problem (\equref{extended-MFC-dynamics}-\equref{valuefunc})\\
					\textcircled{1} Open-loop strict controls\\
					\textcircled{2}  Piecewise constant controls};
				\node[box, right=4cm of strict1] (strict2) {Controlled Fokker-Planck\\
					problem (\eqref{equ:extended-FP-mu_t}-\eqref{liftMFC-value})\\
					\textcircled{1} kernel-valued controls\\
					\textcircled{2}near optimal lifted randomized \\
					feedback policies};
				
				\draw[arrow] ([yshift=5mm]strict1.east) --  node[above] {Section~\ref{sec:original-less-the controlled Fokker-Planck problem} (``$\geq$")} ([yshift=5mm]strict2.west);
				\draw[arrow] ([yshift=-5mm]strict2.west) --  node[below] {Section~\ref{sec:the controlled Fokker-Planck problem-less-original}(``$\leq$")} ([yshift=-5mm]strict1.east);
				
			\end{tikzpicture}
		}
		\caption{Our methodology for HJB equation}
		\label{HJB}
	\end{figure}
	
	The rest of the paper is organized as follows. Section \ref{sec:model} introduces some technical preparations and two formulations of the extended MFC problems with Poissonian common noise in both the strict and relaxed sense. In Section \ref{sec:SMP}, we first develop the SMP for the extended MFC problems with Poissonian common noise in the relaxed formulation using the first order variation. We then establish the equivalence result between two different formulations to drive the SMP for the strict extended MFC problems. Section \ref{sec:HJB} derives the HJB equation of the extended MFC problem and reveals its connection to the SMP. Section \ref{sec:example} studies a LQ-type example of the MFC problem with Poissonian common noise using both the SMP and the HJB equation methods, and further illustrates the connections between these two methods. Some proofs of auxiliary results in previous sections are reported in Appendices \ref{appendix_for_sec2}, \ref{appendix} and \ref{sec:appendix-B}.

	\vspace{0.1in}
	\noindent{\it Notations.}\quad We list below some notations that will be used frequently throughout the paper:
	\vspace{-0.2in}
	\begin{center}
		\begin{longtable}{l l}
			$T\in(0,\infty)$ & Terminal time horizon \\
			$\tt m$ & The Lebesgue measure on $\R$\\
			${\rm Lip}_1(E)$ & Set of Lipschitz continuous functions on $E$ with Lipschitz coefficient\\
			& no more than $1$\\
			${\rm Lip}_b(E)$ & Set of Lipschitz continuous functions on $E$ { with bounded supremum}\\ &{norm}\\
			$|\cdot|$ & Euclidean norm on $\R^n$\\
			$a\cdot b$ & Dot product of vectors $a\in\R^n$ and $b\in\R^n$\\
			$C_b(\R^n)$ & Set of bounded and continuous real-valued functions on $\R^n$\\
			$C(I;E)$ & Set of $E$-valued continuous functions defined on $I$\\
			$L^2((A,\mathcal{B}(A),\lambda_A);E)$ & Set of square-integrable $E$-valued random variable (r.v.) defined on\\
			& measure space  $(A,\mathcal{B}(A),\lambda_A)$. We shall abbreviate it as $L^2(A; E)$ \\
			&or $L^2(A)$ when there is no confusion. \\
			${\cal P}(E)$ (${\cal P}_p(E)$)  & Set of probability measures on $E$ (with finite $p$-order moments)\\
			$d_{p,E}$ & $p$-order Wasserstein metric on $\Pc_p(E)$\\
			$\partial_x$ & Partial derivative w.r.t. the argument $x\in\R$\\
			$\nabla_x$ ($\nabla^2_x$) & Gradient (Hessian) operator w.r.t. the argument $x\in\R^n$\\
			${\cal L}^P(\xi)$ & Law of a $E$-valued random variable (r.v.) $\xi$ under $P$.\\
			&We will drop the superscript and abbreviate it as $\Law(\xi)$ when $P=\Pb$\\
			${\cal L}^P(\xi|{\cal G})$ & Regular conditional law of a $E$-valued r.v. $\xi$ given a $\sigma$-field ${\cal G}$ under $P$.\\
			&We will drop the superscript and abbreviate it as $\Law(\xi|\G)$ when $P=\Pb$\\
			$\E^P$ & Expectation operator under probability measure $P$\\
			&and we shall abbreviate it $\E^{\Pb}(\E^{\Pb'})$ as $\E(\E')$ respectively\\
		\end{longtable}
	\end{center}
	
	\section{Setup and Problem Formulation}\label{sec:model}
	
	In this section, we first introduce two formulations of the extended MFC problem with Poissonian common noise, namely the strict control formulation and the (strong) relaxed control formulation. The main feature of the extended MFC problem lies in the presence of the joint law of the state and control in the controlled McKean-Vlasov dynamics and the objective functional. More precisely, in the strict control formulation of our MFC problem, the mean-field interaction is captured by
	the joint law of the state and strict control that belongs to $\Pc(\R^n\times U)$, where $U$ is referred to as control space. In the relaxed control formulation, we consider the joint law of the state and relaxed control that belongs to $\Pc(\R^n\times\Pc(U))$.  This essential difference motivates us to construct an extension transformation from a mapping $h:\Pc(\R^n\times U)\mapsto\R$ to the corresponding mapping $\tilde h:\Pc(\R^n\times \Pc(U))\mapsto\R$, which can preserve some smooth properties of $h$. This key extension plays an important role in our analysis, which will be introduced with details in \eqref{eq:affinemaping}-\eqref{eq:extensionh}.
	
	\subsection{Basic spaces and the extension transformation}\label{subsecB}
	
	Let $(B,\|\cdot\|)$ be a Banach space and $U\subset B$ be a compact subset. Define the product space $K:=\R^n\times B$, endowed with the product norm $\|(x,u)\|_K:=|x|+\|u\|$ for $(x,u)\in K$. Thus,  $(K,\lv\cdot\rv_K)$ is again a Banach space. Denote by $\mathcal{M}(U)$ the set of finite signed Radon measures on the measurable space $(U,\mathscr{B}(U))$, {equipped with the Fortet-Mourier norm} 
	\begin{equation*}
		\|q\|_{U,{\rm FM}}:=\sup_{\substack{f\in\Lip_1(U)\\ \| f\|_{\infty}\leq 1}} \int_U f(u)q(\d u),\quad \forall q\in\mathcal{M}(U),
	\end{equation*}
	where $\|f\|_{\infty}:=\sup_{u\in U}|f(u)|$ for $f:U\mapsto\R$ in $C(U;\R)$. Then $(\M(U),\|\cdot\|_{U,{\rm FM}})$ is a separable Banach space. Let $d_{U,{\rm FM}}$ be the metric induced by the norm $\|\cdot\|_{U,{\rm FM}}$ on $\Pc(U)$, and it follows that $(\Pc(U),d_{U,{\rm FM}})$ is a compact Polish space. Here, we consider $\M(U)$ instead of $\Pc(U)$ directly because we are going to define partial $L$-derivative with respect to $q\in\M(U)$.
	
	Let $V:=\R^n\times\M(U)$ be a separable Banach product space  equipped with the product norm $\|\cdot\|_{V}$ defined by $\|(x,q)\|_V:=|x|+\|q\|_{U,{\rm FM}}$ for $(x,q)\in V$. For any $\xi_1,\xi_2\in{\cal P}_2(V)$, consider the Kantorovich-Rubinstein metric (which is equivalent to 1-Wasserstein metric according to Kantorovich duality) 
	\begin{equation}\label{eq:metric-dKR}
		d_{\rm KR}(\xi_1,\xi_2)=\sup_{f\in\Lip_1(V)}\left(\int_Vf(x,q)\xi_1(\d x,\d q)-\int_Vf(x,q)\xi_2(\d x,\d q)\right).
	\end{equation}
	Then, $(\Pc_2(V),d_{\rm KR})$ is a Polish space and we have the next result.
	
	\begin{lemma}\label{metriceq}
		For any $\xi_1,\xi_2\in\Pc_2(\R^n\times\Pc(U))\subset \Pc_2(V)$ (one can identify $\xi\in\Pc_2(\R^n\times\Pc(U))$ as an element in $\Pc_2(V)$ with support in $\R^n\times\Pc(U)$), it holds that
		\begin{equation*}
			d_{\rm KR}(\xi_1,\xi_2)=\sup_{f\in\Lip_1(\R^n\times\Pc(U))}\left(\int_{\R^n}\int_{\Pc(U)}f(x,q)\xi_1(\d x,\d q)-\int_{\R^n}\int_{\Pc(U)}f(x,q)\xi_2(\d x,\d q)\right).
		\end{equation*}
	\end{lemma}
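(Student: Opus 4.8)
The plan is to prove the identity by two matching one‑sided inequalities. The inequality in which $d_{\rm KR}(\xi_1,\xi_2)$ is bounded above by the right‑hand side is essentially free: every $f\in\Lip_1(V)$ restricts to a competitor on $\R^n\times\Pc(U)$. The reverse inequality is the substantive one, and I would obtain it by extending an arbitrary $1$-Lipschitz function on $\R^n\times\Pc(U)$ to a $1$-Lipschitz function on all of $V$ via the McShane--Whitney formula. The one preliminary that makes both halves work is the observation that, when restricted to $\R^n\times\Pc(U)$, the metric on $V$ induced by $\|\cdot\|_V$ coincides with the intrinsic metric $((x_1,q_1),(x_2,q_2))\mapsto|x_1-x_2|+d_{U,{\rm FM}}(q_1,q_2)$, because $d_{U,{\rm FM}}(q_1,q_2)=\|q_1-q_2\|_{U,{\rm FM}}$ for $q_1,q_2\in\Pc(U)$; hence restriction sends $\Lip_1(V)$ into $\Lip_1(\R^n\times\Pc(U))$ and, conversely, a $1$-Lipschitz function on $\R^n\times\Pc(U)$ is $1$-Lipschitz for exactly this restricted metric.

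For the ``$\le$'' direction: given $f\in\Lip_1(V)$, note that $f$ has at most linear growth and so is $\xi_i$-integrable since $\xi_i\in\Pc_2(V)\subset\Pc_1(V)$; as $\xi_1,\xi_2$ are carried by $\R^n\times\Pc(U)$ we have $\int_V f\,\d\xi_i=\int_{\R^n}\int_{\Pc(U)}f\,\d\xi_i$ for $i=1,2$, and $f|_{\R^n\times\Pc(U)}\in\Lip_1(\R^n\times\Pc(U))$. Thus the integral functional in \eqref{eq:metric-dKR} is dominated by the supremum on the right‑hand side of the asserted identity, and taking the supremum over $f\in\Lip_1(V)$ gives the bound.

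For the ``$\ge$'' direction: given $f\in\Lip_1(\R^n\times\Pc(U))$, define
\[
\tilde f(x,q):=\inf_{(y,\mu)\in\R^n\times\Pc(U)}\Big\{f(y,\mu)+|x-y|+\|q-\mu\|_{U,{\rm FM}}\Big\},\qquad(x,q)\in V.
\]
I would then check the standard facts: $\tilde f$ is finite‑valued (bounded above by evaluating at a fixed reference point, and bounded below using the triangle inequality together with the $1$-Lipschitz bound on $f$); $\tilde f$ is $1$-Lipschitz on $V$, being an infimum of the $1$-Lipschitz maps $(x,q)\mapsto f(y,\mu)+\|(x,q)-(y,\mu)\|_V$; and $\tilde f=f$ on $\R^n\times\Pc(U)$, since there the infimum is attained at $(y,\mu)=(x,q)$. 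Because $\xi_1,\xi_2$ are supported on $\R^n\times\Pc(U)$, it follows that $\int_V\tilde f\,\d\xi_1-\int_V\tilde f\,\d\xi_2=\int_{\R^n}\int_{\Pc(U)}f\,\d\xi_1-\int_{\R^n}\int_{\Pc(U)}f\,\d\xi_2$, and by \eqref{eq:metric-dKR} the left‑hand side is at most $d_{\rm KR}(\xi_1,\xi_2)$; taking the supremum over $f$ closes the argument.

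I do not anticipate a genuine obstacle here. The only points requiring care are the metric‑compatibility remark in the first paragraph (so that both restriction and the McShane extension preserve the Lipschitz constant $1$ exactly, rather than only up to an equivalence constant) and the verification that $\tilde f$ is finite, which is needed precisely because elements of $\Lip_1$ are not assumed bounded. Compactness of $\Pc(U)$ plays no role beyond guaranteeing the embedding $\Pc_2(\R^n\times\Pc(U))\hookrightarrow\Pc_2(V)$ used implicitly in the statement.
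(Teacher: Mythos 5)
Your proposal is correct and follows essentially the same route as the paper, which likewise handles the ``$\le$'' direction by restricting elements of $\Lip_1(V)$ and the ``$\ge$'' direction by invoking the McShane extension theorem. You have merely written out the details the paper leaves implicit (the explicit McShane--Whitney formula, finiteness, exact preservation of the Lipschitz constant, and the support/integrability observations), all of which check out.
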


	Let $\mathcal{M}(K)$ be the set of finite signed Radon measures on the measurable space $(K,\mathscr{B}(K))$ equipped with the following Fortet-Mourier norm:
	\begin{equation*}
		\| \rho\|_{K,{\rm FM}}=\sup_{\substack{f\in\Lip_1(K)\\ \lv f\rv_{\infty}\leq 1}}\int_K f(w)\rho(\d w),\quad\forall\rho\in\mathcal{M}(K).
	\end{equation*}
	
	In the sequel, we denote by $x,u,q,\rho,\xi$ the generic elements in $\R^n,U,\M(U),\M(K),\Pc_2(V)$ respectively, and denote by $\mu,\nu$ the marginals of $\xi$. Note that $\Pc_2(\R^n\times U)\subset\Pc_2(K)\subset\M(K)$. We endow $\Pc_2(\R^n\times U)$ with the metric induced by $\lv\cdot\rv_{K,{\rm FM}}$, and hence it becomes a Polish space. Similar to \lemref{metriceq}, the topology induced by this metric is equivalent to the weak convergence topology on $\Pc_2(\R^n\times U)$. Because $U$ is compact, it holds that, for all $\xi\in\Pc_2(\R^n\times\Pc(U))$,
	\begin{equation*}
		\int_{\mathcal{M}(U)}q(\d u)\xi(\d x,\d q)=\int_{\Pc(U)}q(\d u)\xi(\d x,\d q)\in\Pc_2(\R^n\times U).
	\end{equation*}
	We can then define an affine mapping $\ps:\Pc_2(\R^n\times\Pc(U))\mapsto\Pc_2(\R^n\times U)$ as a Bochner integral 
	\begin{equation}\label{eq:affinemaping}
		\ps(\xi)(\d x,\d u):=\int_{\mathcal{M}(U)}q(\d u)\xi(\d x,\d q),\quad \forall \xi\in\Pc_2(\R^n\times\Pc(U)).
	\end{equation}
	
	For any mapping $h:\mathcal{P}_2(\R^n\times U)\to\R$, we define $\tilde{h}:\Pc_2(\R^n\times\Pc(U))\mapsto\R$ as its \textit{extension transformation} to $\mathcal{P}_2(\R^n\times\Pc(U))$ in the sense that
	\begin{equation}\label{eq:extensionh}
		\tilde{h}(\xi):=h(\ps(\xi)),\quad\forall\xi\in\Pc_2(\R^n\times {\cal P}(U)).
	\end{equation}
	Then, we have the next results.
	\begin{lemma}\label{extensioneq}
		If $h:{\cal P}_2(\R^n\times U)\mapsto\R$ is Lipschitz continuous, so is its extension transformation $\tilde{h}:{\cal P}_2(\R^n\times {\cal P}(U))\mapsto\R$  defined by \eqref{eq:extensionh}.
	\end{lemma}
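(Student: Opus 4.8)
The plan is to exploit the factorization $\tilde h=h\circ\ps$ furnished by the definition \equref{eq:extensionh} and to reduce everything to a Lipschitz estimate for the affine mapping $\ps$ of \equref{eq:affinemaping}. Precisely, I would show that $\ps\colon(\Pc_2(\R^n\times\Pc(U)),d_{\rm KR})\to(\Pc_2(\R^n\times U),\|\cdot\|_{K,{\rm FM}})$ is $1$-Lipschitz; since $h$ is, by hypothesis, Lipschitz for the metric induced by $\|\cdot\|_{K,{\rm FM}}$, the composition $\tilde h=h\circ\ps$ is then automatically Lipschitz, with the same constant as $h$, which is the claim.

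To prove that $\ps$ is $1$-Lipschitz, fix $\xi_1,\xi_2\in\Pc_2(\R^n\times\Pc(U))$ and a test function $f\in\Lip_1(K)$ with $\|f\|_\infty\le1$. By \equref{eq:affinemaping} and Fubini's theorem,
\begin{align*}
&\int_K f(x,u)\,\ps(\xi_1)(\d x,\d u)-\int_K f(x,u)\,\ps(\xi_2)(\d x,\d u)\\
&\qquad=\int_{\R^n\times\Pc(U)}F(x,q)\,\xi_1(\d x,\d q)-\int_{\R^n\times\Pc(U)}F(x,q)\,\xi_2(\d x,\d q),\qquad F(x,q):=\int_U f(x,u)\,q(\d u).
\end{align*}
The crucial step is to verify that $F\in\Lip_1(\R^n\times\Pc(U))$ for the metric induced by $\|\cdot\|_V$. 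For the $x$-argument, $f$ being $1$-Lipschitz on $K$ gives $|F(x_1,q)-F(x_2,q)|\le\int_U|f(x_1,u)-f(x_2,u)|\,q(\d u)\le|x_1-x_2|$; for the $q$-argument, the map $u\mapsto f(x,u)$ lies in $\Lip_1(U)$ with $\|f(x,\cdot)\|_\infty\le1$, so that by the very definition of the Fortet--Mourier norm, $|F(x,q_1)-F(x,q_2)|=\big|\int_U f(x,u)\,(q_1-q_2)(\d u)\big|\le\|q_1-q_2\|_{U,{\rm FM}}$. Adding these two bounds via the triangle inequality yields $|F(x_1,q_1)-F(x_2,q_2)|\le|x_1-x_2|+\|q_1-q_2\|_{U,{\rm FM}}=\|(x_1,q_1)-(x_2,q_2)\|_V$, i.e.\ $F\in\Lip_1(\R^n\times\Pc(U))$.

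With $F\in\Lip_1(\R^n\times\Pc(U))$, \lemref{metriceq} (which identifies $d_{\rm KR}$ on $\Pc_2(\R^n\times\Pc(U))$ with the supremum of $\int f\,\d\xi_1-\int f\,\d\xi_2$ over $f\in\Lip_1(\R^n\times\Pc(U))$), together with the invariance of $\Lip_1$ under $f\mapsto-f$, gives $\big|\int F\,\d\xi_1-\int F\,\d\xi_2\big|\le d_{\rm KR}(\xi_1,\xi_2)$. Taking the supremum over all admissible $f$ then yields $\|\ps(\xi_1)-\ps(\xi_2)\|_{K,{\rm FM}}\le d_{\rm KR}(\xi_1,\xi_2)$, and composing with the Lipschitz map $h$ completes the argument. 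The only genuinely delicate point is the $\Lip_1$ estimate for $F$ --- specifically, that its $q$-modulus of continuity is exactly the Fortet--Mourier norm, which is where the normalization $\|f\|_\infty\le1$ in the definition of $\|\cdot\|_{K,{\rm FM}}$ enters; the rest is a routine Fubini-plus-triangle-inequality computation, so no real obstacle is expected.
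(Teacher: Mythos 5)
Your proposal is correct and follows essentially the same route as the paper's own proof: factor $\tilde h=h\circ\ps$, reduce to showing $\ps$ is $1$-Lipschitz via Fubini, and verify that $F_f(x,q)=\int_U f(x,u)\,q(\d u)$ lies in $\Lip_1(\R^n\times\Pc(U))$ by splitting the $x$- and $q$-increments exactly as the paper does. No substantive difference.
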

	
	\begin{lemma}\label{lem:condexpect}
		Let $(\Omega,\F,\mathbb{P})$ be a given probability space and $\G\subset\F$ be a sub-$\sigma$-algebra. Then, for any $\R^n$-valued square-integrable random variable $X$ and $U$-valued random variable $\alpha$, it holds that, $\Pb$-a.s.
		\begin{equation*}
			\tilde{h}(\Law((X,\delta_{\alpha})|\G))=h(\Law((X,\alpha)|\G)),
		\end{equation*}
		where $\delta_{\alpha}$ denotes the Dirac measure concentrated on $\alpha$.
	\end{lemma}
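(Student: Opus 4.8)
The plan is to reduce everything to the measure identity
\[
\ps\big(\Law((X,\delta_\alpha)\mid\G)\big)=\Law((X,\alpha)\mid\G)\qquad \Pb\text{-a.s.},
\]
since then $\tilde h(\Law((X,\delta_\alpha)\mid\G))=h\big(\ps(\Law((X,\delta_\alpha)\mid\G))\big)=h(\Law((X,\alpha)\mid\G))$ by the very definition \eqref{eq:extensionh} of the extension transformation, for \emph{any} mapping $h$. So the first thing I would check is that both sides of the displayed identity are well defined. The map $u\mapsto\delta_u$ is continuous from $U$ into the compact space $(\Pc(U),d_{U,{\rm FM}})$, so $(X,\delta_\alpha)$ is a genuine $\R^n\times\Pc(U)$-valued random variable, and it is square-integrable because $X$ is and $\Pc(U)$ is bounded. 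Hence its regular conditional law $\xi:=\Law((X,\delta_\alpha)\mid\G)$ exists as a $\Pc_2(\R^n\times\Pc(U))$-valued random variable, and $\ps(\xi)$ is the $\Pc_2(\R^n\times U)$-valued random variable obtained by applying the Bochner integral \eqref{eq:affinemaping} pathwise.

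Next I would test against a countable family of functions. Let $\{\varphi_k\}_{k\ge1}\subset{\rm Lip}_b(\R^n\times U)$ be a countable measure-determining family for $\Pc(\R^n\times U)$ (such a family exists since $\R^n\times U$ is a separable metric space). Fix $k$ and set $\Phi_k(x,q):=\int_U\varphi_k(x,u)\,q(\d u)$ for $(x,q)\in\R^n\times\Pc(U)$; the estimate carried out after \eqref{eq:Ff} shows that $\Phi_k$ is bounded and (Lipschitz) continuous on $\R^n\times\Pc(U)$, so it is a legitimate bounded test function against $\xi$. By Fubini's theorem (valid because $\varphi_k$ is bounded) and the definition of $\ps$,
\[
\int_{\R^n\times U}\varphi_k(x,u)\,\ps(\xi)(\d x,\d u)=\int_{\R^n\times\Pc(U)}\Phi_k(x,q)\,\xi(\d x,\d q)=\E\big[\Phi_k(X,\delta_\alpha)\,\big|\,\G\big]\qquad \Pb\text{-a.s.},
\]
the last equality being the defining property of the regular conditional law $\xi$. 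Since $\Phi_k(X,\delta_\alpha)=\int_U\varphi_k(X,u)\,\delta_\alpha(\d u)=\varphi_k(X,\alpha)$, the right-hand side equals $\E[\varphi_k(X,\alpha)\mid\G]=\int_{\R^n\times U}\varphi_k(x,u)\,\Law((X,\alpha)\mid\G)(\d x,\d u)$, $\Pb$-a.s.

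Finally I would remove the $k$-dependence of the exceptional null set: taking the union over $k\ge1$ of the countably many $\Pb$-null sets produced above yields a single $\Pb$-full event on which $\int\varphi_k\,\d\ps(\xi)=\int\varphi_k\,\d\Law((X,\alpha)\mid\G)$ holds for every $k$; since $\{\varphi_k\}$ is measure-determining, $\ps(\xi)=\Law((X,\alpha)\mid\G)$ on that event, which is the claimed identity, and hence the lemma follows. The only genuinely delicate point in this argument is the measure-theoretic bookkeeping — measurability of the $\Pc_2$-valued conditional laws and the passage from ``for each $\varphi_k$, $\Pb$-a.s.'' to ``$\Pb$-a.s., for all $\varphi_k$'' via a countable measure-determining class; everything else reduces to the elementary pushforward identity $\int_U\varphi(x,u)\,\delta_a(\d u)=\varphi(x,a)$ together with Fubini's theorem.
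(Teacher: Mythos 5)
Your proof is correct and follows essentially the same route as the paper: both arguments reduce the claim to the identity $\ps(\Law((X,\delta_\alpha)\mid\G))=\Law((X,\alpha)\mid\G)$ and then invoke the definition \eqref{eq:extensionh} of the extension transformation. The paper simply asserts this identity via the pushforward computation $\int_{\M(U)}q(\d u)\,\Law((X,\delta_\alpha)\mid\G)(\d x,\d q)=\Law((X,\alpha)\mid\G)(\d x,\d u)$, whereas you verify it carefully by testing against a countable measure-determining family and managing the null sets — a welcome but not substantively different elaboration.
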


	\subsection{{ linear functional derivatives} w.r.t. measures of the extension}
	
	We adopt the definition of { linear functional derivative} in Banach space  in Buckdahn et al.~\cite{Buckdahn}.
	\begin{definition}\label{def:derivativelinear}
		Let	$(\mathcal{K},\lv\cdot\rv_{\mathcal{K}})$ be a Banach space and $I\subset\Pc_2(\mathcal{K})$ be a convex subset. We say that a mapping $h:I\mapsto\R$ has a { linear functional derivative} $\frac{\delta h}{\delta m}:I\times\mathcal{K}\mapsto\R$, if $\frac{\delta h}{\delta m}$ is a continuous function over $I\times\mathcal{K}$ such that, for all $m,m'\in I$,
		\begin{equation*}
			h(m')-h(m)=\int_0^1\int_{\mathcal{K}}\frac{\delta h}{\delta m}(m+\lambda(m'-m),y)(m'-m)(\d y)\d\lambda.
		\end{equation*}
		Moreover, there exists a constant $C>0$ such that $\left|\frac{\delta h}{\delta m}(m,y)\right|\leq C(1+\lv y\rv_{\mathcal{K}}^2)$ for all $y\in\bigcup\limits_{m\in I}\supp(m)$.
	\end{definition}
	
	Then, we have the following result regarding the extension transformation.
	
	\begin{lemma}\label{extensionLdif}
		Assume the existence of the { linear functional derivative} $\frac{\delta h}{\delta\rho}:\Pc_2(\R^n\times U)\times K\mapsto\R$ for a given mapping $h:\Pc_2(\R^n\times U)\mapsto\R$. Then, the associated extended mapping $\tilde{h}:\Pc_2(\R^n\times\Pc(U))\subset\Pc_2(V)\mapsto\R$ in \eqref{eq:extensionh} also has a { linear functional derivative} $\frac{\delta \tilde{h}}{\delta \xi}:\Pc_2(\R^n\times\Pc(U))\times V\mapsto \R$ such that, for all $(x,q)\in V$,
		\begin{equation}\label{eq:chain}
			\frac{\delta \tilde{h}}{\delta \xi}(\xi)(x,q)=\int_U \frac{\delta h}{\delta\rho}(\ps(\xi))(x,u)q(\d u),
		\end{equation}
		where $\ps:\Pc_2(\R^n\times\Pc(U))\mapsto\Pc_2(\R^n\times U)$ is the affine mapping given in \eqref{eq:affinemaping}.
	\end{lemma}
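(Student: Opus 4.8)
The plan is to exploit the fact that the affine mapping $\ps$ in \eqref{eq:affinemaping} is in fact linear, so that it carries line segments in $\Pc_2(\R^n\times\Pc(U))$ to line segments in $\Pc_2(\R^n\times U)$, and then simply transport the defining identity of $\frac{\delta h}{\delta\rho}$ through $\ps$ by Fubini's theorem. Concretely, for $\xi,\xi'\in\Pc_2(\R^n\times\Pc(U))$ (a convex set, so the increments stay inside) set $\xi_\lambda:=\xi+\lambda(\xi'-\xi)$ for $\lambda\in[0,1]$; linearity of the Bochner integral gives $\ps(\xi_\lambda)=\ps(\xi)+\lambda(\ps(\xi')-\ps(\xi))$, so applying \defref{def:derivativelinear} to $h$ at $\ps(\xi)$ and $\ps(\xi')$ yields
\begin{align*}
\tilde h(\xi')-\tilde h(\xi)=h(\ps(\xi'))-h(\ps(\xi))=\int_0^1\!\!\int_K\frac{\delta h}{\delta\rho}(\ps(\xi_\lambda))(x,u)\big(\ps(\xi')-\ps(\xi)\big)(\d x,\d u)\,\d\lambda.
\end{align*}
Since $\big(\ps(\xi')-\ps(\xi)\big)(\d x,\d u)=\int_{\M(U)}q(\d u)(\xi'-\xi)(\d x,\d q)$ by \eqref{eq:affinemaping}, a Fubini interchange turns the inner integral into $\int_V\big(\int_U\frac{\delta h}{\delta\rho}(\ps(\xi_\lambda))(x,u)q(\d u)\big)(\xi'-\xi)(\d x,\d q)$, which is exactly the defining identity of \defref{def:derivativelinear} for $\tilde h$ with the candidate $\frac{\delta\tilde h}{\delta\xi}$ equal to the right-hand side of \eqref{eq:chain}. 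The Fubini step is legitimate because $U$ is compact, so the quadratic growth bound on $\frac{\delta h}{\delta\rho}$ reduces $|\frac{\delta h}{\delta\rho}(\ps(\xi_\lambda))(x,u)|$ to a bound of the form $C'(1+|x|^2)$, which is integrable against $|\xi'-\xi|\le\xi'+\xi$ as both measures lie in $\Pc_2$.

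It then remains to check that this candidate meets the two regularity requirements of \defref{def:derivativelinear}. The growth bound is immediate: using $|\frac{\delta h}{\delta\rho}(\rho)(x,u)|\le C(1+\|(x,u)\|_K^2)$ together with the compactness of $U$ (so $\|u\|$ is bounded on $U$) and the fact that $q\in\Pc(U)$ is a probability measure (so $\int_U q(\d u)=1$ and $\|q\|_{U,{\rm FM}}\le 1$), one obtains $|\frac{\delta\tilde h}{\delta\xi}(\xi)(x,q)|\le C'(1+|x|^2)\le C''(1+\|(x,q)\|_V^2)$ for all $(x,q)$ in the support of any $\xi\in\Pc_2(\R^n\times\Pc(U))$. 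For continuity I would argue sequentially: given $\xi_n\to\xi$ in $\Pc_2(\R^n\times\Pc(U))$, $x_n\to x$ in $\R^n$ and $q_n\to q$ in $(\Pc(U),d_{U,{\rm FM}})$, the Lipschitz continuity of $\ps$ established within the proof of \lemref{extensioneq} makes $\{\ps(\xi_n)\}\cup\{\ps(\xi)\}$ a compact subset of $\Pc_2(\R^n\times U)$, on which — together with the compact sets $\{x_n\}\cup\{x\}$ and $U$ — the continuous function $\frac{\delta h}{\delta\rho}$ is uniformly continuous; hence $\sup_{u\in U}|\frac{\delta h}{\delta\rho}(\ps(\xi_n))(x_n,u)-\frac{\delta h}{\delta\rho}(\ps(\xi))(x,u)|\to 0$. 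Splitting the difference $\frac{\delta\tilde h}{\delta\xi}(\xi_n)(x_n,q_n)-\frac{\delta\tilde h}{\delta\xi}(\xi)(x,q)$ into a term controlled by this supremum and the term $\int_U\frac{\delta h}{\delta\rho}(\ps(\xi))(x,u)(q_n-q)(\d u)$, which vanishes by the weak convergence $q_n\to q$ applied to the integrand that is bounded and continuous on the compact set $U$, yields the desired joint continuity.

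The main obstacle I anticipate is not the chain-rule computation, which is essentially bookkeeping once the linearity of $\ps$ is noted, but rather pinning down the joint continuity of $\frac{\delta\tilde h}{\delta\xi}$ on $\Pc_2(\R^n\times\Pc(U))\times V$: the difficulty is that the candidate is an integral in $q$ of a map that is only jointly continuous — not a priori uniformly so — in the remaining variables, and the $\Pc_2$-spaces are not locally compact. The device above, namely using that a convergent sequence together with its limit forms a compact set and that $\ps$ and $\frac{\delta h}{\delta\rho}$ are continuous with $U$ compact in order to upgrade to uniform continuity in $u$ along the sequence, is what resolves it; one should also be mildly careful that the relevant continuity of $\frac{\delta h}{\delta\rho}$ is with respect to the topology on $\Pc_2(\R^n\times U)$ induced by $\|\cdot\|_{K,{\rm FM}}$, which is precisely the target topology into which \lemref{extensioneq} shows $\ps$ is (Lipschitz) continuous.
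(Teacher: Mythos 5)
Your proposal is correct and follows essentially the same route as the paper's proof: exploit the affinity of $\ps$ so that $\ps(\xi_1+\lambda(\xi_2-\xi_1))=\ps(\xi_1)+\lambda(\ps(\xi_2)-\ps(\xi_1))$, transport the defining identity of $\frac{\delta h}{\delta\rho}$ through $\ps$, interchange the integrals by Fubini, and then verify the quadratic growth bound using the compactness of $U$. The only difference is that you additionally spell out the justification of the Fubini step and the joint continuity of the candidate derivative (via uniform continuity on the compact set formed by a convergent sequence and its limit), points the paper leaves implicit; both additions are sound.
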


	In what follows, we consider a mapping $h:\Pc_2(\R^n\times U)\mapsto\R$ such that its { linear functional derivative}  $\frac{\delta h}{\delta\rho}$ exists. We also assume that the partial derivative $\pa_x(\frac{\delta h}{\delta\rho}(\rho)(x,u))$ w.r.t. the argument $x$ exists, is continuous and of at most linear growth, i.e., there exists a constant $C>0$ such that
	\begin{equation*}
		\left|\pa_x\left(\frac{\delta h}{\delta\rho}(\rho)(x,u)\right)\right|\leq C(1+|x|),\quad \forall (x,u)\in \R^n\times U.
	\end{equation*}
	The next result is a direct consequence of \lemref{extensionLdif}.
	
	\begin{lemma}\label{extensiondif}
		The { linear functional derivative} $\frac{\delta \tilde{h}}{\delta \xi}$ is Frech\'{e}t differentiable, and the vector of partial derivatives:
		\begin{equation*}
			\pa\frac{\delta \tilde{h}}{\delta \xi}=\left(\pa_x\frac{\delta \tilde{h}}{\delta \xi},\pa_q\frac{\delta \tilde{h}}{\delta \xi}\right):\Pc_2(\R^n\times\Pc_2(U))\times V\to V^*:=\R^n\times\M^*(U)  \end{equation*}
		is continuous. Furthermore, it holds that
		\begin{equation*}
			\begin{cases}
				\displaystyle\pa_x\left(\frac{\delta \tilde{h}}{\delta \xi}(\xi)(x,q)\right)=\int_U\pa_x\left(\frac{\delta h}{\delta\rho}(\mathscr{P}(\xi))(x,u)\right)q(\d u)\in\R^n;\\
				\displaystyle\pa_q\left(\frac{\delta \tilde{h}}{\delta \xi}(\xi)(x,q)\right)(\cdot)=\int_U\frac{\delta h}{\delta\rho}(\mathscr{P}(\xi))(x,u)(\cdot)(\d u)=\frac{\delta\tilde h}{\delta \xi}(\xi)(x,\cdot)\in\M^*(U).
			\end{cases}
		\end{equation*}
		In particular, there exists a constant $C>0$ such that, for all $(x,q)\in\R^n\times\Pc(U)$,
		\begin{equation*}
			\left|\pa_x\left(\frac{\delta \tilde{h}}{\delta \xi}(\xi)(x,q)\right)\right|\leq C(1+|x|).
		\end{equation*}
	\end{lemma}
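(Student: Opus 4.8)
The plan is to derive everything from the chain-rule formula \eqref{eq:chain} in \lemref{extensionLdif}, which already identifies $\frac{\delta\tilde h}{\delta\xi}(\xi)(x,q)=\int_U\frac{\delta h}{\delta\rho}(\ps(\xi))(x,u)\,q(\d u)$. First I would fix $\xi$ and $\ps(\xi)$ and examine the dependence of this quantity on the variable $(x,q)\in V=\R^n\times\M(U)$. The map $q\mapsto\int_U\frac{\delta h}{\delta\rho}(\ps(\xi))(x,u)\,q(\d u)$ is \emph{already linear} in $q$, so its Fr\'echet derivative in the $q$-direction is itself, namely the bounded linear functional $\M(U)\ni\cdot\mapsto\int_U\frac{\delta h}{\delta\rho}(\ps(\xi))(x,u)(\cdot)(\d u)\in\M^*(U)$; boundedness follows because $U$ is compact and $\frac{\delta h}{\delta\rho}(\ps(\xi))(x,\cdot)$ is continuous, hence bounded, on $U$. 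For the $x$-direction, I would differentiate under the integral sign: since by hypothesis $\pa_x\bigl(\frac{\delta h}{\delta\rho}(\rho)(x,u)\bigr)$ exists, is continuous, and satisfies $|\pa_x(\frac{\delta h}{\delta\rho}(\rho)(x,u))|\le C(1+|x|)$ uniformly in $u\in U$, dominated convergence (with the dominating function $C(1+|x|)$, integrable against the probability measure $q$) justifies
\begin{align*}
\pa_x\left(\frac{\delta\tilde h}{\delta\xi}(\xi)(x,q)\right)=\int_U\pa_x\left(\frac{\delta h}{\delta\rho}(\ps(\xi))(x,u)\right)q(\d u)\in\R^n.
\end{align*}

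Next I would assemble these two partials into the vector $\pa\frac{\delta\tilde h}{\delta\xi}=(\pa_x\frac{\delta\tilde h}{\delta\xi},\pa_q\frac{\delta\tilde h}{\delta\xi})$ and argue that it is the genuine Fr\'echet derivative of $(x,q)\mapsto\frac{\delta\tilde h}{\delta\xi}(\xi)(x,q)$ on the product Banach space $V$. Because the map splits additively through its two arguments after the chain-rule representation --- the $q$-part is exactly linear, the $x$-part is differentiable with a continuous derivative --- it is enough to check that the candidate derivative is jointly continuous in $(x,q)$; joint Fr\'echet differentiability of a map on a product space follows from existence of the partials plus continuity of the partial-derivative map. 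For the $\pa_q$-component, continuity in $x$ comes from the continuity of $x\mapsto\frac{\delta h}{\delta\rho}(\ps(\xi))(x,\cdot)$ in the sup-norm on $C(U)$ (which embeds into $\M^*(U)$ with norm domination), combined with continuity in $\xi$ from the continuity requirement built into \defref{def:derivativelinear}; the $\pa_q$-component does not depend on $q$ at all, so there is nothing to check there. For the $\pa_x$-component, continuity in $(x,q,\xi)$ follows from the assumed continuity of $(\rho,x,u)\mapsto\pa_x(\frac{\delta h}{\delta\rho}(\rho)(x,u))$, the linear-growth bound (to get uniform integrability and pass limits under $\int_U\cdot\,q(\d u)$), and the Lipschitz continuity of $\ps$ from \lemref{extensioneq}. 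This also records the claimed identity $\pa_q(\frac{\delta\tilde h}{\delta\xi}(\xi)(x,q))(\cdot)=\frac{\delta\tilde h}{\delta\xi}(\xi)(x,\cdot)$.

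Finally, the growth bound on $\pa_x\frac{\delta\tilde h}{\delta\xi}$ is immediate from the displayed formula: for $(x,q)\in\R^n\times\Pc(U)$,
\begin{align*}
\left|\pa_x\left(\frac{\delta\tilde h}{\delta\xi}(\xi)(x,q)\right)\right|\le\int_U\left|\pa_x\left(\frac{\delta h}{\delta\rho}(\ps(\xi))(x,u)\right)\right|q(\d u)\le\int_U C(1+|x|)\,q(\d u)=C(1+|x|),
\end{align*}
since $q$ is a probability measure. The same computation underlies all the uniform-integrability passages above, so I would prove this bound early and reuse it.

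I expect the main obstacle to be the continuity of the $\pa_q$-component as a map into the dual space $\M^*(U)$: one must be careful that the natural object $\frac{\delta h}{\delta\rho}(\ps(\xi))(x,\cdot)\in C(U)$ really does define a bounded linear functional on $(\M(U),\|\cdot\|_{U,\mathrm{FM}})$ with operator norm controlled by a sup-type norm of the integrand, and that continuity in $(x,\xi)$ in the $C(U)$ topology transfers to continuity in the $\M^*(U)$ operator-norm topology --- this is where the compactness of $U$ and the precise form of the Fortet-Mourier norm are used. Everything else is differentiation under the integral sign with the linear-growth domination already in hand.
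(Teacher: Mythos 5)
Your proposal is correct and matches the paper's intent: the paper offers no written proof, stating only that the lemma is ``a direct consequence'' of the chain-rule representation \eqref{eq:chain} in \lemref{extensionLdif}, and your argument (linearity in $q$ for the $\pa_q$-component, differentiation under the integral sign justified by the assumed continuity and linear growth of $\pa_x\frac{\delta h}{\delta\rho}$ for the $\pa_x$-component, and the immediate bound since $q$ is a probability measure) is exactly the elaboration the authors have in mind. The subtlety you flag at the end --- that a merely continuous integrand need not give a functional bounded in the Fortet--Mourier norm, so some Lipschitz-in-$u$ control is implicitly needed to land in $\M^*(U)$ --- is genuine, but it is a gap in the paper's statement as much as in your proof, and in the sequel the functional is only ever applied to differences of probability measures, where it is well defined.
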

	Notably, by \lemref{extensiondif}, the differentiability of $\frac{\delta\tilde h}{\delta\xi}$ w.r.t. $q\in\Pc(U)$ after extension does not require the differentiability of $\frac{\delta h}{\delta\rho}$ w.r.t. $u\in U$.  Hence, the extension lifts the differentiablity in this sense.
	
	Similar to Proposition 4.1 in  Buckdahn et al.~\cite{Buckdahn}, we have the following result.
	\begin{lemma}\label{diff}
		Let $(\Omega,\F,\Pb)$ be a probability space. For any $(X,q)\in L^2((\Omega,\F,\Pb);\R^n\times\Pc(U))$, as $\epsilon\downarrow0$, it holds that
		\begin{equation*}
			\begin{aligned}
				\tilde{h}(\Law((X,q)&+\epsilon(X'-X,q'-q)))-\tilde{h}(\Law(X,q))=\E\left[\pa_x \left(\frac{\delta \tilde{h}}{\delta \xi}(\Law(X,q))(X,q)\right)\epsilon\cdot (X'-X)\right]\\
				&+\E\left[\int_U\frac{\delta h}{\delta \rho}(\mathscr{P}(\Law(X,q)))(X,q)\epsilon(q'-q)(\d u)\right]+o(\epsilon),
			\end{aligned}
		\end{equation*}
		for all $(X',q')\in L^2((\Omega,\F,\Pb);\R^n\times\Pc(U))$.  	
	\end{lemma}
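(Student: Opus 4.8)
The plan is to differentiate the identity $\tilde h(\Law(X,q)) = h(\mathscr{P}(\Law(X,q)))$ along the perturbation $(X+\epsilon(X'-X), q+\epsilon(q'-q))$ by reducing to the known first-order expansion for $h$ on $\Pc_2(\R^n\times U)$ (the analogue of Proposition 4.1 in Buckdahn et al.~\cite{Buckdahn}), and then identify the two resulting terms using the chain rule for linear derivatives established in \lemref{extensionLdif} and the partial-derivative formulas in \lemref{extensiondif}. First I would write $m_\epsilon := \Law((X,q)+\epsilon(X'-X,q'-q))$ and $m_0 := \Law(X,q)$, both elements of $\Pc_2(\R^n\times\Pc(U))\subset\Pc_2(V)$, so that $\tilde h(m_\epsilon)-\tilde h(m_0) = h(\mathscr{P}(m_\epsilon)) - h(\mathscr{P}(m_0))$. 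Since $\mathscr{P}$ is affine (indeed a Bochner integral, by \eqref{eq:affinemaping}), one has $\mathscr{P}(m_\epsilon) = \Law(X+\epsilon(X'-X), \cdot)$ pushed through $q\mapsto q$; more precisely $\mathscr{P}(m_\epsilon)(\d x,\d u) = \E[\,\mathbf{1}_{X+\epsilon(X'-X)\in\d x}\,(q+\epsilon(q'-q))(\d u)\,]$, which is exactly $\Law_{(\tilde X_\epsilon,\tilde U_\epsilon)}$ where $\tilde X_\epsilon = X+\epsilon(X'-X)$ and $\tilde U_\epsilon$ is a $U$-valued variable with conditional law $q+\epsilon(q'-q)$ given $(X,q,X',q')$. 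This realizes $\mathscr{P}(m_\epsilon)$ as a genuine perturbation of a law on $\Pc_2(\R^n\times U)$, to which the first-order expansion for $h$ applies.

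Next I would apply the expansion for $h$: as $\epsilon\downarrow 0$,
\begin{align*}
h(\mathscr{P}(m_\epsilon)) - h(\mathscr{P}(m_0))
&= \int_{\R^n\times U} \frac{\delta h}{\delta\rho}(\mathscr{P}(m_0))(x,u)\,\big(\mathscr{P}(m_\epsilon)-\mathscr{P}(m_0)\big)(\d x,\d u) + o(\epsilon).
\end{align*}
Then I substitute $\mathscr{P}(m_\epsilon)-\mathscr{P}(m_0) = \E\big[\,\delta_{\tilde X_\epsilon}(\d x)\otimes(q+\epsilon(q'-q))(\d u) - \delta_X(\d x)\otimes q(\d u)\,\big]$, split the difference into the $X$-shift part and the $q$-shift part, and expand each. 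For the $X$-part, a first-order Taylor expansion in $x$ of $\frac{\delta h}{\delta\rho}(\mathscr{P}(m_0))(\cdot,u)$ (whose $\pa_x$ exists, is continuous and of linear growth, as assumed before \lemref{extensiondif}) integrated against $q(\d u)$ produces, via the formula $\pa_x\big(\frac{\delta\tilde h}{\delta\xi}(\xi)(x,q)\big)=\int_U\pa_x\big(\frac{\delta h}{\delta\rho}(\mathscr{P}(\xi))(x,u)\big)q(\d u)$ from \lemref{extensiondif}, the term $\E\big[\pa_x\big(\frac{\delta\tilde h}{\delta\xi}(m_0)(X,q)\big)\epsilon\cdot(X'-X)\big]$. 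For the $q$-part, the term is exactly $\E\big[\int_U \frac{\delta h}{\delta\rho}(\mathscr{P}(m_0))(X,u)\,\epsilon(q'-q)(\d u)\big]$ (here the integrand is evaluated at $(X,u)$; note the statement writes $(X,q)$ in the first slot, consistent with $\frac{\delta h}{\delta\rho}$ being a function of $(x,u)$ with $x=X$), which matches the claimed second term.

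The main obstacle will be controlling the remainder uniformly in $\epsilon$, i.e.\ justifying that the cross terms and the Taylor remainders are genuinely $o(\epsilon)$ in $L^1(\Pb)$. Two points need care: (i) the remainder in the $h$-expansion is $o(\|\mathscr{P}(m_\epsilon)-\mathscr{P}(m_0)\|_{K,\mathrm{FM}})$, and one must check this Wasserstein-type distance is $O(\epsilon)$ — this follows since $\mathscr{P}$ is Lipschitz (shown inside the proof of \lemref{extensioneq}) and $d_{\rm KR}(m_\epsilon,m_0) = O(\epsilon)$ because $X,X'\in L^2$ and $\Pc(U)$ is bounded (as $U$ is compact); (ii) the Taylor remainder $\frac{\delta h}{\delta\rho}(\mathscr{P}(m_0))(\tilde X_\epsilon,u) - \frac{\delta h}{\delta\rho}(\mathscr{P}(m_0))(X,u) - \pa_x(\cdots)(X,u)\cdot\epsilon(X'-X)$ must be shown to be $o(\epsilon)$ after taking $\E\int_U\,\cdot\,q(\d u)$, which one gets from continuity of $\pa_x\frac{\delta h}{\delta\rho}$, its linear growth bound, square-integrability of $X,X'$, and dominated convergence; the $\epsilon\cdot(q'-q)$ contribution to the $X$-linear term is $O(\epsilon^2)$ since $\|q'-q\|_{U,\mathrm{FM}}$ is bounded. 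Assembling these estimates yields the stated expansion.
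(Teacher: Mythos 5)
The paper does not actually prove this lemma: it is introduced only with the remark that it is ``similar to Proposition 4.1 in Buckdahn et al.'', and no argument for it appears in the appendix (the appendix proof of \lemref{dif} concerns the variational equation, a different statement), so there is nothing to compare against line by line. Your proposal is a correct way to supply the missing argument, and it is the route the surrounding text clearly intends: pass everything through the affine map $\ps$, apply the exact integral identity of \defref{def:derivativelinear} to $h$ between $\ps(m_0)$ and $\ps(m_\epsilon)$, split the signed measure $\ps(m_\epsilon)-\ps(m_0)=\E[(\delta_{X_\epsilon}-\delta_X)\otimes q]+\epsilon\,\E[\delta_{X_\epsilon}\otimes(q'-q)]$ into its state-shift and control-shift parts, and identify the two first-order terms via the chain-rule formulas of \lemref{extensionLdif} and \lemref{extensiondif}. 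Your reading of the second term (the integrand is $\frac{\delta h}{\delta\rho}(\ps(m_0))(X,u)$ integrated against $(q'-q)(\d u)$, despite the statement's notation) is also the right one. The only step I would phrase more carefully is where you invoke the expansion with the derivative already frozen at $\ps(m_0)$ and a remainder ``$o(\|\ps(m_\epsilon)-\ps(m_0)\|_{K,{\rm FM}})$'': the linear derivative gives an exact formula with the averaged measure $\rho_\lambda=\ps(m_0)+\lambda(\ps(m_\epsilon)-\ps(m_0))$ inside, and since $\ps(m_\epsilon)-\ps(m_0)$ has total variation of order one (only its action on test functions is $O(\epsilon)$), the replacement $\frac{\delta h}{\delta\rho}(\rho_\lambda)\to\frac{\delta h}{\delta\rho}(\ps(m_0))$ is not automatically $o(\epsilon)$ in the abstract; it should be justified after the two-part decomposition, where each part already carries an explicit factor $\epsilon$ (resp.\ the Taylor increment $\epsilon(X'-X)$), so that joint continuity of $\frac{\delta h}{\delta\rho}$ and $\pa_x\frac{\delta h}{\delta\rho}$ together with the quadratic and linear growth bounds and dominated convergence yield the $o(\epsilon)$ estimate. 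With the steps ordered that way, your argument is complete.
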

	
	We also give the definition of the (partial) $L$-derivative as below.
	
	\begin{definition}\label{Ldif}
		The partial $L$-derivative of the mapping $h:\Pc_2(\R^n\times U)\mapsto\R$ w.r.t. the probability measure $\mu\in{\cal P}_2(\R^n)$ is defined by, for all $(\rho,x,u)\in\Pc_2(\R^n\times U)\times \R^n\times U$,
		\begin{equation*}
			\pa_{\mu}h(\rho)(x,u):=\pa_x\frac{\delta h}{\delta\rho}(\rho)(x,u).
		\end{equation*}
		Similarly, we can also define the partial $L$-derivative w.r.t. the probability measure $\nu\in\Pc_2(\M(U))$ for the mapping $\tilde h:\Pc_2(\R^n\times\Pc(U))\mapsto\R$. The partial $L$-derivative of $\tilde h$ with respect to $\nu$ is defined by, for all $(\xi,x,q)\in\Pc_2(\R^n\times\Pc(U))\times \R^n\times \Pc(U)$,
		\begin{equation*}
			\pa_{\nu}\tilde h(\xi)(x,q):=\pa_q\frac{\delta\tilde h}{\delta\xi}(\xi)(x,q).
		\end{equation*}
	\end{definition}
	
	Note that we are not using the classical definition of $L$-differentiability here (more precisely, define the $L$-derivative via lifting functions) because we are considering the joint law involving the law of controls such that the classical methods are not applicable. It follows from \lemref{extensiondif} that, the (partial) $L$-derivative of the extension of $h$ is given by
	\begin{align}\label{eq:tildehmu=}
		\pa_{\mu}\tilde h(\xi)(x,q)&=\pa_x\frac{\delta\tilde h}{\delta \xi}(\xi)(x,q)=\int_U\pa_{\mu}h(\mathscr{P}(\xi))(x,u)q(\d u),\\
		\pa_{\nu}\tilde h(\xi)(x,q)(\cdot)&=\pa_q\frac{\delta\tilde h}{\delta \xi}(\xi)(x,q)(\cdot)=\int_U\frac{\delta h}{\delta\rho}(\mathscr{P}(\xi))(x,u)(\cdot)(\d u).
	\end{align}

	\subsection{Formulations of extended MFC with Poissonian common noise}\label{sec:extendedMFCPoiCommon}
	
	
	{ Consider a  standard $d$-dimensional Brownian motion $W=(W_t)_{t\in[0,T]}$ defined on the probability space $(\Omega^1,\F^1,\Pb^1)$ and a Poisson random measure $N(\d z,\d t)$ on some measurable space $(Z,\mathscr{Z})$ with intensity $\lambda(\d z)\d t$ satisfying $\lambda(Z)<\infty$ on the probability space $(\Omega^2,\F^2,\Pb^2)$. Define
		\begin{equation}\label{eq:probabspace}
			\Omega=\Omega^1\times\Omega^2,\quad \F=\F^1\otimes\F^2,\quad \Pb=\Pb^1\otimes\Pb^2.
		\end{equation}
		To simplify the notations, we denote by $W$ and $N$ the natural extensions of $W$ and $N$ to $\Omega$, respectively. 
		Let $\mathcal{H}_0\subset\mathcal{F}^1$ be a $\sigma$-field independent of $W$ that is sufficiently rich in the sense that for any $\mu\in\mathcal{P}_2(\mathbb{R}^n)$, there exists an $\mathcal{H}_0$-measurable random variable $X$ such that $\mathcal{L}^{\mathbb{P}^1}(X)=\mu$.
		
		Consider the filtration
		$\Fb=(\F_t)_{t\in[0,T]}$ by $\F_t=\mathcal{H}_t\otimes\F_t^N$ with $\mathcal{H}_t=\mathcal{H}_0\vee\sigma(W_s;s\leq t)\subset \F^1$ and $\F_t^N=\sigma(N((0,s]\times A);s\leq t,A\in \mathscr{Z})\subset\F^2$.}
	Let $\Gb=(\G_t)_{t\in[0,T]}$ be the natural extensions of $\Fb^N=(\F_t^N)_{t\in[0,T]}$ to $\Omega$, and also denote by $\mathbb{H}=(\mathcal{H}_t)_{t\in [0,T]}$ the natural extension of $(\mathcal{H}_t)_{t\in [0,T]}$.
	
	We also assume that $(\Omega,\F,\Fb,\Pb)$, $(\Omega,\F,\mathbb{H},\Pb)$ and $(\Omega,\F,\Gb,\Pb)$ satisfy the usual conditions without loss of generality. By construction, we can deduce that $N$ is $\Pb$-independent of $\mathcal{H}_t$ for all $t\in [0,T]$. Therefore, for any $\Fb$-adapted process $Y=(Y_t)_{t\in[0,T]}$, it holds that, $\Pb$-a.s.
	\begin{equation}\label{compatible}
		\Law(Y_t|N)=\Law(Y_t|\G_t),\quad \forall t\in[0,T],
	\end{equation}
	where the notation $\Law(\cdot|N)$ stands for the conditional distribution given the Poisson random measure $N$ under the probability measure $\Pb$.
	
	Consider measurable functions $b:\R^n\times\Pc_2(\R^n\times U)\times U\mapsto\R^n$, $\sigma:\R^n\times\Pc_2(\R^n\times U)\times U\mapsto\R^{n\times d}$, $\gamma:\R^n\times\Pc_2(\R^n\times U)\times U\times Z\mapsto\R^n$, $f:\R^n\times\Pc_2(\R^n\times U)\times U\mapsto\R^{n}$ and $g:\R^n\times\Pc_2(\R^n)\mapsto\R$ as the coefficients of the underlying controlled state process and the objective functional. We impose the following assumptions throughout the paper.
	
	\begin{ass}\label{ass} We make the following assumptions on model coefficients:
		\begin{itemize}
			\item[{\rm(A.1)}] $b(x,\rho,u),\sigma(x,\rho,u),\gamma(x,\rho,u,z),f(x,\rho,u),g(x,\mu)$ are continuous functions on their domains of definition; $b(x,\rho,u),\sigma(x,\rho,u),\gamma(x,\rho,u,z)$
			are uniformly Lipschitz continuous in $(x,\mu)$ in the sense that, there exists a constant $L>0$ independent of { $(u,z)\in U\times Z$} such that, for all $(x,\rho),(x',\rho')\in\R^n\times\Pc_2(\R^n\times U)$,
			\begin{equation*}
				\left|\phi(x',\rho',u)-\phi(x,\rho,u)\right|\leq L\left(|x-x'|+\|\rho-\rho'\|_{K,\rm FM}\right),
			\end{equation*}
			where $\phi=b(\cdot),\sigma(\cdot)$ or $\gamma(\cdot,z)$.
			
			\item[{\rm(A.2)}] the partial derivatives $\nabla_xb,\nabla_x\sigma,\nabla_x\gamma$ are uniformly bounded and continuous. $\nabla_xf,\nabla_xg$ have at most uniform linear growth in $x$ and continuous.
			
			\item[{\rm(A.3)}] there exists a constant $K>0$ such that, for all $(x,\rho,u)\in \R^n\times\Pc_2(\R^n\times U)\times U$,
			\begin{equation*}
				|\phi(x,\rho,u)|\leq K\left(1+|x|+M_2(\rho)\right),
			\end{equation*}
			with $\phi=b(\cdot),\sigma(\cdot)$, $f(\cdot)$ or $\gamma(\cdot,z)$ and $M_2(\rho):=(\int_{\R^n\times U} (|x|^2+\lv u\rv^2)\rho(\d x,\d u))^{\frac{1}{2}}$. Moreover, it holds that $
			\int_Z|\gamma(x,\rho,u,z)|^2\lambda(\d z)\leq L(1+|x|^2+M_2(\rho)^2)$.
			
			\item[{\rm(A.4)}] the coefficients $b,\sigma,\gamma$ have { linear functional derivative}s $\frac{\delta b}{\delta\rho},\frac{\delta \sigma}{\delta\rho},\frac{\delta \gamma}{\delta\rho}$, {respectively}. These { linear functional derivative}s have bounded continuous partial derivatives w.r.t. the state variables. That is, the L-derivatives of $b,\sigma,\gamma$ w.r.t. $\mu$ defined in \defref{Ldif} are bounded and continuous.
			
			\item[{\rm(A.5)}] the coefficients $f,g$ have { linear functional derivative}s $\frac{\delta f}{\delta\rho},\frac{\delta g}{\delta\mu}$, respectively. These { linear functional derivative}s have continuous partial derivatives w.r.t. the state variables, $\ie$ $f,g$ admit continuous $L$-derivative w.r.t. $\mu$. Furthermore, the growth condition holds for some constant $L>0$ independent of $(x,\rho,u)\in\R^n\times\Pc_2(\R^n\times U)\times U$,
			{\small\begin{align*}
					\left(\int_{\R^n\times U}|\pa_{\mu}f(x,\rho,u)(x',u')|^2\rho(\d x',\d u')\right)^{\frac12}+\left(\int_{\R^n}|\pa_{\mu}g(x,\mu)(x')|^2\mu(\d x')\right)^{\frac12}\leq L(1+|x|+M_2(\rho)).
			\end{align*}}
		\end{itemize}

	\end{ass}

	We are at the position to formulate the extended MFC problem with Poissonian common noise in both strict and relaxed senses. Let $\U$ be the set of $\Fb$-adapted process $\alpha=(\alpha_t)_{t\in[0,T]}$ taking values in $U$. We first introduce the extended MFC problem with Poissonian common noise in the strict control formulation, which is given by
	\begin{equation}\label{eq:svalue1}
		J(\alpha):=\E\left[\int_0^Tf(X_t,\Law((X_t,\alpha_t)|\G_t),\alpha_t)\d t+g(X_T,\Law(X_T|\G_T))
		\right]
	\end{equation}
	subject to the constraint:
	\begin{equation}\label{eq:seq1}
		\begin{cases}
			\displaystyle \d X_t =b(X_t,\Law((X_t,\alpha_t)|\G_t),\alpha_t)\d t+\sigma(X_t,\Law((X_t,\alpha_t)|\G_t),\alpha_t)\d W_t\\
			\displaystyle \quad\qquad+\int_Z\gamma(X_{t-},\Law((X_{t-},\alpha_{t-})|\G_{t-}),{\alpha_{t-}},z)\tN(\d t,\d z),\\
			\displaystyle \Law(X_0)=\mu\in{{\cal P}_p(\R^n).}
		\end{cases}
	\end{equation}
	Here,  {$X_0\in\mathcal{H}_0$, $p>2$ and} $\tN(\d z,\d t):=N(\d z,\d t)-\lambda(\d z)\d t$ is the compensated Poisson random measure. It is not difficult to show that problem \eqref{eq:svalue1}-\eqref{eq:seq1} is well-defined under \assref{ass}. An adapted process $\alpha^*\in\U$ is an optimal (strict) control of problem \eqref{eq:svalue1}-\eqref{eq:seq1} if it holds that $J(\alpha^*)=\inf_{\alpha\in\U}J(\alpha)$.

	
	Next, we consider the relaxed control formulation of the extended MFC problem. { Denote by $\mathcal{Q}$ the collection of finite measures on $[0,T]\times U$ with the first marginal measure equal to the Lebesgue measure, $\ie$, every element $q\in\mathcal{Q}$ can be identified with a measurable mapping $[0,T]\in t\mapsto q_t\in \Pc(U)$, defined up to $\as$ by $q(\d t,\d u)=q_t(\d u)\d t$.  In the sequel, we will always refer to the measurable mapping $q=(q_t)_{t\in [0,T]}$ to an element in $\mathcal{Q}$. Moreover, we endow $\mathcal{Q}$ with the weak convergence topology, $\ie$, $q^n\to q$ in $\mathcal{Q}$ if and only if for every $f\in C([0,T]\times U)$, the convergence $\int_0^T\int_Uf(t,u)q_t^n(\d u)\d t\to\int_0^T\int_Uf(t,u)q_t(\d u)$ holds.
		
		We call a $\mathcal{Q}$-valued $\RV$ $q=(q_t)_{t\in [0,T]}$ an admissible relaxed control if $q_t$ is $\mathbb{F}$-adapted (such adaptedness can be upgraded to progressive measurability due to Lemma 3.2 in Lacker \cite{Lacker}) and denote by $\Qb$ the collection of all admissible relaxed controls.  } Furthermore, we also define
	\begin{equation}\label{eq:deltaU}
		\delta(\mathscr{U}):=\left\{q=(q_t)_{t\in[0,T]}~\text{with}~q_t=\delta_{\alpha_t};~\alpha=(\alpha_t)_{t\in[0,T]}\in\mathscr{U}\right\}.
	\end{equation}
	Then, it automatically holds that $\delta(\mathscr{U})\subset\Qb$.
	
	Let $\tb,\ts,\tg,\tf$ be the respective extensions of the coefficients $b,\sigma,\gamma,f$ according to \eqref{eq:extensionh}. Then, under \assref{ass}, by applying \lemref{extensioneq}, \lemref{lem:condexpect}, \lemref{extensionLdif} and \lemref{extensiondif}, we can obtain the corresponding extensions $\tb,\ts,\tg,\tf$ satisfy the following properties:
	\begin{itemize}
		\item[{\rm(B.1)}] $\tb(x,\xi,u),\ts(x,\xi,u),\tg(x,\xi,u,z),\tf(x,\xi,u),g(x,\mu)$ are continuous functions on their domains of definition; $\tb(x,\xi,u),\ts(x,\xi,u),\tg(x,\xi,u,z)$
		are uniformly Lipschitz continuous in $(x,\xi)$ in the sense that, there is a constant $L>0$ independent of { $(u,z)\in U\times Z$} such that, for all $(x,\xi),(x',\xi')\in\R^n\times\Pc_2(\R^n\times \Pc(U))$,
		\begin{equation*}
			\left|\phi(x',\xi',u)-\phi(x,\xi,u)\right|\leq L(|x-x'|+d_{\rm KR}(\xi,\xi')),
		\end{equation*}
		where $\phi=\tb(\cdot),\ts(\cdot)$ or $\tg(\cdot,z)$.
		
		\item[{\rm(B.2)}] the partial derivatives $\nabla_x\tb,\nabla_x\ts,\nabla_x\tg,\nabla_x\tf,\nabla_x g$ are uniformly bounded and continuous.
		
		\item[{\rm(B.3)}] there exists a constant $K>0$ such that, for all $(x,\xi,u)\in \R^n\times\Pc_2(\R^n\times \Pc(U))\times U$,
		\begin{equation*}
			|\phi(x,\xi,u)|\leq K\left(1+|x|+M_2(\xi)\right)
		\end{equation*}
		with $\phi=\tb(\cdot),\ts(\cdot)$, $\tf(\cdot)$ or $\tg(\cdot,z)$ and $M_2(\xi):=(\int_{\R^n\times U} (|x|^2+\lv u\rv^2)\mathscr{P}(\xi)(\d x,\d u))^{\frac{1}{2}}$. Moreover, it holds that $\int_Z|\tg(x,\xi,u,z)|^2\lambda(\d z)\leq K(1+|x|^2+M_2(\xi)^2)$.
		
		\item[{\rm(B.4)}] the extensions $\tb,\ts,\tg$ have { linear functional derivative}s $\frac{\delta \tb}{\delta\xi},\frac{\delta \ts}{\delta\xi},\frac{\delta \tg}{\delta\xi}$, respectively. These { linear functional derivative}s have bounded continuous partial derivatives w.r.t. the state variables. That is, the partial $L$-derivatives of $\tb,\ts,\tg$ w.r.t. $\mu$ defined in \defref{Ldif} are bounded and continuous.
		
		\item[{\rm(B.5)}] the coefficients $\tf,g$ have { linear functional derivative}s $\frac{\delta f}{\delta\xi},\frac{\delta g}{\delta\mu}$, respectively. These { linear functional derivative}s have continuous partial derivatives w.r.t. the state variables, $\ie$ $\tf,g$ admit continuous $L$-derivative w.r.t. $\mu$. Furthermore, the growth condition holds for some constant $L>0$ independent of $(x,\xi,u)\in\R^n\times\Pc_2(\R^n\times \Pc(U))\times U$,
		{\small\begin{align*}
				\left(\int_{\R^n\times U}|\pa_{\mu}\tf(x,\xi,u)(x',u')|^2\xi(\d x',\d u')\right)^{\frac12}+\left(\int_{\R^n}|\pa_{\mu}g(x,\mu)(x')|^2\mu(\d x')\right)^{\frac12}\leq L(1+|x|+M_2(\xi)).
		\end{align*}}
		
		\item[{\rm(B.6)}] for any $(x,x',u,q)\in\R^n\times\R^n\times U\times\Pc(U)$, any square-integrable $\R^n\times U$-valued r.v. $(X,\alpha)$ on some probability space and a $\sigma$-field $\G$ on it, we have
		\begin{equation*}
			\begin{cases}
				\displaystyle \phi(x,\Law((X,\alpha)|\G),u)=\tilde\phi(x,\Law((X,\delta_{\alpha})|\G),u),\\
				\displaystyle \int_U\frac{\delta \phi}{\delta \rho}(x,\Law((X,\alpha)|\G),u)(x',u')q'(\d u')=\frac{\delta\tilde\phi}{\delta\xi}(x,\Law((X,\delta_{\alpha})|\G),u)(x',q'),
			\end{cases}
		\end{equation*}
		where $\phi=b(\cdot),\sigma(\cdot)$, $f(\cdot)$ or $\gamma(\cdot,z)$.
	\end{itemize}
	
	For any $q\in\Qb$, the state process under relaxed control is governed by, for $t\in[0,T]$,
	\begin{equation}\label{eq:req1state}
		\begin{cases}
			\displaystyle \d X_t=\int_U\tb (X_t,\Law((X_t,q_t)|\G_t),u)q_t(\d u)\d t+\int_U\ts(X_t,\Law((X_t,q_t)|\G_t),u)q_t(\d u) dW_t\\
			\displaystyle\qquad\quad+\int_U\int_Z \tg(X_{t-},\Law((X_{t-},q_{t-})|\G_{t-}),u,z)q_{t-}(\d u)\tN(\d t,\d z),\\
			\displaystyle\Law(X_0)=\mu.
		\end{cases}
	\end{equation}
	We aim to minimize the following cost functional over $q\in\Qb$ that
	\begin{equation}\label{eq:rvalue1}
		\J(q):=\E\left[\int_0^T\int_U\tf(X_t,\Law((X_t,q_t)|\G_t),u)q_t(\d u)\d t+g(X_T,\Law(X_T|\G_T))\right]\to\inf_{q\in\Qb}.
	\end{equation}
	
	A standard moment estimate under \assref{ass} yields $\E[\sup_{t\in[0,T]}|X_t|^2]<\infty$, which implies that the control problem \eqref{eq:req1state}-\eqref{eq:rvalue1} is well-defined under \assref{ass}. It is straightforward to see that the problem \eqref{eq:req1state}-\eqref{eq:rvalue1} will reduce to problem \equref{eq:svalue1}-\equref{eq:seq1} when $q\in\delta(\mathscr{U})$. If one can find a control $q^*\in\Qb$ such that
	$$\inf_{q\in\Qb}\J(q)=\J(q^*),$$
	we call $q^*$ an optimal relaxed extended MFC.

	\section{Stochastic Maximum Principle}\label{sec:SMP}
	
	In this section, we will first prove the SMP for the relaxed extended MFC problem using the first order variation and then derive an equivalence result of the value functions between the strict control formulation  and the relaxed control formulation. Building upon these two key results, we finally establish the SMP using the first order adjoint process for the original extended MFC problem with strict controls on general control domain that may not be convex.

	\subsection{First-order variation}\label{subsec:first-orderva}
	
	For the relaxed control problem, $\Pc(U)$ is compact and convex, and hence we can apply the first-order variation. That is, for any two relaxed controls $q,v\in\Qb$, we define a new relaxed control as follows $q^{\epsilon}:=q+\epsilon(v-q)\in\Qb$ for $\epsilon\in[0,1]$. Denote by $X^{\epsilon}=(X_t^{\epsilon})_{t\in[0,T]}$ the state process under the relaxed control $q^{\epsilon}$ according to the dynamics \eqref{eq:req1state}. We first have the following lemma, whose proof is standard and omitted.
	
	\begin{lemma}\label{statedif}
		Let $X=(X_t)_{t\in[0,T]}$ be the state process satisfying \eqref{eq:req1state}. Then, under \assref{ass}, as $\epsilon\downarrow0$,
		\begin{equation}\label{eq:dif}
			\sup_{t\in[0,T]}\E\left[|X_t^{\epsilon}-X_t|^2\right]=O(\epsilon^2).
		\end{equation}
	\end{lemma}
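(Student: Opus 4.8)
The plan is to derive the estimate \eqref{eq:dif} from a standard Gr\"onwall argument applied to the difference $X_t^\epsilon - X_t$, using the Lipschitz properties (B.1) (equivalently (A.1)) of the extended coefficients $\tb,\ts,\tg$ together with the Lipschitz continuity of the affine map $\ps$ (from \lemref{extensioneq}) to control the dependence on the conditional joint laws. First I would write the SDE satisfied by $\Delta_t := X_t^\epsilon - X_t$ by subtracting \eqref{eq:req1state} for $q^\epsilon$ from the same equation for $q$; the drift term becomes $\int_U [\tb(X_t^\epsilon,\xi_t^\epsilon,u)q_t^\epsilon(\d u) - \tb(X_t,\xi_t,u)q_t(\d u)]$, and similarly for the $\ts$ and $\tg$ terms, where $\xi_t^\epsilon := \Law((X_t^\epsilon,q_t^\epsilon)\mid\G_t)$ and $\xi_t := \Law((X_t,q_t)\mid\G_t)$.

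The key decomposition is to split each such increment into three pieces: (i) the change in the state argument, $\int_U[\tb(X_t^\epsilon,\xi_t^\epsilon,u) - \tb(X_t,\xi_t^\epsilon,u)]q_t^\epsilon(\d u)$, bounded by $L|\Delta_t|$ using (B.1); (ii) the change in the measure argument, $\int_U[\tb(X_t,\xi_t^\epsilon,u) - \tb(X_t,\xi_t,u)]q_t^\epsilon(\d u)$, bounded by $L\,d_{\rm KR}(\xi_t^\epsilon,\xi_t)$; and (iii) the change in the integrating measure, $\int_U \tb(X_t,\xi_t,u)(q_t^\epsilon - q_t)(\d u) = \epsilon\int_U \tb(X_t,\xi_t,u)(v_t - q_t)(\d u)$, which is $O(\epsilon)$ by the linear growth bound (B.3) and the compactness of $U$. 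For the measure term (ii) I would use that $d_{\rm KR}(\xi_t^\epsilon,\xi_t) \le (\E[|\Delta_t|^2 \mid \G_t])^{1/2} + \|q_t^\epsilon - q_t\|_{U,{\rm FM}}$-type bound; more precisely, since $(X_t^\epsilon,q_t^\epsilon)$ and $(X_t,q_t)$ are couplings of $\xi_t^\epsilon$ and $\xi_t$ over the same $\G_t$, one gets $\E[d_{\rm KR}(\xi_t^\epsilon,\xi_t)^2] \le C\,\E[|\Delta_t|^2] + C\epsilon^2$, the second term again coming from $\|q_t^\epsilon - q_t\|_{U,{\rm FM}} = \epsilon\|v_t - q_t\|_{U,{\rm FM}} \le \epsilon\cdot 2\sup_{u\in U}(1 + \|u\|)$. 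Then, applying It\^o's isometry and the Burkholder-Davis-Gundy inequality to the $\ts\,dW$ and $\tg\,\tN$ terms, taking expectations, and collecting terms yields
\[
\E[|\Delta_t|^2] \le C\int_0^t \E[|\Delta_s|^2]\,\d s + C\epsilon^2,
\]
and \eqref{eq:dif} follows from Gr\"onwall's lemma after taking the supremum over $t\in[0,T]$.

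The main obstacle I anticipate is handling the measure term (ii) rigorously: one must be careful that $d_{\rm KR}$ is computed over the common conditioning $\sigma$-field $\G_t$, so that a single coupling — namely the pair $(X_t^\epsilon,q_t^\epsilon),(X_t,q_t)$ sitting on the same probability space conditioned on $\G_t$ — can be used to bound the conditional Wasserstein/Kantorovich-Rubinstein distance, and that the bound involves $\E[|\Delta_t|^2]$ rather than its conditional version in a way compatible with Gr\"onwall. A secondary technical point is that the jump term $\tg$ requires the second part of (B.3), i.e. $\int_Z |\tg|^2\lambda(\d z) \le K(1 + |x|^2 + M_2(\xi)^2)$, to control the compensated Poisson integral via its predictable quadratic variation; this is routine but must be invoked explicitly. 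Since this lemma's proof is deferred to the appendix, I would present these steps at the level of detail above and relegate the elementary moment estimates and the explicit constants to the full computation.
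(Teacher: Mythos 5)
Your proposal is correct and follows essentially the same route as the paper's proof: the same three-way telescoping of each coefficient increment (state argument, measure argument, integrating measure), the same coupling bound $\E[d_{\rm KR}^2(\xi_s^{\epsilon},\xi_s)]\leq \E[|X_s^{\epsilon}-X_s|^2]+\epsilon^2\E[\lv v_s-q_s\rv_{U,{\rm FM}}^2]$ via the r.c.p.d. $Q_\omega$, and the same It\^o/martingale estimates followed by Gr\"onwall. The only immaterial difference is that the paper evaluates the integrating-measure increment at $(X_s^{\epsilon},\xi_s^{\epsilon})$ rather than at $(X_s,\xi_s)$.
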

	
	For $\omega\in\Omega$, let $Q_{\omega}$ be the r.c.p.d. of the probability measure $\Pb$ in \eqref{eq:probabspace} given $\G_T$. It is clear that, under $Q_{\omega}$, the law of $(X_t,q_t)$ coincides with $\Law((X_t,q_t)|\G_t)(\omega)$ for $\Pb$-a.s..(recall \equref{compatible}) We consider the copy measurable space $(\Omega',\F')\equiv(\Omega,\F)$. For any $\omega\in\Omega$, let us define by r.c.p.d. $\Pb':=Q_{\omega}$,
	which is a probability measure on $(\Omega',\F')$. We can define a copy random variable $X'$ on $(\Omega',\F')$ for every random variable $X$ on $(\Omega,\F)$ in the sense that $X'(\omega)=X(\omega)$ for all $\omega\in\Omega'=\Omega$. It is easy to see that $X'$ is indeed a random variable on $(\Omega',\F',\Pb')$, and moreover we have
	\begin{equation}\label{eq:lawtrans}
		\Law^{Q_{\omega}}(X')=\Law(X|\G_T)(\omega),
	\end{equation}
	where $\Law^{Q_{\omega}}(X')$ denotes the law of the random variable $X'$ under the probability measure $Q_{\omega}$. In the sequel, $\E'$ refers to the expectation in $(\Omega',\F')$ under the probability measure $\Pb'$. 
	\begin{remark}
		We also stress the next property of the expectation w.r.t. $\Pb'$. For random variables $X$ and its copy $X'$ respectively defined on $(\Omega,\F,\Pb)$ and $(\Omega',\F',\Pb')$, and let $F:\R^n\times\R^n\mapsto\R^n$ be measurable, the expectation is understood in the following sense:
		\begin{equation*}
			\E'\left[F(X(\omega),X')\right]=\E^{\Pb'}\left[F(x,X')\right]|_{x=X(\omega)}=\E^{Q_{\omega}}\left[F(x,X')\right]|_{x=X(\omega)} = \E[F(x, X)|\G_T]|_{x = X(\omega)}.
		\end{equation*}
		It is emphasized that the above equality should not be limited to Euclidean-valued random variables. For any random variables defined on the new space $(\Omega',\F',\Pb')$ taking values in any measurable space, the above equality still holds.
	\end{remark}
	Thus, we have the following result on the variational equation whose proof is reported in Appendix \ref{appendix}. Recall that $\xi_t=\Law((X_t,q_t)|\G_t)$ for $t\in[0,T]$ is defined in the proof of \lemref{statedif}.
	\begin{lemma}\label{dif}
		Let \assref{ass} hold. Then there exists a unique solution $V = (V_t)_{t \in [0,T]}$, taking values in $\R^n$, to the self-dependent variational equation
		\begin{equation}\label{eq:variation}
			\begin{aligned}
				\d V_t&=\{\lambda_{b, t}V_t+\beta_{b, t}+\E'[\eta_{b, t}V_t'+\zeta_{b, t}]\}\d t+\{{\lambda}_{\sigma, t}V_t+{\beta}_{\sigma, t}+\E'[{\eta}_{\sigma, t}V_t'+{\zeta}_{\sigma,t}]\}\d W_t\\
				&\quad+\int_Z\{{\lambda}_{\gamma, t-}(z)V_{t-}+{\beta}_{\gamma, t-}(z)+\E'[\eta_{\gamma, t-}(z)V_{t-}'+{\zeta}_{\gamma, t-}(z)]\}\tN(\d t,\d z)
			\end{aligned}
		\end{equation}
		with the initial condition $V_0 = 0$, {where $V'=(V_t')_{t\in[0,T]}$ is a copy of $V=(V_t)_{t\in[0,T]}$ on $(\Omega',\F')$. Here, the equation is called self-dependent because it involves the random process $V$ through its conditional copy $V'$. 
			Furthermore, it holds that
			\begin{equation*}
				\lim_{\epsilon\downarrow 0}\sup_{t\in[0,T]}\E\left[\left|\frac{X_t^{\epsilon}-X_t}{\epsilon}-V_t\right|^2\right]=0.
			\end{equation*}
			Here, for notational convenience, let $\phi \in \{b, \sigma, \gamma\}$ be a mapping from $\R^n\times\Pc_2(\R^n\times U)\times U$ to $E_\phi$, with $E_b = \R^n$, $E_\sigma = \R^{n \times d}$ and $E_\gamma = L^2(Z; \R^n)$. The coefficient processes are given by
			\begin{equation*}
				\begin{aligned}
					\lambda_{\phi, t}&=\int_U\pa_x\tilde\phi(X_t,\xi_t,u)q_t(\d u),~ \beta_{\phi, t}=\int_U\tilde\phi(X_t,\xi_t,u) v_t(\d u)-\int_U\tilde\phi(X_t,\xi_t,u)q_t(\d u),\\
					\eta_{\phi,t}&=\int_U\pa_x\left(\frac{\delta\tilde\phi}{\delta\xi}(X_t,\xi_t,u)\right)(X_t',q_t')q_t(\d u),~\zeta_{\phi,t}=\int_U\pa_q\left(\frac{\delta\tilde\phi}{\delta\xi}(X_t,\xi_t,u)\right)(X_t',q_t')(v_t'-q_t'){q_t(\d u)},
				\end{aligned}
			\end{equation*}
			where $(X', q')$ is a copy of $(X, q)$ on $(\Omega', \mathcal{F}')$.}
	\end{lemma}

	\begin{remark}\label{consistent}
		In the variational equation~\eqref{eq:variation}, {the coefficient process $\zeta_{\phi}$ admits the presentations:
			\begin{equation*}
				\displaystyle \zeta_{\phi,t}=\int_U\frac{\delta\tilde\phi}{\delta\xi}(X_t,\xi_t,u)(X_t',v_t')q_t(\d u)-\int_U\frac{\delta \tilde\phi}{\delta\xi}(X_t,\xi_t,u)(X_t',q_t')q_t(\d u),\\
			\end{equation*}
			which is consistent with $\beta_{\phi}$ according to \lemref{extensiondif}.}
	\end{remark}
	
	We also need the following auxiliary result whose proof is delegated to Appendix~\ref{appendix}.
	\begin{lemma}\label{valuedif}
		Let $q\in\Qb$ be an optimal relaxed control that  minimizes the cost functional $\J$ in \eqref{eq:rvalue1} over $\Qb$ and let $X=(X_t)_{t\in[0,T]}$ be its resulting state process satisfying the dynamics \eqref{eq:req1state}. For any $v\in\Qb$, we have
		\begin{equation}\label{eq:value_variation}
			\begin{aligned}
				0&\leq\E\left\{\pa_xg(X_T,\mu_T)\cdot V_T+\int_0^T\int_U\pa_x\tf(X_t,\xi_t,u)\cdot V_tq_t(\d u)\d t \right.\\
				&\qquad\qquad+\int_0^T\left(\int_U \tf(X_t,\xi_t,u)v_t(\d u)-\int_U\tf(X_t,\xi_t,u)q_t(\d u)\right)\d t\\
				&\quad+\E'\left[\pa_x\left(\frac{\delta g}{\delta\mu}(X_T,\mu_T)\right)(X_T')\cdot V_T'+\int_0^T\int_U\pa_x\left(\frac{\delta\tf}{\delta\xi}(X_t,\xi_t,u)\right)(X_t',q_t')\cdot V_t'q_t(\d u)\d t\right.\\
				&\qquad\qquad+\left.\left.\int_0^T\int_U\pa_q\left(\frac{\delta\tf}{\delta\xi}(X_t,\xi_t,u)\right)(X_t',q_t')(v_t'-q_t')q_t(\d u)\d t\right]\right\}.
			\end{aligned}
		\end{equation}
		Here, the process $V=(V_t)_{t\in[0,T]}$ is given in Lemma~\ref{dif}, and $\mu_T=\Law(X_T|\G_T)$ is the marginal law of $\xi_T$, and the copies $(X',q',V')$ of $(X,q,V)$ are constructed on $(\Omega',\F',\Pb')$.
	\end{lemma}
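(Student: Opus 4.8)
The plan is to differentiate the cost functional along the convex perturbation and use optimality. Since $q^\epsilon:=q+\epsilon(v-q)\in\Qb$ for every $\epsilon\in[0,1]$, optimality of $q$ gives $\J(q^\epsilon)-\J(q)\ge 0$, so it suffices to show that $\epsilon^{-1}\bigl(\J(q^\epsilon)-\J(q)\bigr)$ converges, as $\epsilon\downarrow0$, to the right-hand side of \eqref{eq:value_variation}. Writing $\xi_t^\epsilon:=\Law((X_t^\epsilon,q_t^\epsilon)|\G_t)$ and $\mu_T^\epsilon:=\Law(X_T^\epsilon|\G_T)$, I would split $\J(q^\epsilon)-\J(q)$ into the running-cost difference $\int_0^T\bigl[\int_U\tf(X_t^\epsilon,\xi_t^\epsilon,u)q_t^\epsilon(\d u)-\int_U\tf(X_t,\xi_t,u)q_t(\d u)\bigr]\d t$ and the terminal-cost difference $g(X_T^\epsilon,\mu_T^\epsilon)-g(X_T,\mu_T)$, and handle each by telescoping through three successive changes: the integrating measure $q_t\mapsto q_t^\epsilon$, the state slot $X_t\mapsto X_t^\epsilon$, and the measure slot $\xi_t\mapsto\xi_t^\epsilon$ (and, in the terminal term, $X_T\mapsto X_T^\epsilon$ followed by $\mu_T\mapsto\mu_T^\epsilon$).

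The first two changes are routine. The integrating-measure term equals $\epsilon\int_U\tf(X_t^\epsilon,\xi_t^\epsilon,u)(v_t-q_t)(\d u)$, whose normalized limit is $\int_U\tf(X_t,\xi_t,u)v_t(\d u)-\int_U\tf(X_t,\xi_t,u)q_t(\d u)$ by \lemref{statedif}, the continuity of $\tf$, and the compactness of $U$. The state-slot change is treated by the fundamental theorem of calculus in $x$, producing $\int_0^1\int_U\pa_x\tf\bigl(X_t+\theta(X_t^\epsilon-X_t),\xi_t^\epsilon,u\bigr)\cdot(X_t^\epsilon-X_t)\,q_t(\d u)\,\d\theta$ (and analogously $\pa_xg(\cdot)\cdot(X_T^\epsilon-X_T)$ for the terminal cost); dividing by $\epsilon$, invoking \lemref{dif}, i.e.\ $\epsilon^{-1}(X_t^\epsilon-X_t)\to V_t$ in $L^2$ uniformly in $t$, together with the boundedness and continuity of $\nabla_x\tf,\nabla_xg$ from (B.2), and using $\xi_t^\epsilon\to\xi_t$, the limits are $\int_U\pa_x\tf(X_t,\xi_t,u)\cdot V_t\,q_t(\d u)$ and $\pa_xg(X_T,\mu_T)\cdot V_T$.

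The substantive step is the measure-slot variation. Here I would pass to the copy space $(\Omega',\F')$ equipped with the r.c.p.d.\ $\Pb'=Q_\omega$ of $\Pb$ given $\G_T$, under which $\xi_t(\omega)$ is the law of the copy $(X_t',q_t')$; since $(q_t^\epsilon)'-q_t'=\epsilon(v_t'-q_t')$ and $(X_t^\epsilon)'=X_t'+\epsilon V_t'+o(\epsilon)$ in $L^2(\Pb')$ (for $\Pb$-a.e.\ $\omega$) by \lemref{dif}, a \emph{conditional} version of \lemref{diff} --- equivalently, the flow-of-measures chain rule of Buckdahn et al.\ applied under $Q_\omega$, splitting the joint perturbation of $(X_t',q_t')$ into its state and control components --- yields
\begin{align*}
\tf(X_t,\xi_t^\epsilon,u)-\tf(X_t,\xi_t,u) &= \epsilon\,\E'\Bigl[\pa_x\bigl(\tfrac{\delta\tf}{\delta\xi}(\xi_t)\bigr)(X_t,X_t',q_t',u)\cdot V_t'+\pa_q\bigl(\tfrac{\delta\tf}{\delta\xi}(\xi_t)\bigr)(X_t,X_t',q_t',u)(v_t'-q_t')\Bigr]+o(\epsilon),
\end{align*}
and the analogous identity for $g$ with $\frac{\delta g}{\delta\mu}$ in place of $\frac{\delta\tf}{\delta\xi}$. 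Here \lemref{extensiondif} is exactly what provides the decomposition: it guarantees that $\pa_q\frac{\delta\tf}{\delta\xi}$ exists (as the constant Fr\'echet derivative of the linear map $q'\mapsto\frac{\delta\tf}{\delta\xi}(\xi_t)(X_t,X_t',q',u)$), so the control component of the perturbation contributes precisely the $\pa_q\bigl(\frac{\delta\tf}{\delta\xi}(\xi_t)\bigr)(\cdot)(v_t'-q_t')$ term; cf.\ also \remref{consistent}. Integrating $u$ against $q_t$ and assembling the three changes reproduces the integrand of \eqref{eq:value_variation}.

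It remains to pass $\epsilon\downarrow0$ through $\E\bigl[\int_0^T(\cdot)\,\d t\bigr]$, which I would do by a dominated-convergence/Vitali argument: the $L^2$-bounds in \lemref{dif} control the $V$- and $V'$-type factors, the boundedness of $\nabla_x\tf,\nabla_xg$ and of the partial $L$-derivatives in (B.2)--(B.4) control the remaining factors, \lemref{statedif} and the compactness of $U$ control the integrating-measure term, and the $o(\epsilon)$ remainders are uniform in $t$ by \lemref{dif} and \lemref{statedif}. I expect the main obstacle to be exactly this measure-slot bookkeeping: justifying the conditional form of \lemref{diff} via the r.c.p.d., keeping straight which perturbation (state versus control, on $\Omega$ versus on $\Omega'$) produces which term, and controlling the remainders uniformly in $t\in[0,T]$ so that they survive division by $\epsilon$ and the subsequent time integration --- rather than any single estimate.
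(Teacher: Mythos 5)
Your proposal is correct and follows essentially the same route as the paper: start from $0\leq\J(q^{\epsilon})-\J(q)$, expand the running and terminal cost differences to first order in $\epsilon$ using \lemref{diff}, \lemref{dif} and \assref{ass}, then divide by $\epsilon$ and let $\epsilon\downarrow0$. The paper's proof is simply a compressed version of your telescoping decomposition, so no further comparison is needed.
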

	
	\subsection{Hamiltonian and first-order adjoint process}
	
	In this subsection, we introduce the so-called relaxed Hamiltonian and the first-order adjoint process. Let us first define the relaxed Hamiltonian $\mathcal{H}:\R^n\times\Pc(U)\times\Pc_2(\R^n\times\Pc(U))\times\R^n\times\R^{n\times d}\times L^2((Z,\mathscr{Z},\lambda);\R^n)\mapsto\R$ by
	\begin{align}\label{RHamiltonian}
		\mathcal{H}(x,q,\xi,p,P,K)&:=\int_U\tb(x,\xi,u)q(\d u)\cdot p+\tr\left(\int_U\ts(x,\xi,u)q(\d u)P^{\T}\right)+\int_U\tf(x,\xi,u)q(\d u)\nonumber\\
		&\quad +\int_Z\int_U\tg(x,\xi,u,z)q(\d u)\cdot K(z)\lambda(\d z),
	\end{align}
	and the relaxed $\delta$-Hamiltonian $\delta\mathcal{H}:\R^n\times\Pc(U)\times\Pc_2(\R^n\times\Pc(U))\times\R^n\times\Pc(U)\times\R^n\times\R^{n\times d}\times L^2((Z,\mathscr{Z},\lambda);\R^n)\mapsto\R$ that
	\begin{align}\label{deltaRdHamiltonian}
		&\delta\mathcal{H}(x,q,\xi,x',q',p,P,K):=\int_U\frac{\delta\tb}{\delta\xi}(x,\xi,u)(x',q')q(\d u)\cdot p+\tr\left(\int_U\frac{\delta\ts}{\delta\xi}(x,\xi,u)(x',q')q(\d u)P^{\T}\right)\nonumber\\
		&\qquad+\int_U\frac{\delta\tf}{\delta\xi}(x,\xi,u)(x',q')q(\d u)+\int_Z\int_U\frac{\delta\tg}{\delta\xi}(x,\xi,u,z)(x',q')q(\d u)\cdot K(z)\lambda(\d z).
	\end{align}
	Then, for any $(x,q,\xi,x',q',p,P,K)\in\R^n\times\Pc(U)\times\Pc_2(\R^n\times\Pc(U))\times\R^n\times\Pc(U)\times\R^n\times\R^{n\times d}\times L^2((Z,\mathscr{Z},\lambda);\R^n)$, it holds that
	{
		\begin{equation}\label{eq:deltaH=deriH}
			\delta\mathcal{H}(x,q,\xi,x',q',p,P,K)=\frac{\delta\mathcal{H}}{\delta\xi}(x,q,\xi,p,P,K)(x',q').
	\end{equation}}
	{ Here, the definition of the linear functional derivative $\frac{\delta\mathcal{H}}{\delta\xi}$ is given in \defref{def:derivativelinear}}
	
	The adjoint process is defined as a (triplet) $\Fb$-adapted process $(\tp,\tP,\tK)=(\tp_t,\tP_t,\tK_t)_{t\in[0,T]}$ taking values in $\R^n\times\R^{n\times d}\times L^2((Z,\mathscr{Z},\lambda);\R^n)$ that satisfies the integrability condition
	\begin{equation}\label{eq:cond}
		\E\left[ \sup_{t\in[0,T]}|\tp_t|^2+\int_0^T|\tP_t|^2\d t+\int_0^T\int_Z|\tK_t|^2\lambda(\d z)\d t
		\right]<\infty,
	\end{equation}
	and the BSDE:
	\begin{equation}\label{RBSDE}
		\begin{cases}
			\displaystyle  \d \tp_t=-\left\{\pa_x\mathcal{H}(X_t,q_t,\xi_t,\tp_t,\tP_t,\tK_t)+\E'\left[\pa_{x'}\delta\mathcal{H}(X_t',q_t',\xi_t,X_t,q_t,\tp_t',\tP_t',\tK_t')\right] \right\}\d t\\
			\displaystyle\qquad\quad+\tP_t\d W_t+\int_Z\tK_{t-}\tilde{N}(\d t,\d z),\\
			\displaystyle \tp_T=\pa_xg(X_T,\mu_T)+\E'\left[\pa_x\left(\frac{\delta g}{\delta\mu}(X_T',\mu_T)\right)(X_T)\right].
		\end{cases}
	\end{equation}
	Here, recall that $X'$ is a copy of the random variable $X$ constructed on the probability space $(\Omega',\F',\Pb')$ as before. The same fashion applies to the notations $\tp'$, $\tP'$ and $\tK'$. In addition, $W=(W_t)_{t\in[0,T]}$ is a $d$-dimensional Brownian motion and  $\tilde{N}(\d t,\d z)$ is a compensated Poisson random measure.
	\begin{remark}
		It can be observed that the BSDE \eqref{RBSDE} is a linear BSDE. Hence, with the help of \assref{ass}, for any relaxed control $q\in\Qb$ and the corresponding state process $X=(X_t)_{t\in[0,T]}$ satisfying the dynamics \eqref{eq:req1state}, the BSDE \eqref{RBSDE} always admits an $\Fb$-adapted and $\R^n\times\R^{n\times d}\times L^2((Z,\mathscr{Z},\lambda);\R^n)$-valued solution $(\tp,\tP,\tK)=(\tp_t,\tP_t,\tK_t)_{t\in[0,T]}$ (c.f. Hao~\cite{TaoHao}).
	\end{remark}

	\subsection{Necessary and sufficient conditions for optimal relaxed control}
	
	For the probability space $(\Omega,\F,\Pb)$,  we recall the definition of its copy $(\Omega',\F',\Pb')$ given in Section \ref{subsec:first-orderva}. We then define a unique probability measure $\tilde{\Pb}$ on $(\tilde{\Omega}:=\Omega\times\Omega',\tilde{\F}:=\F\otimes\F')$ by
	\begin{equation*}
		\tilde{\Pb}(A\times B):=\int_{A\times B}\Pb'(\d \omega')\Pb(\d\omega),\quad \forall A\in\F,B\in\F'.
	\end{equation*}
	Note that, for any $\omega\in\Omega$, $\Pb':=Q_{\omega}$ is the r.c.p.d., which is a probability measure on $(\Omega',\F')$. Denote by $R$ the second marginal law of $\tilde{\Pb}$. Then, for all $A\in\F$,
	\begin{equation}\label{eq:Req}
		R(A)=\int_{\Omega\times A}\Pb'(\d\omega')\Pb(\d\omega)=\int_{\Omega\times A}Q_{\omega}(\d \omega')\Pb(\d \omega)=\int_{\Omega}Q_{\omega}(A)\Pb(\d\omega)=\Pb(A).
	\end{equation}
	We can thus write $\tilde{\Pb}$ in the disintegration form thanks to Radon-Nikodym theorem $\tilde{\Pb}(\d\omega,\d\omega')=Q'_{\omega'}(\d\omega)R(\d\omega')$ where $Q'_{
		\omega'}(\cdot)$ is the Radon-Nikodym derivative of $\tilde{\Pb}$ w.r.t. $R$ given $\omega'$.
	Recall that $(\Omega',\F')=(\Omega,\F)$ is Polish, and is hence countably determined, we can conclude that $Q_{\omega}=Q'_{\omega}$ on $\F$, $\Pb$-a.s.. Then the next result follows.
	
	\begin{theorem}[Necessary Condition]\label{RNEC}
		Let $q\in\Qb$ be an optimal relaxed control attaining the minimum of the cost function $J$ in \eqref{eq:svalue1} over $\Qb$ and $X=(X_t)_{t\in[0,T]}$ be the associated controlled state process satisfying the dynamics \eqref{eq:req1state}. Then, there exists an $\Fb$-adapted solution $(\tp,\tP,\tK)=(\tp_t,\tP_t,\tK_t)_{t\in[0,T]}$ to the BSDE \eqref{RBSDE}. Furthermore, for the relaxed Hamiltonian $\mathcal{H}$ defined by \eqref{RHamiltonian} and  the relaxed $\delta$-Hamiltonian $\delta\mathcal{H}$ defined by \eqref{deltaRdHamiltonian}, we have, $\d t\times\d\Pb$-a.s.
		\begin{equation}\label{eq:RSMP}
			\begin{aligned}
				&\mathcal{H}(X_t,q_t,\xi_t,\tp_t,\tP_t,\tK_t)+\E'\left[\delta\mathcal{H}(X_t',q_t',\xi_t,X_t,q_t,\tp_t',\tP_t',\tK_t')\right]\\
				&\quad\quad\leq\mathcal{H}(X_t,v,\xi_t,\tp_t,\tP_t,\tK_t)+\E'\left[\delta\mathcal{H}(X_t',q_t',\xi_t,X_t,v,\tp_t',\tP_t',\tK_t')\right],~~\forall v\in\Pc(U).
			\end{aligned}
		\end{equation}
		Here, $X'_t,q_t',\tilde{p}_t',\tilde{P}_t',\tilde{K}_t'$ with $t\in[0,T]$ are the corresponding copies defined on the  space $(\Omega',\F')$.
	\end{theorem}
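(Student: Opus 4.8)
The plan is to convert the first-order variational inequality of \lemref{valuedif} into the pointwise Hamiltonian minimum condition \eqref{eq:RSMP}, by pairing the variational process $V$ with the adjoint triple $(\tp,\tP,\tK)$ and then localizing in $(t,\omega)$. First I would fix an arbitrary $v\in\Qb$, set $q^{\epsilon}:=q+\epsilon(v-q)$, and let $V=(V_t)_{t\in[0,T]}$ be the solution of the variational equation \eqref{eq:variation} supplied by \lemref{dif}. Then I would apply It\^o's product rule for c\`{a}dl\`{a}g semimartingales to $t\mapsto\tp_t\cdot V_t$, inserting \eqref{eq:variation} for $\d V_t$ and the adjoint BSDE \eqref{RBSDE} for $\d\tp_t$, and take expectations. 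By the integrability bound \eqref{eq:cond}, \assref{ass}, \lemref{statedif} and \lemref{dif}, the stochastic integrals against $W$, $\tN$ and $M$ are genuine martingales with zero mean, and the quadratic covariation of $\tp$ and $V$ receives no contribution from $M$ since $M$ is orthogonal to $W$ and $N$, so only drift terms survive. Reading $\pa_x\mathcal{H}$ and $\pa_{x'}\delta\mathcal{H}$ off from \eqref{RHamiltonian}--\eqref{deltaRdHamiltonian}, the terms in $V_t\cdot\d\tp_t$ involving $\lambda_t^{\T}\tp_t$, $\hat\lambda_t$ paired with $\tP_t$ and $\tilde\lambda_t(z)$ paired with $\tK_t(z)$ cancel exactly the corresponding $\lambda_tV_t$, $\hat\lambda_tV_t$, $\tilde\lambda_t(z)V_t$ contributions coming from $\tp_t\cdot\d V_t$ and from the bracket; after this, $\E[\tp_T\cdot V_T]$ equals the expectation of the ``source'' terms built from $(\beta_t,\hat\beta_t,\tilde\beta_t(z))$ and $(\zeta_t,\hat\zeta_t,\tilde\zeta_t(z))$ paired with $(\tp_t,\tP_t,\tK_t)$, the terms $\E'[\eta_tV_t'],\E'[\hat\eta_tV_t'],\E'[\tilde\eta_t(z)V_t']$ likewise paired, minus $V_t\cdot\E'[\pa_{x'}\delta\mathcal{H}(X_t',q_t',\xi_t,X_t,q_t,\tp_t',\tP_t',\tK_t')]$, and minus $V_t\cdot\int_U\pa_x\tf(X_t,\xi_t,u)q_t(\d u)$.

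\emph{The $\E/\E'$ interchange and reassembly.} The key observation is the symmetry of $\tilde{\Pb}$ on $\Omega\times\Omega'$ constructed just before the statement, i.e.\ $R=\Pb$ and $Q_\omega=Q'_\omega$ $\Pb$-a.s., which, together with the $\G_T$-measurability of $\xi_t$ (so $\xi_t(\omega)=\xi_t(\omega')$ $\tilde{\Pb}$-a.s.), gives $\E\E'[G(\omega,\omega')]=\E\E'[G(\omega',\omega)]$ for integrable $G$. Applying this swap, the $\E'[\eta_tV_t']$-, $\E'[\hat\eta_tV_t']$-, $\E'[\tilde\eta_t(z)V_t']$-terms cancel precisely the $\frac{\delta\tb}{\delta\xi}$-, $\frac{\delta\ts}{\delta\xi}$-, $\frac{\delta\tg}{\delta\xi}$-pieces of $V_t\cdot\E'[\pa_{x'}\delta\mathcal{H}(\cdots)]$; the residual $\tf$-terms cancel exactly the $\pa_x\tf$- and $\pa_x(\delta\tf/\delta\xi)$-terms of \eqref{eq:value_variation} (the latter again after a swap, using that $\pa_q(\delta\tf/\delta\xi)$ is affine, cf.\ \remref{consistent}); and, using $\tp_T=\pa_xg(X_T,\mu_T)+\E'[\pa_x(\frac{\delta g}{\delta\mu}(\mu_T))(X_T,X_T')]$, the left side $\E[\tp_T\cdot V_T]$ matches the two $g$-terms of \eqref{eq:value_variation}. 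Invoking \remref{consistent} and \eqref{eq:deltaH=deriH}, the $\beta$-source terms plus the free running-cost variation $\int_U\tf(X_t,\xi_t,u)(v_t-q_t)(\d u)$ recombine into $\mathcal{H}(X_t,v_t,\xi_t,\tp_t,\tP_t,\tK_t)-\mathcal{H}(X_t,q_t,\xi_t,\tp_t,\tP_t,\tK_t)$, while the $\zeta$-source terms plus the $\pa_q(\delta\tf/\delta\xi)$-variation recombine (after one more swap) into the $\E'$ of the $\delta\mathcal{H}$-increment, leaving, for every $v\in\Qb$,
\begin{align*}
0\le\E\int_0^T\Big(&\mathcal{H}(X_t,v_t,\xi_t,\tp_t,\tP_t,\tK_t)-\mathcal{H}(X_t,q_t,\xi_t,\tp_t,\tP_t,\tK_t)\\
&+\E'\big[\delta\mathcal{H}(X_t',q_t',\xi_t,X_t,v_t,\tp_t',\tP_t',\tK_t')-\delta\mathcal{H}(X_t',q_t',\xi_t,X_t,q_t,\tp_t',\tP_t',\tK_t')\big]\Big)\d t.
\end{align*}

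\emph{Localization.} Fix a deterministic $v^0\in\Pc(U)$. For $0\le t<t+h\le T$ and $C\in\F_t$, let $v^{t,h,C}\in\Qb$ be the relaxed control equal to $v^0$ on $C\times[t,t+h]$ and equal to $q$ elsewhere; it is admissible because $U$ is compact. Plugging $v^{t,h,C}$ into the above inequality, dividing by $h$, sending $h\downarrow0$ along Lebesgue points of the $\d s$-integrand, and letting $C$ range over $\F_t$, we obtain \eqref{eq:RSMP} with $v=v^0$ off a $\d t\times\d\Pb$-null set. Since $(\Pc(U),d_{U,{\rm FM}})$ is a compact, hence separable, Polish space and both sides of \eqref{eq:RSMP} are continuous in $v$ by (B.1) and (B.3), running $v^0$ over a countable dense set and passing to the limit produces a single common null set outside which \eqref{eq:RSMP} holds for all $v\in\Pc(U)$, which is the claim.

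\emph{Main obstacle.} The delicate point is the reassembly: one must organize the It\^o expansion together with the iterated $\E/\E'$ interchanges so that every term linear in $V_t$ or $V_t'$ cancels and the survivors recombine \emph{exactly} into the increments of $\mathcal{H}$ and $\delta\mathcal{H}$, with the correct primed/unprimed placement of the state, control and adjoint arguments. This bookkeeping is intricate because of the many copied processes, the asymmetry between the genuine state slot and the linear-derivative slot of $\delta\mathcal{H}$, and the need to hold the $\G_T$-measurable, hence copy-invariant, conditional law $\xi_t$ fixed through the swaps; by contrast, the integrability needed to justify Fubini and the vanishing of the martingale parts follows routinely from \assref{ass}, \eqref{eq:cond}, \lemref{statedif} and \lemref{dif}.
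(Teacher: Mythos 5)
Your proposal is correct and follows essentially the same route as the paper's proof: It\^o's product rule applied to $\tp_t\cdot V_t$, the $\E/\E'$ interchange justified by $R=\Pb$ and $Q_\omega=Q'_\omega$, and combination with the variational inequality of \lemref{valuedif} and \remref{consistent} to reach the integrated Hamiltonian inequality over $v\in\Qb$. The only difference is that you spell out the final localization (perturbing $q$ on $C\times[t,t+h)$ with $C\in\F_t$, passing to Lebesgue points, and using separability of $\Pc(U)$ with continuity in $v$), which the paper compresses into the phrase ``the desired result then follows from the arbitrariness of $v\in\Qb$.''
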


	
	\noindent{\it Proof.}\quad
	Let $\E^{R}$, $\E^{Q_{\omega}}$ and $\E^{Q_{\omega'}}$ represent expectation operators under probability measures $R$, $Q_{\omega}$ and $Q_{\omega'}$ introduced above, respectively (recall that $\E$ is the expectation w.r.t. $\Pb$ but here we use $\E^{\Pb}$ to emphasize which probability measure we are taking expectations with respect to). Then, it follows from Fubini's theorem that
	\begin{equation*}
		\begin{aligned}
			\E^{\Pb}  \left[\tp_T\cdot V_T\right]&=\E^{\Pb}\left[ \pa_xg(X_T,\mu_T){\cdot V_T}+\E^{Q_{\omega}}\left[\pa_x\left(\frac{\delta g}{\delta\mu}(X_T',\mu_T)\right)(X_T)\cdot V_T\right]\right]\\
			&=\E^{\Pb}\left[ \pa_xg(X_T,\mu_T)\cdot V_T\right]+\E^{\Pb}\left[\E^{Q_{\omega}}\left[\pa_x\left(\frac{\delta g}{\delta\mu}(X_T(\omega'),\mu_T)\right)(X_T(\omega))\cdot V_T(\omega)\right]\right]\\
			&=\E^{\Pb}\left[\pa_xg(X_T,\mu_T)\cdot V_T\right]+\E^R\left[\E^{Q_{\omega'}}\left[\pa_x\left(\frac{\delta g}{\delta\mu}(X_T(\omega'),\mu_T)\right)(X_T(\omega))\cdot V_T(\omega)\right]\right]\\
			&=\E^{\Pb}\left[\pa_xg(X_T,\mu_T)\cdot V_T\right]+\E^{\Pb}\left[\E^{Q_{\omega}}\left[\pa_x\left(\frac{\delta g}{\delta\mu}(X_T(\omega),\mu_T)\right)(X_T(\omega'))\cdot V_T(\omega')\right]\right]\\
			&=\E^{\Pb}\left[\pa_xg(X_T,\mu_T)\cdot V_T\right]+\E^{\Pb}\left[\E'\left[\pa_x\left(\frac{\delta g}{\delta\mu}(X_T,\mu_T)\right)(X_T')\cdot V_T'\right]\right].
		\end{aligned}
	\end{equation*}
	Here, the process $V=(V_t)_{t\in[0,T]}$ satisfies \eqref{eq:variation} provided in \ref{dif}. The fourth equality in the above display results from the representation \equref{eq:Req} and the fact $Q_{\omega}=Q'_{\omega}$, on $\F$, $\Pb$-a.s. and $\Pb=R$ on $\F$. On the other hand, we have from It\^{o}'s formula 
	\begin{equation*}
		\begin{aligned}
			\E\left[\tp_T\cdot V_T\right]&=\E\Bigg[\int_0^T \left\{\tp_t\cdot \{\alpha_tV_t+\beta_t+\E'[\eta_tV_t'+\zeta_t]\}-\{\pa_x \mathcal{H}+\E'[\pa_{x'}\delta\mathcal{H}]\}\cdot V_t\right.\\
			&\quad+\tr\left[\tP_t^{\T}(\hat{\alpha}_tV_t+\hat{\beta}_t+\E'[\hat{\eta}_tV_t'+\hat{\zeta}_t])\right]\\
			&\quad+\left.\int_Z\tK_{t-}\cdot\{\tilde{\alpha}_{t-}(z)V_{t-}+\tilde{\beta}_{t-}(z)+\E'[\tilde{\eta}_{t-}(z)V_{t-}'+\tilde{\zeta}_{t-}(z)]\}\lambda(\d z)\right\}\d t\Bigg].
		\end{aligned}
	\end{equation*}
	By combining the above two equalities with \equref{RHamiltonian} and \eqref{RBSDE}, inserting them into \equref{eq:value_variation} and recalling \ref{consistent}, we obtain for all $v\in \Qb$,
	\begin{equation*}
		\begin{aligned}
			&\E\left[\int_0^T\left\{\mathcal{H}(X_t,q_t,\xi_t,\tp_t,\tP_t,\tK_t)+\E'[\delta\mathcal{H}(X_t',q_t',\xi_t,X_t,q_t,\tp_t',\tP_t',\tK_t')]\right\}\d t\right]\\
			&\quad\leq\E\left[\int_0^T\left\{\mathcal{H}(X_t,v_t,\xi_t,\tp_t,\tP_t,\tK_t)+\E'[\delta\mathcal{H}(X_t',q_t',\xi_t,X_t,v_t,\tp_t',\tP_t',\tK_t')]\right\}\d t\right].
		\end{aligned}
	\end{equation*}
	The desired result then follows from the arbitrariness of $v\in\Qb$.\hfill$\Box$
	
	Before introducing our sufficient condition, let us follow Acciaio et al. \cite{Carmona1} to give a definition of so-called $L$-convexity (note that here we are considering the joint law):
	\begin{definition}[$L$-convexity]\label{L-convex} A continuously differentiable function $l:\R^n\times\Pc_2(\R^n\times \Pc(U))\mapsto\R$ is said to be $L$-convex, if for every $(x_1,\xi_1),(x_2,\xi_2)\in \R^n\times\Pc_2(\R^n\times \Pc(U))$, it holds that
		\begin{equation*}
			\begin{aligned}
				&l(x_2,\xi_2)-l(x_1,\xi_1)\\
				&\geq \pa_x l(x_1,\xi_1)\cdot(x_2-x_1)+\E^{\mathbb{P}}\left[\pa_{\mu}l(x_1,\xi_1)(X_1,q_1)\cdot (X_2-X_1)+\pa_{\nu}l(x_1,\xi_1)(X_2,q_2)(q_2-q_1)
				\right],
			\end{aligned}
		\end{equation*}
		where $(X_1,q_1)$ and $(X_2,q_2)$ are $\R^n\times\Pc(U)$-valued r.v.s defined on some probability space $(\Omega,\F,\Pb)$ such that their distributions coincide with $\xi_1$ and $\xi_2$, respectively.
	\end{definition}

	\begin{remark}
		Note that there are several notions of ``convexity". For example, by McCann~\cite{McCann} and Villani~\cite{Villani}, we have
		\begin{itemize}
			\item[{\rm(i)}] A function $l:\R^n\times\Pc_2(\R^n\times \Pc(U))\mapsto\R$ is said to be convex in the classical sense if for any $(x_1,\xi_1),(x_2,\xi_2)\in\R^n\times\Pc_2(\R^n\times\Pc(U))$ and $\theta\in[0,1]$, we have
			\begin{equation}\label{calssical_convex}
				l(\theta x_1+(1-\theta)x_2,\theta\xi_1+(1-\theta)\xi_2)\leq \theta l(x_1,\xi_1)+(1-\theta)l(x_2,\xi_2).
			\end{equation}
			\item[{\rm(ii)}] A function $l:\R^n\times\Pc_2(\R^n\times\Pc(U))\mapsto\R$ is said to be convex in the displacement convex sense if for any $x_1,x_2\in\R^n$, $\R^n\times\Pc(U)$-valued random variables $(X_1,q_1), (X_2,q_2)$ on some probability space and $\theta\in[0,1]$, we have
			\begin{equation}\label{displacement_convex}
				\begin{aligned}
					& l(\theta x_1+(1-\theta)x_2,\Law(\theta X_1+(1-\theta)X_2,\theta q_1+(1-\theta)q_2))\\
					&\qquad\leq \theta l(x_1,\Law(X_1,q_1))+(1-\theta)l(x_2,\Law(X_2,q_2)).
				\end{aligned}
			\end{equation}
		\end{itemize}
		The above two definitions do not require the $L$-differentiability of the mapping $l$. However, we choose to use the $L$-convexity in \defref{L-convex} because it is most suitable in our setting. {By using Proposition 5.79 in Carmona and Delarue \cite{Carmona}, if $l$ is $L$-differentiable, then $L$-convexity and displacement convexity are equivalent in the space of measures on Euclidean space. }
	\end{remark}

	\begin{theorem}[Sufficient Condition]\label{RSUF}
		Let $q\in\Qb$ be a relaxed control, $X=(X_t)_{t\in[0,T]}$ be the resulting controlled state process, and $(\tp,\tP,\tK)=(\tp_t,\tP_t,\tK_t)_{t\in[0,T]}$ be the adjoint process satisfying BSDE~\eqref{RBSDE}. Assume that the Hamiltonian $\mathcal{H}$ is $L$-convex in $(x,\xi)\in\R^n\times\Pc_2(\R^n\times\Pc(U))$ and $g$ is $L$-convex in $(x,\mu)\in\R^n\times\Pc_2(\R^n)$. Then, this $q\in\Qb$ is an optimal relaxed control provided that the inequality \eqref{eq:RSMP} holds.
	\end{theorem}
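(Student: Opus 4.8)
The plan is to verify optimality of $q$ directly by comparing $\J(q')$ with $\J(q)$ for an arbitrary competitor $q'\in\Qb$, exploiting the $L$-convexity of $\mathcal{H}$ and $g$ together with the maximum (here minimum) condition \eqref{eq:RSMP} and an It\^o expansion of $\tp_t\cdot(X_t'-X_t)$, where $X'=(X_t')_{t\in[0,T]}$ denotes the state process controlled by $q'$ (not to be confused with copies on $(\Omega',\F')$; I will rename the competitor state to avoid clash, say $\Xh$). First I would write
\begin{align*}
\J(q')-\J(q)=\E\Big[g(\Xh_T,\hat\mu_T)-g(X_T,\mu_T)+\int_0^T\Big(\int_U\tf(\Xh_t,\hat\xi_t,u)q_t'(\d u)-\int_U\tf(X_t,\xi_t,u)q_t(\d u)\Big)\d t\Big],
\end{align*}
and apply $L$-convexity of $g$ to bound $g(\Xh_T,\hat\mu_T)-g(X_T,\mu_T)$ from below by $\pa_xg(X_T,\mu_T)\cdot(\Xh_T-X_T)+\E'[\pa_x(\frac{\delta g}{\delta\mu}(\mu_T))(X_T,X_T')\cdot(\Xh_T'-X_T')]$, i.e.\ by $\E'[\text{terminal adjoint}]\cdot(\Xh_T-X_T)$ using the Fubini/copy manipulation already carried out in the proof of \thmref{RNEC}.

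Next I would compute, via It\^o's formula applied to $t\mapsto\tp_t\cdot(\Xh_t-X_t)$ and using the BSDE \eqref{RBSDE} for $\d\tp_t$ and the dynamics \eqref{eq:req1state} for $\d\Xh_t$ and $\d X_t$, an expression for $\E[\tp_T\cdot(\Xh_T-X_T)]$ as a time integral involving $\pa_x\mathcal{H}$, $\pa_{x'}\delta\mathcal{H}$, the drift/diffusion/jump differences, and the cross terms $\tP_t$ and $\tK_t$ paired with the diffusion and jump coefficient differences. The martingale part $\d M_t$ drops out since $M$ is independent of $(\Fb^W,\Gb)$ and the stochastic integrals against $W$ and $\tN$ are true martingales under \assref{ass} and \eqref{eq:cond}. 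Combining this with the lower bound from $g$, the difference $\J(q')-\J(q)$ becomes bounded below by $\E\int_0^T[\,\cdots\,]\d t$ where the bracket is exactly the difference
\begin{align*}
\Big\{\mathcal{H}(\Xh_t,q_t',\hat\xi_t,\tp_t,\tP_t,\tK_t)-\mathcal{H}(X_t,q_t,\xi_t,\tp_t,\tP_t,\tK_t)\Big\}-\pa_x\mathcal{H}(X_t,q_t,\xi_t,\tp_t,\tP_t,\tK_t)\cdot(\Xh_t-X_t)
\end{align*}
minus the corresponding $\E'$-terms coming from $\pa_{x'}\delta\mathcal{H}$ and the measure-derivative directions, modulo the usual bookkeeping of the joint-law arguments. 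The $L$-convexity of $\mathcal{H}$ in $(x,\xi)$ then shows this bracket is bounded below by the quantity $\mathcal{H}(\Xh_t,q_t',\hat\xi_t,\ldots)-\mathcal{H}(\Xh_t,q_t',\xi_t,\ldots)$ plus a remainder; more precisely, applying $L$-convexity with base point $(X_t,\xi_t)$ and target $(\Xh_t,\hat\xi_t)$ yields $\mathcal{H}(\Xh_t,\hat\xi_t,\ldots)\ge\mathcal{H}(X_t,\xi_t,\ldots)+\pa_x\mathcal{H}\cdot(\Xh_t-X_t)+\E'[\pa_{x'}\delta\mathcal{H}\cdot(\Xh_t'-X_t')+\pa_{q'}\delta\mathcal{H}(\cdots)(q_t'-q_t)]$, where $\mathcal{H}(x,\xi,\ldots)$ is shorthand for $\mathcal{H}(x,q_t',\xi,\ldots)$ and I keep the control at $q_t'$; after substituting this and re-collecting the $\E'$ terms against the BSDE drift, one is left precisely with $\E\int_0^T[\mathcal{H}(\Xh_t,q_t',\xi_t,\ldots)+\E'[\delta\mathcal{H}(X_t',q_t',\xi_t,\Xh_t,q_t',\ldots)]-\mathcal{H}(X_t,q_t,\xi_t,\ldots)-\E'[\delta\mathcal{H}(X_t',q_t',\xi_t,X_t,q_t,\ldots)]]\d t$, wait — one more application of the minimum condition \eqref{eq:RSMP} (with $v=q_t'$, pointwise in $\omega$, integrating $q_t'(\d u)$) bounds the $q$-terms below by the $q_t'$-terms evaluated at the \emph{optimal} state arguments, and the remaining difference in the state argument inside $\mathcal{H}(\cdot,q_t',\xi_t,\ldots)$ versus $\mathcal{H}(\Xh_t,q_t',\xi_t,\ldots)$ is absorbed by the first-order terms already accounted for — so the whole lower bound collapses to $0$, giving $\J(q')\ge\J(q)$.

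The main obstacle I anticipate is the careful handling of the \emph{joint-law} measure-derivative directions: unlike the classical state-only case, $L$-convexity of $\mathcal{H}$ in $(x,\xi)\in\R^n\times\Pc_2(\R^n\times\Pc(U))$ produces three first-order terms (the $\pa_x$, $\pa_\nu$/$\pa_q$ in the measure's $\Pc(U)$-component, and the $\pa_\mu$ in the $\R^n$-component), and one must check that the $\pa_q\delta\mathcal{H}$-type term matches exactly the $\zeta,\hat\zeta,\tilde\zeta$-style contributions — i.e.\ the directional derivative in the control-measure direction $q_t'-q_t$ — that appear when one It\^o-expands with the variational \emph{dynamics} written at the competitor versus optimal control. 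The consistency noted in \remref{consistent} and \lemref{extensiondif} (that $\pa_q\frac{\delta\tilde h}{\delta\xi}(\xi)(x,q)(\cdot)=\frac{\delta\tilde h}{\delta\xi}(\xi)(x,\cdot)$, with the linear-derivative increment $\int_U\tb(x,\xi,u)(q_t'-q_t)(\d u)$ coinciding with the directional derivative) is what makes this bookkeeping close; I would invoke it to identify the two expressions. A secondary technical point is justifying all the Fubini swaps and the passage between $\E'$ (r.c.p.d.) and the product-space expectation, but this is exactly the computation already executed in the proof of \thmref{RNEC}, so I would simply reuse \eqref{eq:Req} and the identity $Q_\omega=Q_\omega'$ $\Pb$-a.s., together with the integrability \eqref{eq:cond} and the moment bound $\E[\sup_t|X_t|^2]<\infty$ to make the stochastic integrals genuine martingales.
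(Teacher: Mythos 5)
Your proposal follows essentially the same route as the paper's proof: lower-bound the terminal cost difference via $L$-convexity of $g$ to obtain $\E[\tp_T\cdot(X_T^v-X_T)]$, expand that pairing by It\^o's formula using the BSDE \eqref{RBSDE} and the state dynamics, then combine the $L$-convexity of $\mathcal{H}$ in $(x,\xi)$ (with linearity in the control measure) and the minimum condition \eqref{eq:RSMP} applied at $v=q_t'$ to conclude $\J(q')-\J(q)\ge 0$. The technical points you flag --- matching the $\pa_q\frac{\delta\tilde h}{\delta\xi}$ directional terms via \remref{consistent}/\lemref{extensiondif} and reusing the r.c.p.d./Fubini manipulations from \thmref{RNEC} --- are exactly the ones the paper's proof relies on, so the plan is sound.
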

	
	\noindent{\it Proof.}\quad For any $v\in\Qb$, denote by $X^v=(X^v_t)_{t\in[0,T]}$ the resulting state process. By the convexity of $g$ on $\R^n\times{\cal P}_2(\R^n)$, we have
	\begin{align*}
		&\E\left[g(X_T^v,\mu_T^v)-g(X_T,\mu_T)\right]\\
		&\geq\E\left[\pa_xg(X_T,\mu_T)\cdot(X_T^v-X_T^q)+\E'\left[\pa_x\left(\frac{\delta g}{\delta\mu}(X_T,\mu_T)\right)(X_T')\cdot(X_T^{v'}-X_T')\right]\right]\\
		&=\E\left[\pa_xg(X_T,\mu_T)\cdot(X_T^v-X_T)+\E'\left[\pa_x\left(\frac{\delta g}{\delta\mu}(X_T',\mu_T)\right)(X_T)\cdot(X_T^v-X_T)\right] \right]\nonumber\\
		&=\E\left[\tp_T\cdot(X_T^v-X_T)\right].
	\end{align*}
	Applying It\^{o}'s formula to $\tp_t\cdot(X_t^v-X_t^q)$, we arrive at
	\begin{align*}
		&\E\left[\tp_T\cdot(X_T^v-X_T)\right]=\E\Bigg[\int_0^T\left[\tp_t\cdot\left(\int_U\tb(X_t^v,\xi_t^v,u)v_t(\d u)-\int_U\tb(X_t,\xi_t,u)q_t(\d u)\right)\right.\\
		&\qquad-(X_t^v-X_t)\cdot\left(\pa_x\mathcal{H}(X_t,q_t,\xi_t,\tp_t,\tP_t,\tK_t)+\E'[\pa_{x'}\delta\mathcal{H}(X_t',q_t',\xi_t,X_t,q_t,\tp_t',\tP_t',\tK_t')]\right)\\
		&\qquad+\tr\left(\tP_t^{\T}\left(\int_U\ts(X_t^v,\xi_t^v,u)v_t(\d u)-\int_U\ts(X_t,\xi_t,u)q_t(\d u)\right)\right)\\
		&\qquad+\left.\tK_t\cdot\int_Z\left(\int_U\tg(X_t^v,\xi_t^v,u,z)v_t(\d u)-\int_U\tg(X_t,\xi_t,u,z)q_t(\d u)\right)\lambda(\d z)\right]\d t\Bigg],
	\end{align*}
	where we recall $\xi_t^v=\Law((X_t^v,v_t)|\G_{t})$ for $t\in[0,T]$. Hence, we have
	\begin{align*}
		&J(v)-J(q)\geq \E\Big[\mathcal{H}(X_t^v,v_t,\xi_t^v,\tp_t,\tP_t,\tK_t)-\mathcal{H}(X_t,q_t,\xi_t,\tp_t,\tP_t,\tK_t)\\
		&\quad-\left.\pa_x\mathcal{H}(X_t,q_t,\xi_t,\tp_t,\tP_t,\tK_t)\cdot(X_t^v-X_t)-\E'[\pa_{x'}\delta\mathcal{H}(X_t',q_t',\xi_t,X_t,q_t,\tp_t',\tP_t',\tK_t')\cdot(X_t^v-X_t)]\right]\\
		&~=\E\left[\mathcal{H}(X_t^v,v_t,\xi_t^v,\tp_t,\tP_t,K_t)-\mathcal{H}(X_t,q_t,\xi_t,\tp_t,\tP_t,\tK_t)\right.\\
		&\quad-\left.\pa_x\mathcal{H}(X_t,q_t,\xi_t,\tp_t,\tP_t,\tK_t)\cdot(X_t^v-X_t)-\E'[\pa_{x'}\delta\mathcal{H}(X_t,q_t,\xi_t,X_t',q_t',\tp_t,\tP_t,\tK_t)\cdot(X_t^{v'}-X_t')]\right].
	\end{align*}
	Note that $\mathcal{H}$ is $L$-convex in $(x,\xi)$ and is linear in $q$. Using the Clarke generalized gradient of $\mathcal{H}$, we then have from  \lemref{extensiondif} 
	\begin{align*}
		& J(v)-J(q)\geq\E\left[\mathcal{H}(X_t^v,v_t,\xi_t^v,\tp_t,\tP_t,K_t)-\mathcal{H}(X_t,q_t,\xi_t,\tp_t,\tP_t,\tK_t)\right.\\
		&\quad-\left.\pa_x\mathcal{H}(X_t,q_t,\xi_t,\tp_t,\tP_t,\tK_t)\cdot(X_t^v-X_t)-\E'[\pa_{x'}\delta\mathcal{H}(X_t,q_t,\xi_t,X_t',q_t',\tp_t,\tP_t,\tK_t)\cdot(X_t^{v'}-X_t')]\right]\\
		&\quad \geq \E\left[\pa_x\mathcal{H}(X_t,q_t,\xi_t,\tp_t,\tP_t,\tK_t)\cdot(X_t^v-X_t)-\pa_x\mathcal{H}(X_t,q_t,\xi_t,\tp_t,\tP_t,\tK_t)\cdot(X_t^v-X_t)\right.\\
		&\qquad\qquad+\E'[\pa_{x'}\delta\mathcal{H}(X_t,q_t,\xi_t,X_t',q_t',\tp_t,\tP_t,\tK_t)\cdot(X_t^{v'}-X_t')]\\
		&\qquad\qquad-\E'[\pa_{x'}\delta\mathcal{H}(X_t,q_t,\xi_t,X_t',q_t',\tp_t,\tP_t,\tK_t)\cdot(X_t^{v'}-X_t')]\\
		&\qquad\qquad+\mathcal{H}(X_t,v_t,\xi_t,\tp_t,\tP_t,\tK_t)-\mathcal{H}(X_t,q_t,\xi_t,\tp_t,\tP_t,\tK_t)\\
		&\qquad\qquad\left.+\delta\mathcal{H}(X_t',q_t',\xi_t,X_t,v,\tp_t',\tP_t',\tK_t')-\delta\mathcal{H}(X_t',q_t',\xi_t,X_t,q_t,\tp_t',\tP_t',\tK_t')\right]\\
		&\quad=\E\left[\mathcal{H}(X_t,v_t,\xi_t,\tp_t,\tP_t,\tK_t)-\mathcal{H}(X_t,q_t,\xi_t,\tp_t,\tP_t,\tK_t)\right.\\
		&\qquad\qquad\left.+\delta\mathcal{H}(X_t',q_t',\xi_t,X_t,v,\tp_t',\tP_t',\tK_t')-\delta\mathcal{H}(X_t',q_t',\xi_t,X_t,q_t,\tp_t',\tP_t',\tK_t')\right]\geq 0.
	\end{align*}
	The last inequality follows from \equref{eq:RSMP}.  As $v\in\Qb$ is arbitrary, $q\in\Qb$ is an optimal relaxed control. \hfill$\Box$
	
	{ \subsection{Equivalence between relaxed control and strict control formulations}
		This subsection is devoted to establishing the value function equivalence between the strict and relaxed control formulations. Once the equivalence is in force, the stochastic maximum principle for relaxed controls can be transferred to the framework of strict controls. Let us first give the next result whose proof is similar to that of Lemma 16 in Bahlali \cite{Bahlali}.	
		\begin{lemma}\label{equvalent}
			Let $\alpha=(\alpha_t)_{t\in[0,T]}\in\mathscr{U}$. Then, $\alpha$ minimizes the objective functional \eqref{eq:svalue1} of the strict MFC problem over $\U$ if and only if $q=(\delta_{\alpha_t})_{t\in[0,T]}$ minimizes the objective functional \eqref{eq:rvalue1} of the relaxed MFC problem over $\delta(\U)$.
		\end{lemma}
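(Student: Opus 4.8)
The plan is to establish the single identity $\J(\delta_\alpha)=J(\alpha)$ for every $\alpha\in\U$, where $\delta_\alpha:=(\delta_{\alpha_t})_{t\in[0,T]}$, and then to combine it with the elementary fact that $\alpha\mapsto\delta_\alpha$ maps $\U$ onto $\delta(\U)$ by the very definition \equref{eq:deltaU}. Granting these two facts, the equivalence of minimizers is immediate: for any $\alpha\in\U$,
\begin{align*}
J(\alpha)=\inf_{\beta\in\U}J(\beta)\iff\J(\delta_\alpha)=\inf_{\beta\in\U}\J(\delta_\beta)=\inf_{q\in\delta(\U)}\J(q),
\end{align*}
which is exactly the assertion of the lemma, read in both directions.

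To prove the identity $\J(\delta_\alpha)=J(\alpha)$, I would first recall (as already observed just before the lemma) that when the relaxed control has the Dirac form $q_t=\delta_{\alpha_t}$, the relaxed dynamics \equref{eq:req1state} reduces to the strict dynamics \equref{eq:seq1}. Indeed, integrating against $\delta_{\alpha_t}$ collapses $\int_U\tb(x,\Law((X_t,\delta_{\alpha_t})|\G_t),u)\,\delta_{\alpha_t}(\d u)=\tb(x,\Law((X_t,\delta_{\alpha_t})|\G_t),\alpha_t)$, which by property (B.5) (equivalently \lemref{lem:condexpect}, applied $\omega$-by-$\omega$ with the square-integrable pair $(X_t,\alpha_t)$ and the $\sigma$-field $\G_t$) equals $b(x,\Law((X_t,\alpha_t)|\G_t),\alpha_t)$; the same collapse holds for $\ts$ and for $\tg(\cdot,z)$. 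Hence the two state processes are indistinguishable, and I denote the common process by $X$. Applying (B.5) once more to $\tf$, the running cost of the relaxed problem equals $\int_U\tf(X_t,\Law((X_t,\delta_{\alpha_t})|\G_t),u)\,\delta_{\alpha_t}(\d u)=f(X_t,\Law((X_t,\alpha_t)|\G_t),\alpha_t)$, while the terminal term $g(X_T,\Law(X_T|\G_T))$ is literally the same object in both formulations; taking $\E$ then yields $\J(\delta_\alpha)=J(\alpha)$.

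The substance of the lemma is thus entirely in the bookkeeping behind this identity, and the main point requiring care is that the collapse supplied by (B.5) must hold $\d t\times\d\Pb$-a.e. on $[0,T]\times\Omega$ \emph{simultaneously}, so that the relaxed and strict SDEs genuinely share the same coefficients and pathwise uniqueness of the McKean--Vlasov equation (valid under \assref{ass}) delivers indistinguishability of the two controlled state processes. One should also dispatch the minor points that $\delta_\alpha$ has c\`{a}dl\`{a}g trajectories in the weak topology of $\Pc(U)$ and satisfies the square-integrability constraint defining $\Qb$, so that $\J(\delta_\alpha)$ is well defined; these are routine. With the identity and the surjectivity of $\alpha\mapsto\delta_\alpha$ onto $\delta(\U)$ in hand, the conclusion follows at once, and the overall structure parallels Lemma~16 of Bahlali~\cite{Bahlali}.
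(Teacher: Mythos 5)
Your proposal is correct and follows the same route the paper intends: the paper omits the proof by reference to Lemma 16 of Bahlali, whose content is precisely your identity $\J(\delta_\alpha)=J(\alpha)$ (already noted after \eqref{eq:rvalue1} and justified by property (B.5)/\lemref{lem:condexpect} together with pathwise uniqueness of the state equation) combined with the surjectivity of $\alpha\mapsto\delta_\alpha$ onto $\delta(\U)$. Nothing is missing.
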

		The following lemma shows that the conditional joint law of state–control given common noise is intrinsic and its representation does not depend on the choice of the underlying probability space.
		\begin{lemma}\label{law_representation}
			Let $q\in\Qb$ and $X=(X_t)_{t\in [0,T]}$ be the controlled process. We denote by $\xi=\Law((X,q)|N)$ and let $\xi_t$ be the $t$-marginal of $\xi$, $\ie$, $\xi_t=\xi\circ\pi_t^{-1}$, where the projection mapping $\pi_t:D([0,T];\R^n)\times\mathcal{Q}\to \R^n\times \Pc(U)$ is defined by $\pi_t(x,q)=(x(t),q_t)$ for $(x,q)\in D([0,T];\R^n)\times\mathcal{Q}$. Moreover, suppose that $(\hat X,\hat q,\hat W,\hat N,\hat\xi)$ is defined on some probability space $(\hat\Omega,\hat\F,\hat P)$ and has the same joint law as $(X,q,W,N,\xi)$. Then $\hat P(\hat\xi=\Law^{\hat{P}}((\hat X,\hat q)|\hat N)=1$ and $\hat X$ solves the following controlled SDE on $(\hat\Omega,\hat\F,\hat P)$:
			\begin{align}\label{SDE_hat}
				\d \hat X_t&=\int_U\tb(\hat X_t,\hat\xi_t,u)\hat q_t(\d u)\d t+\int_U\ts(\hat X_t,\hat\xi_t,u)\hat q_t(\d u)\d \hat W_t\nonumber\\
				&\quad+\int_U\int_Z\tg(\hat X_t,\hat\xi_t,u,z)\hat q_t(\d u)\tilde{\hat N}(\d t,\d z),
			\end{align}
			where $\hat\xi_t=\hat\xi\circ\pi_t^{-1}$. Furthermore, if $q$ has continuous paths $\ie$ $q\in\Qb_c$ (see the definition of $\Qb_c$ below), then it holds that $\hat P(\hat\xi_t=\Law^{\hat P}((\hat X_t,\hat q_t)|\hat \G_t),\forall t\in [0,T])=1$, where $\hat\G_t=\sigma(\hat N((0,s]\times A),s\leq t,A\in\mathscr{Z})$.
		\end{lemma}
		
		We provide below the so-called \textit{chattering lemma} without proof (c.f. Lemma 4.2 in Mezerdi~\cite{Mohamed}), which will be used later.
		\begin{lemma}[Chattering Lemma]\label{chattering}
			Let $q\in \Qb$. Then, there exists a sequence of adapted process $(\alpha^n)_{n\geq1}$ taking values in $U$ such that the sequence of random measures $\delta_{\alpha_t^n}(\d u)\d t$ $\as$ converges to $q$ as $n\to\infty$ in $\mathcal{Q}$, $\ie$ for any continuous function $f:[0,T]\times U\to\R$, it holds that, a.s.
			\begin{equation*}
				\lim_{n\to\infty}\int_0^Tf(t,\alpha_t^n)\d t=\int_0^T\int_Uf(t,u)q_t(\d u) \d t.
			\end{equation*}
		\end{lemma}
		
		We impose the following additional assumption such that all coefficients satisfy the separable condition. In particular, they are (partially) affine in the conditional joint law of the state and the strict control.
		\begin{ass}\label{extra_ass}
			The coefficients $b,\sigma,\gamma,f$ satisfy the separable condition. To be more precise, there exist Borel mappings $(b_1,\sigma_1,f_1):\R^n\times\Pc_2(\R^n)\times U\to\R^n\times\R^{n\times d}\times\R$, $\gamma_1:\R^n\times\Pc_2(\R^n)\times U\times Z\to\R^n$, $(b_2,\sigma_2,f_2):\R^n\times\Pc_2(\R^n)\times U\times\R^n\times U\to\R^n\times\R^{n\times d}\times\R$ and $\gamma_2:\R^n\times\Pc_2(\R^n)\times U\times Z\times\R^n\times U\to\R^n$ such that, for every $(x,\rho,u,z)\times\R^n\times\Pc_2(\R^n\times U)\times U\times Z$,
			\begin{align*}
				(b,\sigma,f)(x,\rho,u)&=(b_1,\sigma_1,f_1)(x,\mu,u)+\int_{\R^n\times U}(b_2,\sigma_2,f_2)(x,\mu,u)(x',u')\rho(\d x',\d u'),\\
				\gamma(x,\rho,u,z)&=\gamma_1(x,\mu,u,z)+\int_{\R^n\times U}\gamma_2(x,\mu,u,z)(x',u')\rho(\d x',\d u'),
			\end{align*}
			where $\mu\in\Pc_2(\R^n)$ denotes the first marginal distribution of $\rho$. Moreover, $(b_1,\sigma_1,\gamma_1)$, $(b_2,\sigma_2,\gamma_2)$ are uniformly Lipschitz continuous in $(x,\mu,x')\in\Pc_2(\R^n)\times\R^n$.
			$f_1$, $f_2$ and $g$ are locally Lipschitz in the sense that there exists a constant $L>0$ independent of $(x_i,\mu_i,u,x_i',u')\in \R^n\Pc_2(\R^n)\times U\times\R^n\times U$ such that
			{\small\begin{align}\label{local_Lip}
					&|f_1(x_1,\mu_1,u)-f_1(x_2,\mu_2,u)|+|f_2(x_1,\mu_1,u)(x_1',u')-f_2(x_2,\mu_2,u)(x_2',u)|+|g(x_1,\mu_1)-g(x_2,\mu_2)|\nonumber\\
					&\leq L\left(1+|x_1|+|x_2|+|x_1'|+|x_2'|+M_2(\mu)+M_2(\mu'))(|x_1-x_2|+|x_1'-x_2'|+\mathcal{W}_{1,\R^n}(\mu_1,\mu_2)\right),
			\end{align}}
			$(b_2,\sigma_2,\gamma_2,f_2)$ are uniformly continuous in $u'\in U$ and have at most quadratic growth in $(x',u')\in \R^n\times U$.
		\end{ass}
		The next lemma allows us to restrict our attention to continuous relaxed controls, $\ie$, the class $\Qb_c$ consisting of all relaxed controls $q=(q_t)_{t\in[0,T]}\in\Qb$ whose sample paths are continuous outside a $\Pb$-null set. The proof is deferred to Appendix~\ref{appendix}.
		\begin{lemma}\label{continuous_control}
			Let \assref{extra_ass} hold. For any $q\in \Qb$, there exists a sequence continuous relaxed controls $\{q^n=(q_t^n)_{t\in [0,T]};~n\in\mathbb{N}\}\subset\Qb_c$ converging to $q$ strongly in the sense that $q_t^n(A)\to q_t(A)$ for all $A\in\mathcal{B}(U)$ and $t\in [0,T]$ $\as$. Moreover, let $X=(X_t)_{t\in [0,T]}$ and $X^n=(X_t^n)_{t\in [0,T]}$ be the McKean-Vlasov dynamics \eqref{eq:req1state} controlled by $q$ and $q^n$, respectively, then 
			\begin{align*}
				\lim_{n\to\infty}\E^{\Pb}\left[\sup_{t\in [0,T]}|X_t-X_t^n|^2\right]=0.
			\end{align*}
			Consequently, we have the value function equivalence:
			\begin{align}\label{value_continuity}
				\inf_{q\in\Qb}\J(q)=\inf_{q\in\Qb_c}\J(q).
			\end{align}
		\end{lemma}
		Similar to Lemma 3.3 in Ma and Yong \cite{Majin}, we have the following weak convergence result, whose proof is delegated to Appendix~\ref{appendix}.
		\begin{lemma}\label{L2con}
			Let $q\in\Qb_c$ and $q^n$ be a sequence of relaxed controls such that $q^n\to q$ in $\mathcal{Q}$ $\as$. Let $X=(X_t)_{t\in [0,T]}$ be the  Mckean-Vlasov dynamics \eqref{eq:req1state} controlled by $q$ and set $\xi_t(\omega)=\Law((X_t,q_t)|\G_t)$. Furthermore, we define
			\begin{align}\label{eq:req1auxiliarytes}
				M_{\tb}^n(t,\omega)&:=\int_U\tb(X_t(\omega),\xi_t(\omega),u)q_t^n(\omega,\d u),~M_{\tb}(t,\omega):=\int_U\tb(X_t(\omega),\xi_t(\omega),u)q(\omega,\d u);\nonumber\\
				M_{\ts}^n(t,\omega)&:=\int_U\ts(X_t(\omega),\xi_t(\omega),u)q_t^n(\omega,\d u),~M_{\ts}(t,\omega):=\int_U\ts(X_t(\omega),\xi_t(\omega),u)q(\omega,\d u);\\
				M_{\tg}^n(t,\omega,z)&:=\int_U\tg(X_t(\omega),\xi_t(\omega),u,z)q_t^n(\omega,\d u),~M_{\tg}(t,\omega,z):=\int_U\tg(X_t(\omega),\xi_t(\omega),u,z)q(\omega,\d u).\nonumber
			\end{align}
			Then, for all $z\in Z$, as $n\to\infty$,
			\begin{equation*}
				M_{\tb}^n\overset{w}{\to} M_{\tb},~~M_{\ts}^n\overset{w}{\to} M_{\ts},~~M_{\tg}^n(\cdot,z)\overset{w}{\to} M_{\tg}(\cdot,z),~~\text{in}~L^2([0,T]\times\Omega).
			\end{equation*}
			{Or equivalenly, it holds for every test function $\psi\in L^2([0,T]\times\Omega)$ that\begin{align*}
					\E\left[\int_0^TM_{\tilde \phi}^n(t)\psi(t)\d t\right]\to\E\left[\int_0^TM_{\tilde \phi}(t)\psi(t)\d t\right],~n\to\infty, ~\forall\phi\in\{b,\sigma,\gamma(\cdot,z)\}.
			\end{align*}}
		\end{lemma}

		Now, we can show an equivalence result on the value functions between two formulations in the next result whose proof is deferred to Appendix~\ref{appendix}.
		\begin{prop}\label{valueeq}
			Let \assref{ass} and \assref{extra_ass} hold. The (MFC) value function defined by \eqref{eq:svalue1} in the strict control formulation coincides with the one defined by \eqref{eq:rvalue1} in the relaxed control formulation, i.e., it holds that
			\begin{equation}\label{eq:eqvalue}
				\inf_{\alpha\in\U}J(\alpha)=\inf_{q\in\Qb}\J(q).
			\end{equation}
		\end{prop}
		{
			\begin{remark}[Outline of the proof of Proposition~3.16]
				The proof follows the route developed in Ma and Yong~\cite{Majin}, but it requires technical efforts to handle the convergence of the joint law of state and control in our context, which calls for \assref{extra_ass}. The key steps are as follows:
				\begin{enumerate}
					\item By \assref{extra_ass} and \lemref{continuous_control}, it suffices to consider relaxed controls with continuous trajectories, denoted by $q \in \mathcal{Q}_c$.  For any such $q$, the Chattering Lemma (\lemref{chattering}) yields a sequence of strict controls $\alpha^n$ converging weakly to $q$ in $\mathcal{Q}$.
					\item We consider the sequence of state process $X^n$, control $\alpha^n$, the joint distribution of state and control $\xi^n$, as well as the idiosyncratic noise $W^n$ and common noises $N^n$. It is shown that the collection of their joint laws is tight.  Prokhorov's theorem then provides a weakly convergent subsequence.  Using the Skorokhod representation (\lemref{law_representation}), we upgrade this convergence to almost sure convergence on a new probability space $(\hat\Omega, \hat{\mathcal{F}}, \hat{\mathbb{P}})$.
					\item On this new space, \lemref{L2con} guarantees convergence $M_{\widetilde\phi}^n\to M_{\widetilde\phi}$ weakly in $L^2([0,T]\times\hat\Omega)$ for $\phi\in\{b,\sigma,\gamma(\cdot,z)\}$.  Mazur's theorem implies that a sequence of convex combinations of $\{M_{\widetilde\phi}^n\}$ converges \emph{strongly} in $L^2$ to $M_{\widetilde\phi}$.
					\item By exploiting \assref{extra_ass}, we can pass to the limit in the martingale problem and identify the limiting probability measure as the solution corresponding to $q$ thanks to the weak uniqueness. The continuity of the cost functional with respect to the weak topology then yields the desired value equivalence.
				\end{enumerate}
		\end{remark}}

	}
	\subsection{Necessary and sufficient condition for optimal strict control}
	
	Define the Hamiltonian $H:\R^n\times U\times\Pc_2(\R^n\times U)\times\R^n\times\R^{n\times d}\times L^2((Z,\mathscr{Z},\lambda);\R^n)\mapsto\R$ in the strict control formulation by
	\begin{equation}\label{SHamiltonian}
		\begin{aligned}
			H(x,u,\rho,p,P,K)&:=b(x,\rho,u)\cdot p+\tr\left(\sigma(x,\rho,u)P^{\T}\right)+f(x,\rho,u)\\
			&\quad+\int_Z\gamma(x,\rho,u,z)\cdot K(z)\lambda(\d z),
		\end{aligned}
	\end{equation}
	and consider the strict $\delta$-Hamiltonian $\delta H:\R^n\times U\times\Pc_2(\R^n\times U)\times\R^n\times U\times\R^n\times\R^{n\times d}\times L^2((Z,\mathscr{Z},\lambda);\R^n)\mapsto \R$ defined by
	\begin{equation}\label{SdHamiltonian}
		\begin{aligned}
			\delta H(x,u,\rho,x',u',p,P,K)&=\frac{\delta b}{\delta\rho}(x,\rho,u)(x',u')\cdot p+\tr\left(\frac{\delta\sigma}{\delta\rho}(x,\rho,u)(x',u')P^{\T}\right)\\
			&\quad+\frac{\delta f}{\delta\rho}(x,\rho,u)(x',u')+\int_Z\frac{\delta \gamma}{\delta\rho}(x,\rho,u,z)(x',u')\cdot K(z)\lambda(\d z).
		\end{aligned}
	\end{equation}
	It then holds that $\delta H(x,u,\rho,x',u',p,P,K)=\frac{\delta H}{\delta\rho}(x,u,\rho,p,P,K)(x',u')$.
	
	Consider the triplet $(p,P,K)=(p_t,P_t,K_t)_{t\in[0,T]}$ as an $\R^n\times\R^{n\times d}\times L^2((Z,\mathscr{Z},\lambda);\R^n)$-valued $\Fb$-adapted process that satisfies
	\begin{equation}\label{eq:scond}
		\E\left[ \sup_{t\in[0,T]}|p_t|^2+\int_0^T|P_t|^2\d t+\int_0^T\int_Z|K_t|^2\lambda(\d z)\d t\right]<\infty,
	\end{equation}
	and solves the BSDE under the strict control 
	\begin{equation}\label{SBSDE}
		\begin{cases}
			\displaystyle \d p_t=-\left\{\pa_xH(X_t,\alpha_t,\rho_t,p_t,P_t,K_t)+\E'\left[\pa_{x'}\delta H(X_t',\alpha_t',\rho_t,X_t,\alpha_t,p_t',P_t',K_t')\right] \right\}\d t\\
			\displaystyle \qquad\qquad+P_t\d W_t+\int_ZK_{t-}\tilde{N}(\d t,\d z),\\
			\displaystyle p_T=\pa_xg(X_T,\mu_T)+\E'\left[\pa_x\left(\frac{\delta g}{\delta\mu}(X_T',\mu_T)\right)(X_T)\right],
		\end{cases}
	\end{equation}
	where $\rho_t=\Law((X_t,\alpha_t)|\G_t)$ for $t\in[0,T]$. Then, we call the above triplet $(p,P,K)=(p_t,P_t,K_t)_{t\in[0,T]}$ a solution to BSDE~\eqref{SBSDE}.
	
	It is stressed here that, when $q=(\delta_{\alpha_t})_{t\in[0,T]}\in\delta(\U)$ with $\alpha=(\alpha_t)_{t\in[0,T]}\in\mathscr{U}$, the dynamics in the relaxed control formulation is the same as the one in the strict control formulation, and hence the two state processes must be indistinguishable. Moreover, it holds that, for $\tilde P$-$\as~\tilde\omega=(\omega,\omega')\in\tilde\Omega$, $\forall (x,x',p,P,K)\in\R^n\times\R^n\times\R^n\times\R^{n\times d}\times L^2((Z,\mathscr{Z},\lambda);\R^n)$,
	\begin{equation*}
		\begin{cases}
			\displaystyle H(x,\alpha_t,\rho_t,p,P,K)=\mathcal{H}(x,{q_t, \xi_t},p,P,K),\\
			\displaystyle \delta H(x,\alpha_t,\rho_t,x',\alpha_t',p,P,K)=\delta \mathcal{H}(x,q_t,\xi_t,x',q_t',p,P,K),\\
			\displaystyle	\pa_{x'}\delta H(x,\alpha_t,\rho_t,x',\alpha_t',p,P,K)=\pa_{x'}\delta \mathcal{H}(x,q_t,\xi_t,x',q_t',p,P,K).
		\end{cases}
	\end{equation*}
	Here, for $t\in[0,T]$, $\alpha_t'$ and $q_t'$ defined on the new space $(\Omega',\F')$ are copies of $\alpha_t$ and $q_t$, respectively. Therefore, by using the pathwise uniqueness of BSDE \equref{SBSDE}, the solution $(p,P,K)=(p_t,P_t,K_t)_{t\in[0,T]}$ to BSDE \equref{SBSDE} is indistinguishable from the solution $(\tp,\tP,\tK)=(\tp_t,\tP_t,\tK_t)_{t\in[0,T]}$ to BSDE \equref{RBSDE}.
	
	\begin{remark}\label{Hamiltonian_relation}
		If we set $\xi=\Law(X,\delta_{\alpha})$ when $\Law(X,\alpha)=\rho$ with $(X,\alpha)$ being an $\R^n\times U$-valued random variable on some probability space, then $\ps(\xi)=\rho$ and for any $(x,q,p,P,K)\in\R^n\times\Pc(U)\times\R^n\times\R^{n\times d}\times L^2((Z,\mathscr{Z},\lambda);\R^n)$, it holds that
		\begin{equation*}
			\mathcal{H}(x,q,\xi,p,P,K)=\int_U \tH(x,u,\xi,p,P,K)q(\d u)=\int_U H(x,u,\rho,p,P,K)q(\d u).
		\end{equation*}
		Here, $\mathcal{H}$ is the relaxed Hamiltonian defined by \eqref{RHamiltonian} and $\tH$ is the extension of $H$ defined by \eqref{eq:extensionh}.
	\end{remark}

	Based on \propref{valueeq}, we have the following necessary condition for the existence of an optimal strict control.
	\begin{theorem}[Necessary Condition for Strict Control]\label{SNEC}
		Let $\alpha\in\mathscr{U}$ be an optimal strict control minimizing the cost $J$ in \eqref{eq:svalue1} over $\mathscr{U}$ and let $X=(X_t)_{t\in[0,T]}$ be the resulting controlled state process. Consider the solution $(p,P,K)=(p_t,P_t,K_t)_{t\in[0,T]}$ of BSDE~\eqref{SBSDE}. Then, we have, $\d t\times\d\Pb\text{-}\as$
		\begin{equation}\label{eq:SSMP}
			\begin{aligned}
				&H(X_t,\alpha_t,\rho_t,p_t,P_t,K_t)+\E'\left[\delta H(X_t',\alpha_t',\rho_t,X_t,\alpha_t,p_t',P_t',K_t')\right]\\
				&\qquad\leq H(X_t,u,\rho_t,p_t,P_t,K_t)+\E'\left[\delta H(X_t',\alpha_t',\rho_t,X_t,u,p_t',P_t',K_t')\right],~\forall u\in U,
			\end{aligned}
		\end{equation}
		with $\rho_t=\Law((X_t,\alpha_t)|\G_t)$ with $t\in[0,T]$.
	\end{theorem}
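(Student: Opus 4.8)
The plan is to deduce \eqref{eq:SSMP} from the necessary condition \thmref{RNEC}, already established in the relaxed formulation, by specializing the relaxed control to a Dirac mass. First I would observe that, since $\alpha=(\alpha_t)_{t\in[0,T]}$ minimizes $J$ over $\U$, \lemref{equvalent} tells us that $q:=(\delta_{\alpha_t})_{t\in[0,T]}\in\delta(\U)\subset\Qb$ minimizes $\J$ over $\delta(\U)$. Because the relaxed problem \eqref{eq:req1state}--\eqref{eq:rvalue1} reduces to the strict one on $\delta(\U)$, we have $\J(q)=J(\alpha)=\inf_{\alpha'\in\U}J(\alpha')$, and the latter equals $\inf_{q'\in\Qb}\J(q')$ by \lemref{valueeq}. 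Hence $q$ is in fact an optimal \emph{relaxed} control over the whole of $\Qb$, so \thmref{RNEC} applies: there is an $\Fb$-adapted triplet $(\tp,\tP,\tK)$ solving the BSDE \eqref{RBSDE} with $q_t=\delta_{\alpha_t}$ and $\xi_t=\Law((X_t,\delta_{\alpha_t})\,|\,\G_t)$, for which \eqref{eq:RSMP} holds $\d t\times\d\Pb$-a.s.\ for every $v\in\Pc(U)$.

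Next I would identify this adjoint process with the one in the statement. As observed in the paragraph preceding the theorem, when $q_t=\delta_{\alpha_t}$ the controlled dynamics in the two formulations are indistinguishable and the drivers of \eqref{RBSDE} and \eqref{SBSDE} coincide $\tilde\Pb$-a.s.\ (via the Hamiltonian identities recorded there); since both equations are linear BSDEs with coefficients controlled by \assref{ass}, pathwise uniqueness forces $(\tp,\tP,\tK)$ to be indistinguishable from the solution $(p,P,K)$ of \eqref{SBSDE}. Thus \eqref{eq:RSMP} may be rewritten with $(p,P,K)$ in place of $(\tp,\tP,\tK)$.

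It then remains to evaluate \eqref{eq:RSMP} at $v=\delta_u$, $u\in U$, and translate the relaxed Hamiltonians into the strict ones. Since $\ps(\xi_t)=\Law((X_t,\alpha_t)\,|\,\G_t)=\rho_t$ by the definition \eqref{eq:affinemaping} of $\ps$, \remref{Hamiltonian_relation} gives $\mathcal{H}(X_t,\delta_{\alpha_t},\xi_t,p_t,P_t,K_t)=H(X_t,\alpha_t,\rho_t,p_t,P_t,K_t)$ and $\mathcal{H}(X_t,\delta_u,\xi_t,p_t,P_t,K_t)=H(X_t,u,\rho_t,p_t,P_t,K_t)$. For the $\delta$-Hamiltonian, I would use the chain rule \eqref{eq:chain} together with \lemref{extensiondif} and property (B.5): the linear derivative $\frac{\delta\tilde\phi}{\delta\xi}(\xi_t)$ of an extension, evaluated at a point whose $\M(U)$-component is a Dirac mass $\delta_{u_0}$, collapses the inner integral $\int_U(\cdot)\,q'(\d\bar u)$ and reduces to $\frac{\delta\phi}{\delta\rho}(\ps(\xi_t))(\,\cdot\,,u_0,\,\cdot\,)$, which yields $\delta\mathcal{H}(X_t',\delta_{\alpha_t'},\xi_t,X_t,\delta_{\alpha_t},p_t',P_t',K_t')=\delta H(X_t',\alpha_t',\rho_t,X_t,\alpha_t,p_t',P_t',K_t')$ and the analogous identity with $\delta_u$ (respectively $u$) in the last control slot. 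Substituting these identities into \eqref{eq:RSMP} with $v=\delta_u$ produces exactly \eqref{eq:SSMP}; since \eqref{eq:RSMP} already holds off a single $\d t\times\d\Pb$-null set uniformly in $v\in\Pc(U)$, the same holds uniformly in $u\in U$ (this can also be recovered from the continuity of $H,\delta H$ in $u$ and the separability of $U$).

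The main obstacle is the bookkeeping in the third paragraph: one must match carefully the argument slots of $\frac{\delta\tilde\phi}{\delta\xi}$ and $\frac{\delta\phi}{\delta\rho}$ under the extension transformation and verify the measure identity $\ps(\Law((X_t,\delta_{\alpha_t})\,|\,\G_t))=\rho_t$, so that the relaxed variational inequality collapses onto the strict Hamiltonian. The identification of the two BSDE solutions, although essential, is essentially supplied by the discussion preceding the theorem, and the passage from $\delta(\U)$-optimality to $\Qb$-optimality is immediate from \lemref{valueeq}.
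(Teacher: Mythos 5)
Your proposal is correct and follows essentially the same route as the paper: pass from strict optimality to optimality of $q=(\delta_{\alpha_t})_{t\in[0,T]}$ over all of $\Qb$ via \lemref{equvalent} and \lemref{valueeq}, invoke \thmref{RNEC}, identify the two adjoint processes by pathwise uniqueness as in the discussion preceding the theorem, and then specialize the relaxed inequality to $v=\delta_u$ using \remref{Hamiltonian_relation} and (B.5). The extra detail you supply on collapsing the linear derivatives at Dirac masses is exactly the bookkeeping the paper leaves implicit.
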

	
	\noindent{\it Proof.}\quad
	It follows from \lemref{valueeq}, \lemref{equvalent} and the prescribed optimal condition that, $q=(\delta_{\alpha_t})_{t\in[0,T]}$ minimizes the objective functional $J$ defined by \eqref{eq:rvalue1} over $\Qb$. Hence, we have from \thmref{RNEC}, $\d t\times\d\Pb\text{-}\as$
	\begin{equation*}
		\begin{aligned}
			&\mathcal{H}(X_t,q_t,\xi_t,\tp_t,\tP_t,\tK_t)+\E'\left[\delta\mathcal{H}(X_t',q_t',\xi_t,X_t,q_t,\tp_t',\tP_t',\tK_t')\right]\\
			&\quad\leq\mathcal{H}(X_t,v,\xi_t,\tp_t,\tP_t,\tK_t)+\E'\left[\delta\mathcal{H}(X_t',q_t',\xi_t,X_t,v,\tp_t',\tP_t',\tK_t')\right],~\forall v\in\Pc(U).
		\end{aligned}
	\end{equation*}
	Note that $\delta(U)=\{\delta_u;~u\in U\}\subset\Pc(U)$, it holds that, $\d t\times\d\Pb\text{-}\as$
	\begin{equation*}
		\begin{aligned}
			&\mathcal{H}(X_t,q_t,\xi_t,\tp_t,\tP_t,\tK_t)+\E'\left[\delta\mathcal{H}(X_t',q_t',\xi_t,X_t,q_t,\tp_t',\tP_t',\tK_t')\right]\\
			&\quad\leq\mathcal{H}(X_t,v,\xi_t,\tp_t,\tP_t,\tK_t)+\E'\left[\delta\mathcal{H}(X_t',q_t',\xi_t,X_t,v,\tp_t',\tP_t',\tK_t')\right],~\forall v\in\delta(U).
		\end{aligned}
	\end{equation*}
	Therefore, we arrive at, $\d t\times\d\Pb\text{-}\as$
	\begin{equation*}
		\begin{aligned}
			&H(X_t,\alpha_t,\rho_t,p_t,P_t,K_t)+\E'\left[\delta H(X_t',\alpha_t',\rho_t,X_t,\alpha_t,p_t',P_t',K_t')\right]\\
			&\quad\leq H(X_t,u,\rho_t,p_t,P_t,K_t)+\E'\left[\delta H(X_t',\alpha_t',\rho_t,X_t,u,p_t',P_t',K_t')\right],~\forall u\in U.
		\end{aligned}
	\end{equation*}
	which yields \eqref{eq:SSMP} as desired. \hfill$\Box$

	The next result provides a sufficient condition for the strict control problem.
	\begin{theorem}[Sufficient Condition for Strict Control]\label{SSUF}
		Let $\alpha\in\U$, and $X=(X_t)_{t\in[0,T]}$ be the corresponding state process under this strict control $\alpha\in\U$, and $(p,P,K)=(p_t,P_t,K_t)_{t\in[0,T]}$ be the corresponding adjoint process satisfying \eqref{eq:scond}-\eqref{SBSDE}. Assume that the extension of the Hamiltonian $H$ in \remref{Hamiltonian_relation}, denoted by $\tH$, is $L$-convex in $(x,\xi)\in\R^n\times\Pc_2(\R^n\times\Pc(U))$ and $g$ is $L$-convex in $(x,\mu)\in\R^n\times\Pc_2(\R^n)$. Then, this $\alpha\in\U$ is an optimal strict control in the sense that it minimizes the cost functional \eqref{eq:svalue1} over $\U$ provided the inequality \eqref{eq:SSMP} holds.
	\end{theorem}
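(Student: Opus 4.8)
The plan is to reduce to the relaxed sufficient condition \thmref{RSUF} through the embedding $\alpha\mapsto q:=(\delta_{\alpha_t})_{t\in[0,T]}\in\delta(\U)\subset\Qb$. First I would invoke the observations collected in the paragraph preceding \remref{Hamiltonian_relation}: under this embedding the relaxed state process driven by $q$ is indistinguishable from the strict state process $X$, and the adjoint triple $(\tp,\tP,\tK)$ solving \eqref{RBSDE} is indistinguishable from $(p,P,K)$ solving \eqref{SBSDE}. Writing $\xi_t:=\Law((X_t,\delta_{\alpha_t})|\G_t)$ and $\rho_t=\ps(\xi_t)=\Law((X_t,\alpha_t)|\G_t)$, the pointwise identities between $H,\delta H$ and $\mathcal{H},\delta\mathcal{H}$ recorded there (together with \remref{Hamiltonian_relation}) turn the strict maximum condition \eqref{eq:SSMP} into the relaxed maximum condition \eqref{eq:RSMP} \emph{restricted to Dirac masses} $v=\delta_u$, $u\in U$.

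The crucial step is to upgrade this to \eqref{eq:RSMP} for \emph{all} $v\in\Pc(U)$, which is a linearity argument. By \remref{Hamiltonian_relation} one has $\mathcal{H}(X_t,v,\xi_t,\tp_t,\tP_t,\tK_t)=\int_U\tH(X_t,u,\xi_t,\tp_t,\tP_t,\tK_t)\,v(\d u)$; and by the defining formula \eqref{deltaRdHamiltonian} together with the extension chain rule \eqref{eq:chain} applied componentwise to $\tb,\ts,\tg,\tf$, the map $v\mapsto\E'[\delta\mathcal{H}(X_t',q_t',\xi_t,X_t,v,\tp_t',\tP_t',\tK_t')]$ is also affine in $v$, since $\frac{\delta\tb}{\delta\xi}(\xi_t)(x,x',v,u)=\int_U\frac{\delta b}{\delta\rho}(\rho_t)(x,x',u',u)\,v(\d u')$ is linear in $v$ (and likewise for $\ts,\tg,\tf$). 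Hence the right-hand side of \eqref{eq:RSMP} equals $\int_U G_t(u)\,v(\d u)$, where $G_t(u)$ is precisely the right-hand side of \eqref{eq:RSMP} evaluated at $v=\delta_u$, i.e. (after the translation above) the right-hand side of \eqref{eq:SSMP}. The already-established Dirac version of \eqref{eq:RSMP} says that the left-hand side of \eqref{eq:RSMP} is $\le G_t(u)$ for every $u\in U$, so it is $\le\inf_{u\in U}G_t(u)\le\int_U G_t(u)\,v(\d u)$ for all $v\in\Pc(U)$, which is \eqref{eq:RSMP} in full. (Equivalently: an affine functional on the compact convex set $\Pc(U)$ attains its infimum at an extreme point, and the extreme points of $\Pc(U)$ are the Dirac masses.)

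It then remains to check the convexity hypotheses of \thmref{RSUF} for $q=(\delta_{\alpha_t})_{t\in[0,T]}$. The terminal cost $g$ is $L$-convex in $(x,\mu)$ by assumption. For the relaxed Hamiltonian, \remref{Hamiltonian_relation} gives $\mathcal{H}(x,q,\xi,p,P,K)=\int_U\tH(x,u,\xi,p,P,K)\,q(\d u)$, so it is an average over $q(\d u)$ of the maps $(x,\xi)\mapsto\tH(x,u,\xi,p,P,K)$, each $L$-convex by hypothesis; since the linear derivatives entering \defref{L-convex} pass under the integral $\int_U(\cdot)\,q(\d u)$ by (B.4), the average is again $L$-convex in $(x,\xi)$. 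Applying \thmref{RSUF} shows $q$ is an optimal relaxed control, i.e. $\J(q)=\inf_{\tilde q\in\Qb}\J(\tilde q)$; as $q\in\delta(\U)$ we have $\J(q)=J(\alpha)$, and \lemref{valueeq} yields $\inf_{\alpha'\in\U}J(\alpha')=\inf_{\tilde q\in\Qb}\J(\tilde q)=J(\alpha)$, so $\alpha$ is optimal for the strict problem. The main obstacle is the middle step: one must carefully track which measure argument of $\delta\mathcal{H}$ carries the linearity and invoke the extension chain rule \eqref{eq:chain}, so that the passage from the Dirac-mass condition to the full condition over $\Pc(U)$ is rigorous.
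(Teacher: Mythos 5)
Your proposal is correct and follows essentially the same route as the paper: embed $\alpha$ as the Dirac-valued relaxed control $q=(\delta_{\alpha_t})_{t\in[0,T]}$, use the integral representation of $\mathcal{H}$ and the linearity of $\delta\mathcal{H}$ in the measure argument to upgrade the strict maximum condition \eqref{eq:SSMP} to the relaxed condition \eqref{eq:RSMP} over all of $\Pc(U)$ (the paper phrases this as bounding the affine functional below by its infimum over $u\in U$), and then invoke \thmref{RSUF}. Your explicit verification that $L$-convexity of $\tH$ transfers to $\mathcal{H}$ as an average over $q(\d u)$ is a detail the paper leaves implicit, but it does not change the argument.
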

	
	\noindent{\it Proof.}\quad
	In view of \remref{Hamiltonian_relation} and the discussion above \remref{Hamiltonian_relation}, we deduce, for any $v\in\Pc(U)$,
	\begin{equation*}
		\begin{aligned}
			&\mathcal{H}(X_t,v,\xi_t,\tp_t,\tP_t,\tK_t)+\E'[\delta\mathcal{H}(X_t',\delta_{\alpha_t'},\xi_t,X_t,v,\tp_t',\tP_t',\tK_t')]\\
			&\qquad \geq \inf_{u\in\U}\left\{H(X_t,u,\rho_t,p_t,P_t,K_t)+\E'[\delta\mathcal{H}(X_t',\alpha_t',\rho,X_t,u,p_t',P_t',K_t')]\right\}\\
			&\qquad = H(X_t,\alpha_t,\rho_t,p_t,P_t,K_t)+\E'\left[\delta H(X_t',\alpha_t',\rho_t,X_t,\alpha_t,p_t',P_t',K_t')\right]\\
			&\qquad =\mathcal{H}(X_t,q_t,\xi_t,\tp_t,\tP_t,\tK_t)+\E'[\delta H(X_t',q_t',\xi_t,X_t,q_t,\tp_t',\tP_t',\tK_t')],
		\end{aligned}
	\end{equation*}
	where $q_t=\delta_{\alpha_t}$, $\rho_t=\Law((X_t,\alpha_t)|\G_t)$ and $\xi_t=\Law((X_t,\delta_{\alpha_t})|\G_t)$ for $t\in[0,T]$. Using \thmref{RSUF}, we conclude that $q=(q_t)_{t\in[0,T]}$ is an optimal relaxed control and $\alpha=(\alpha_t)_{t\in[0,T]}\in\U$ is an optimal strict control. \hfill$\Box$

	\begin{remark}
		It is noted that we do not require the convexity of the Hamiltonian w.r.t. the control variable and the $L$-differentiability w.r.t. the second marginal law (the law of control). Moreover, the control space $U$, although restricted to be compact in our framework, is not necessarily convex. Compared with Acciaio et al.~\cite{Carmona1}, we can get the equivalence of minimizing the Hamiltonian and minimizing the original problem thanks to the relaxed formulation. However, one can also follow the method in Acciaio et al.~\cite{Carmona1} to directly prove the SMP for strict control by assuming the convexity of the policy space $U$. Hence, the SMP holds when either convexity or compactness is fulfilled by the policy space $U$.
	\end{remark}
	
	\subsection{Poissonian common noise vs McKean-Vlasov jump-diffusion model}
	
	To better elaborate how the Poissonian common noise affects the SMP and the adjoint equation, we next apply our previous result to the extended MFC problem in the jump-diffusion model without common noise, where the jump term only stands for idiosyncratic noise. The McKean-Vlasov controlled state process is given by
	\begin{equation}\label{eq:seq3}
		\begin{cases}
			\displaystyle \d X_t =b(X_t,\Law(X_t,\alpha_t),\alpha_t)\d t+\sigma(X_t,\Law(X_t,\alpha_t),\alpha_t)\d W_t\\
			\displaystyle \qquad\qquad+\int_Z\gamma(X_{t-},\Law(X_{t-},\alpha_{t-}),\alpha_{t-},z)\tN(\d t,\d z),\\
			\displaystyle \Law(X_0)=\mu\in{\cal P}_2(\R^n),
		\end{cases}
	\end{equation}
	where $\tN(\d z,\d t):=N(\d z,\d t)-\lambda(\d z)\d t$ is the compensated Poisson random measure and $\alpha\in \U$ is an admissible strict control. Now, the Brownian motion and Poisson random measure are both idiosyncratic noises and there is no common noise. Hence, the mean field term appeared in the controlled dynamics \eqref{eq:seq3} is merely joint law of state and control instead of their conditional law. We aim to minimize the following cost functional over $\alpha\in\U$:
	\begin{equation}\label{eq:svalue3}
		J(\alpha):=\E\left[\int_0^Tf(X_t,\Law(X_t,\alpha_t),\alpha_t)\d t+g(X_T,\Law(X_T))\right]\to\inf_{\alpha\in\U}.
	\end{equation}
	It is not difficult to show that problem \eqref{eq:svalue3}-\eqref{eq:seq3} is well-defined under \assref{ass}. We then replace the probability measure on the copy measurable space with $\Pb$. In other words, we use $\Pb'=\Pb$ as the probability measure on the copy measurable space instead of $\Pb'=Q_{\omega}$ and other notations follow as before. Then, we can derive a similar result for the jump diffusion case and the adjoint BSDE can be obtained as 
	\begin{equation}\label{SBSDE_jump}
		\begin{cases}
			\displaystyle \d p_t=-\left\{\pa_xH(X_t,\alpha_t,\rho_t,p_t,P_t,K_t)+\E'[\pa_{x'}\delta H(X_t',\alpha_t',\rho_t,X_t,\alpha_t,p_t',P_t',K_t')] \right\}\d t\\
			\displaystyle\qquad\qquad+P_t\d W_t+\int_ZK_{t-}\tilde{N}(\d t,\d z),\\
			\displaystyle p_T=\pa_xg(X_T,\mu_T)+\E'\left[\pa_x\left(\frac{\delta g}{\delta\mu}(X_T',\mu_T)\right)(X_T)\right],
		\end{cases}
	\end{equation}
	where $\rho_t=\Law(X_t,\alpha_t)$ for $t\in[0,T]$.
	\begin{corollary}[Necessary Condition]\label{SNECj}
		Let $\alpha\in\U$ be an optimal strict control minimizing the cost $J$ in \eqref{eq:svalue3} over ${\U}$ and $X=(X_t)_{t\in[0,T]}$ be the state process \eqref{eq:seq3} under this $\alpha$. Then, there exists an $\Fb$-adapted and $\R^n\times\R^{n\times d}\times L^2((Z,\mathscr{Z},\lambda);\R^n)$-valued solution $(p,P,K)=(p_t,P_t,K_t)_{t\in[0,T]}$ to BSDE \eqref{SBSDE_jump}. Furthermore, we have, $\d t\times\d\Pb\text{-}\as$,
		\begin{equation}\label{eq:SSMP_j}
			\begin{aligned}
				&H(X_t,\alpha_t,\rho_t,p_t,P_t,K_t)+\E'\left[\delta H(X_t',\alpha_t',\rho_t,X_t,\alpha_t,p_t',P_t',K_t')\right]\\
				&\qquad\leq H(X_t,u,\rho_t,p_t,P_t,K_t)+\E'\left[\delta H(X_t',\alpha_t',\rho_t,X_t,u,p_t',P_t',K_t')\right],~\forall u\in U
			\end{aligned}
		\end{equation}
		with $\rho_t=\Law(X_t,\alpha_t)$ for $t\in[0,T]$.
	\end{corollary}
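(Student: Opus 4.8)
The plan is to obtain \corref{SNECj} as a direct specialization of \thmref{RNEC} and \thmref{SNEC}: since there is no common noise, the filtration $\Gb$ is trivial, every conditional law $\Law(\cdot|\G_t)$ appearing in Section~\ref{sec:SMP} collapses to an unconditional law $\Law(\cdot)$, and the copy space $(\Omega',\F',\Pb')$ may be taken to be an independent copy of $(\Omega,\F,\Pb)$ with $\Pb'=\Pb$ in place of the one built from the regular conditional probability distribution $Q_\omega$. With this identification, the extension transformation \eqref{eq:extensionh}, the properties (B.1)--(B.5), and the relaxed and strict Hamiltonians all transfer verbatim, and the proof is a matter of rerunning the three-step argument behind \thmref{SNEC}.

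Concretely, I would first embed the strict problem \eqref{eq:svalue3}--\eqref{eq:seq3} into its relaxed counterpart, whose state dynamics read
\begin{align*}
\d X_t&=\int_U\tb(X_t,\Law(X_t,q_t),u)q_t(\d u)\,\d t+\int_U\ts(X_t,\Law(X_t,q_t),u)q_t(\d u)\,\d W_t\\
&\quad+\int_U\int_Z\tg(X_{t-},\Law(X_{t-},q_{t-}),u,z)q_{t-}(\d u)\tN(\d t,\d z),
\end{align*}
with cost $\J(q)=\E[\int_0^T\int_U\tf(X_t,\Law(X_t,q_t),u)q_t(\d u)\,\d t+g(X_T,\Law(X_T))]$; arguing as in \lemref{valueeq} and \lemref{equvalent} via the Chattering Lemma (\lemref{chattering}) and \lemref{L2con} gives $\inf_{\alpha\in\U}J(\alpha)=\inf_{q\in\Qb}\J(q)$ together with the equivalence between optimality of $\alpha$ for the strict problem and optimality of $q=(\delta_{\alpha_t})_{t\in[0,T]}$ for the relaxed problem over $\delta(\U)$. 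Next I would carry out the first-order variation exactly as in \lemref{statedif}, \lemref{dif} and \lemref{valuedif} with $\Pb'=\Pb$, producing the variational process $V=(V_t)_{t\in[0,T]}$ that solves the linear SDE \eqref{eq:variation} (now with $\E'$ the expectation over the independent copy) and, from optimality of $q$, the variational inequality \eqref{eq:value_variation}. Finally, introducing the adjoint BSDE \eqref{SBSDE_jump} --- which is linear, hence uniquely solvable under \assref{ass} by \cite{TaoHao}, the martingale term $M$ absorbing the lack of martingale representation in the $(W,N)$-filtration alone --- I would apply It\^o's formula to $p_t\cdot V_t$, evaluate $\E[p_T\cdot V_T]$ both from the terminal condition and from the dynamics, symmetrize the double-expectation terms by Fubini (precisely the step where the common-noise proof invoked $Q_\omega=Q'_\omega$ on $\F$ and $\Pb=R$, both automatic here since $\Pb'=\Pb$ and $X'\overset{d}{=}X$), combine with \eqref{eq:value_variation}, and then restrict the test control to Dirac measures $v\in\delta(\U)\subset\Qb$ to pass from the integrated relaxed inequality to the pointwise inequality \eqref{eq:SSMP_j}.

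I do not anticipate a genuine obstacle, since the statement is a degeneration of \thmref{SNEC} to trivial common noise; the only care needed is bookkeeping --- verifying that each appeal in the common-noise argument to the regular conditional probability distribution $Q_\omega$ (the identity \eqref{eq:Req}, the identification $Q_\omega=Q'_\omega$ on $\F$, and the disintegration representation of $\E'$) collapses to the standard independence and Fubini identities once $\Pb'=\Pb$, so that no new moment estimate or compactness argument beyond those already established in Section~\ref{sec:SMP} is required.
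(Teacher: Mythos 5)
Your proposal is correct and matches the paper's own treatment: the paper likewise obtains \corref{SNECj} by rerunning the derivation of \thmref{RNEC} and \thmref{SNEC} with the copy-space measure replaced by $\Pb'=\Pb$ (an independent copy) in place of the r.c.p.d. $Q_\omega$, so that conditional laws become unconditional and the Fubini symmetrization is the standard product-measure identity. The only difference is that the paper leaves this specialization as a sketch, whereas you spell out the bookkeeping explicitly.
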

	
	On the other hand, we also have
	\begin{corollary}[Sufficient Condition]\label{SSUFj}
		Let $\alpha\in\U$ be a strict control, $X=(X_t)_{t\in[0,T]}$ be the corresponding controlled state process in \eqref{eq:seq3} under $\alpha\in\U$, and $(p,P,K)=(p_t,P_t,K_t)_{t\in[0,T]}$ be the corresponding adjoint process satisfying \eqref{eq:scond} and \eqref{SBSDE_jump}. If the extension of the Hamiltonian $H$, denoted by $\tH$, is $L$-convex in $(x,\xi)\in\R^n\times\Pc_2(\R^n\times\Pc(U))$ and $g$ is $L$-convex in $(x,\mu)\in\R^n\times\Pc_2(\R^n)$. Then, $\alpha\in\U$ is an optimal strict control if the equality in \eqref{eq:SSMP_j} holds.
	\end{corollary}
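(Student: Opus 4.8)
The plan is to read off the corollary from \thmref{SSUF} (hence from \thmref{RSUF}) by specializing the general Poissonian-common-noise setting to the degenerate case in which no common noise is present. Here the filtration $\Gb$ carries no information, so the conditional laws $\Law(\cdot\,|\,\G_t)$ collapse to unconditional laws $\Law(\cdot)$, the r.c.p.d.\ $Q_\omega$ of $\Pb$ given $\G_T$ equals $\Pb$, the copy space $(\Omega',\F',\Pb')$ is simply an independent copy of $(\Omega,\F,\Pb)$, and $\E'$ becomes ordinary expectation over that copy; the identity \eqref{eq:Req} and the relation ``$Q_\omega=Q'_\omega$'' used in \thmref{RNEC} and \thmref{RSUF} are then trivially true. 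Under this dictionary the extension transformation of Subsection~\ref{subsecB}, the relaxed formulation of Subsection~\ref{sec:extendedMFCPoiCommon}, and the adjoint BSDE \eqref{RBSDE} become exactly their counterparts for \eqref{eq:seq3}--\eqref{eq:svalue3}, with \eqref{SBSDE_jump} the specialization of \eqref{SBSDE} under $\rho_t=\Law(X_t,\alpha_t)$.

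Concretely, I would proceed as in the proof of \thmref{SSUF}: set $q=(\delta_{\alpha_t})_{t\in[0,T]}\in\delta(\U)$, observe that the relaxed dynamics driven by $q$ coincides pathwise with \eqref{eq:seq3}, and recall from \remref{Hamiltonian_relation} that the hypothesis ``$\tH$ is $L$-convex in $(x,\xi)$'' is precisely what is needed to run the sufficiency argument of \thmref{RSUF}. For an arbitrary competitor $v\in\Qb$ with controlled state $X^v$, expand
\[
J(v)-J(q)=\E\!\left[\int_0^T\!\!\Big(\int_U\tf(X_t^v,\xi_t^v,u)v_t(\d u)-\int_U\tf(X_t,\xi_t,u)q_t(\d u)\Big)\d t+g(X_T^v,\mu_T^v)-g(X_T,\mu_T)\right],
\]
use the $L$-convexity of $g$ together with the terminal condition of \eqref{SBSDE_jump} and Fubini over the copy space to get $\E[g(X_T^v,\mu_T^v)-g(X_T,\mu_T)]\ge\E[p_T\cdot(X_T^v-X_T)]$, and then apply It\^o's formula to $p_t\cdot(X_t^v-X_t)$ using \eqref{eq:seq3} and \eqref{SBSDE_jump}. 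Grouping the drift, diffusion and jump contributions exactly as in \thmref{RSUF} and invoking the $L$-convexity of $\tH$ in $(x,\xi)$ --- with the Clarke generalized gradient to accommodate that $\mathcal{H}$ is merely \emph{linear}, not differentiable, in the measure variable --- the first-order terms in $X_t^v-X_t$ cancel, leaving $J(v)-J(q)$ bounded below by the integrated Hamiltonian gap, which is nonnegative by \eqref{eq:SSMP_j}. Since $v\in\Qb$ is arbitrary, $q$ is an optimal relaxed control, and \lemref{valueeq} together with \lemref{equvalent} upgrades this to optimality of $\alpha$ in $\U$.

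Everything above is a mechanical transcription of the common-noise proof; no new idea is required. The only thing to watch is bookkeeping: one must check that each appeal in the proofs of \thmref{RSUF} and \thmref{SSUF} to the disintegration structure of $\tilde{\Pb}$ (the identities $\Pb=R$ on $\F$, $Q_\omega=Q'_\omega$ on $\F$, and the Fubini interchange between $\E$ and $\E'$) degenerates consistently when $\Pb'=\Pb$, and that the adjoint BSDE \eqref{SBSDE_jump} inherits existence and pathwise uniqueness from the linear-BSDE theory used for \eqref{RBSDE}. I do not anticipate a genuine obstacle; the substantive content already sits in \thmref{RSUF}, and the corollary is essentially a consistency check that the general framework specializes correctly.
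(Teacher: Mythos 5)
Your proposal matches the paper's treatment: the paper gives no separate proof of \corref{SSUFj}, stating only that one replaces $\Pb'=Q_\omega$ by $\Pb'=\Pb$ on the copy space and repeats the derivation of \thmref{RSUF} and \thmref{SSUF}, which is precisely the specialization you carry out. The only quibble is your phrasing that $\Gb$ ``carries no information'' --- the Poisson filtration is still nontrivial in the jump-diffusion model; what changes is that the coefficients depend on unconditional laws, so the conditioning is simply absent --- but this does not affect the argument.
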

	Although it looks like that the adjoint BSDE \eqref{SBSDE_jump} of the jump diffusion resembles \eqref{SBSDE}, the solutions are in fact fundamentally different, primarily due to the different interpretations of $\E'$ in \eqref{SBSDE} and \eqref{SBSDE_jump}, see \thmref{smp_HJB} and Section~\ref{sec:example} for more details.
	
	\section{The HJB Equation under Poissonian Common Noise}\label{sec:HJB}
	
	This section is devoted to deriving the HJB equation for our extended MFC problem with strict open-loop controls and relate the PDE to our BSDE under the previous SMP. To derive the HJB equation, we generalize the method proposed in Motte and Pham~\cite{Pham} in a discrete-time setting to our continuous-time setting with Poissonian common noise. Firstly, we shall propose a new optimal control problem of Fokker-Planck equation whose state process is related to the Fokker-Planck equation of the original problem. Thus, we can heuristically establish the HJB equation from the new problem. 
	Secondly, by imposing some mild assumptions, we show that, given a smooth solution to the HJB equation, this candidate coincides with the value function of the original extended MFC problem, which can be almost viewed as a verification theorem. As a result, we  prove the equivalence between the value functions of these two control problems, and the two value functions should solve the HJB equation we derived in the last step.  The conditional law invariance can be obtained as a byproduct. Thirdly, we will relate the HJB equation to the BSDE derived from our previous stochastic maximum principle.
	
	Throughout the section, consider an enlarged probability space $(\Omega \times \Omega^3 \times \Omega^4, \mathcal{F} \otimes \mathcal{F}^3 \otimes \mathcal{F}^4, \mathbb{P} \times \mathbb{P}^3 \times \mathbb{P}^4)$, where $(\Omega, \mathcal{F}, \mathbb{P})$ is given in~\eqref{eq:probabspace}, $(\Omega^3, \mathcal{F}^3, \mathbb{P}^3)$ is a Polish space to support the initial state $\vartheta$, a square-integrable random variable in $L^2((\Omega,\F^3,\Pb),\R^n)$, and $(\Omega^4, \mathcal{F}^4, \mathbb{P}^4)$ is another Polish space to support an $\mathcal{F}^4$-measurable random variable $I_0$ with uniform distribution on $[0, 1]$. We still denote the enlarged probability space as $(\Omega, \mathcal{F}, \mathbb{P})$ with a slightly abuse of notation. By standard separation of the decimals of $I_0$ (c.f. Lemma 2.21 in Kallenberg \cite{Kallenberg2002}), there exists an i.i.d. sequence of $\mathcal{F}^4$-adapted uniform random variables $(I_k)_{k \in \mathbb{N}}$, independent of the initial state $\vartheta$, $W$ and $N$.

	Let us consider the dynamic version of extended MFC problem \eqref{eq:svalue1}-\eqref{eq:seq1} by varying the initial time and state. Given a strict open-loop control $\alpha \in \U$, the dynamic is given by, for $s\in(t,T]$,
	\begin{equation}\label{extended-MFC-dynamics}
		\begin{aligned}
			dX_s &= b(X_s, \Law((X_s, \alpha_s)|\G_s), \alpha_s)\d s + \sigma(X_s, \Law((X_s, \alpha_s)|\G_s), \alpha_s) \d W_s \\
			&\quad + \int_Z \gamma(X_{s-}, \Law((X_{s-}, \alpha_{s-}|\G_{s-}),\alpha_{s-}, z) \widetilde N(\d s, \d z),\quad X_t = \vartheta \in L^2(\Omega;\R^n).
		\end{aligned}
	\end{equation}
	The cost functional is given by, for $(t,\vartheta,\alpha)\in[0,T]\times L^2(\Omega;\R^n)\times\U$,
	\begin{equation}\label{eq:HJBobjectfcn}
		J_{ol}(t,\vartheta;\alpha):=\E\left[\int_t^Tf(X_s,\Law((X_s,\alpha_s)|\G_s),\alpha_s)\d s+g(X_T,\Law(X_T|\G_T))\right].
	\end{equation}
	We aim to minimize the above cost functional $J_{ol}$, and the value function is given by
	\begin{equation}\label{valuefunc}
		J^*_{ol}(t,\vartheta):=\inf_{\alpha\in\U}J_{ol}(t,\vartheta;\alpha).
	\end{equation}
	Throughout Section \ref{sec:HJB}, we denote the cost functional with strict open-loop controls as $J_{ol}$ to distinguish it from that of the new problem below.
	
	\begin{remark}\label{inva}
		{Due to Poissoian common noise and the joint conditional law dependence, it is not easy to prove the law invariance property of the value function \eqref{valuefunc} and establish the DPP. Therefore, we follow a different approach to derive the HJB equation. Our idea is to introduce a new controlled Fokker-Planck problem. It is easier to show that the value function of this new problem is law invariant and establish the corresponding HJB equation. The key difficulty is the equivalence between the original problem and the new controlled Fokker-Planck problem.}  
	\end{remark}
	
	The introduction of the controlled Fokker-Planck problem needs some notations. Denote by $\hat\U(\R^n)$ the set of transition kernels on $\R^n \times U$. That is, an element of $\hat\U(\R^n)$ is a measurable mapping $\pi:\R^n\mapsto\Pc(U)$ in the sense that $x\to \pi(x)(A)$ is measurable for all $A\in\mathscr{B}(U)$. Denote by $\widehat \U$ the set of $\mathbb{G}$-adapted process $\hat\alpha$ valued in $\hat\U(\R^n)$, and introduce measurable functions $\hat{b}: \R^n \times \Pc_2(\R^n) \times \hat\U(\R^n) \to \R^n$, $\hat\sigma: \R^n \times \Pc_2(\R^n) \times \hat\U(\R^n) \to \R^{n\times d}$ and $\hat{\gamma}:\R^n \times \Pc_2(\R^n) \times \hat\U(\R^n){\times Z\to\R^n}$ by
	\begin{equation}\label{aggregate}
		\begin{cases}
			\displaystyle \hat b(x, \mu, \hat u) = \int_U b(x, \mu \cdot \hat u, u)\hat u(x, \d u),\\
			\displaystyle~\hat \sigma\hat\sigma^{\T}(x, \mu, \hat u) = \int_U \sigma\sigma^{\T}(x, \mu \cdot \hat u, u)\hat u(x, \d u),\\
			\displaystyle ~\hat \gamma(x, \mu, \hat u,z) = \int_U \gamma(x, \mu \cdot \hat u, u,z)\hat u(x, \d u).
		\end{cases}
	\end{equation}
	For any $(\mu,\hat u,z)\in\Pc_2(\R^n)\times\hat \U(\R^n)\times Z$, let us consider the following linear mapping $I^{\mu, \hat u, z}:C_b(\R^n)\to C_b(\R^n)$ defined by
	\begin{equation}\label{eq:mappingI}
		I^{\mu, \hat u, z}(g)(\cdot):=\int_Ug\left(\cdot+\gamma(\cdot,\mu\cdot\hat u,u,z)\right)\hat u(\cdot,\d u),\quad \forall g\in C_b(\R^n).
	\end{equation}
	Denote by $I^{\mu, \hat u, z, *}:C_b^*(\R^n)\mapsto C_b^*(\R^n)$ the adjoint operator of the mapping $I^{\mu, \hat u, z}$ with $C_b^*(\R^n)$ being the dual space of $C_b(\R^n)$. 
	{ It will used for the characterization of the jump of the conditional state distribution flow. The following lemma provides an explicit characterization of the adjoint operator \( I^{\mu,\hat u,z,*} \), whose proof is postponed to Appendix~\ref{sec:appendix-B}.
		
		\begin{lemma}\label{measure_shift}
			For any \( (\mu,\hat u,z)\in \Pc_2(\R^n)\times\hat\U(\R^n)\times Z \) and \( \nu\in\Pc_2(\R^n) \), the image \( I^{\mu,\hat u,z,*}(\nu) \) belongs to \( \Pc_2(\R^n) \). Moreover, let \( (X,\alpha) \) be an \( \R^n\times U \)-valued random variable defined on a probability space \( (\Omega,\F,\Pb) \) with a sub-\(\sigma\)-algebra \( \G\subset\F \). Set \( \rho:=\Law((X,\alpha)\mid\G) \), \( \mu:=\Law(X\mid\G) \), and \( \hat\alpha(x)(\d u):=\Law(\alpha\mid\G,X=x)(\d u) \). Then it holds that, \(\Pb\)-a.s.,
			\begin{equation}\label{eq:shift}
				I^{\mu,\hat\alpha,z,*}\mu=\Law\bigl(X+\gamma(X,\rho,\alpha,z)\mid\G\bigr).
			\end{equation}
			Here \( I^{\mu,\hat\alpha,z,*}\mu(\omega) \) should be interpreted as \( I^{\mu(\omega),\hat\alpha(\omega),z,*}(\mu(\omega)) \) for any \( \omega\in\Omega \). Thus the adjoint operator \( I^{\mu,\hat\alpha,z,*} \) can be identified as a generalized measure shift.
		\end{lemma}
		
	}
	
	We also have the following remark  when the jump coefficient in the state process {depends on the control only via the conditional joint law.}
	\begin{remark}
		{When the jump coefficient $\gamma(\cdot)$ does not depend on the control variable, $I^{\mu, \hat u, z, *}$ becomes an explicit measure shift operator in terms of the push forward of the measure in the sense that $(I^{\mu, \hat u, z, *}\mu)(x \in B) = \mu(x + \gamma(x, \mu \cdot \hat u, z) \in B)$ for any Borel measurable set $B \subset \R^n$. }
\end{remark}

\paragraph{The controlled Fokker-Planck problem} The dynamics of the the controlled Fokker-Planck problem problem is a controlled stochastic Fokker-Planck equation (in the sense of distributions), which is described as 
\begin{equation}\label{equ:extended-FP-mu_t}
	d\mu_s = A_0^{\hat\alpha}\mu_s \d s + \int_{Z} A_1^{\hat\alpha} \mu_{s-} N(\d s, \d z),\quad \mu_t = \Law(\vartheta|\G_t).
\end{equation}
Here, $\hat\alpha  \in \widehat \U$ with $\hat\alpha_s(\cdot)=\Law(\alpha_s|X_s=\cdot,\G_s)$, and $A_0^{\hat\alpha}$ and $A_1^{\hat\alpha}$ are operators defined by
\begin{equation}\label{extended-operatorA_0*}
	\begin{cases}
		\displaystyle A_0^{\hat\alpha}\mu_s := -\pa_x\left(\left(\hat b(x, \mu_s, \hat\alpha_s) -  \langle\hat\gamma(x, \mu_s, \hat\alpha_s,\cdot), \lambda\rangle \right)\mu_s\right)+ \frac12 \pa_{xx} \left(\hat \sigma
		\hat\sigma^{\T}(x, \mu_s, \hat\alpha_s) \mu_s\right),\\
		\displaystyle A_1^{\hat\alpha} \mu_{s} := I^{\mu_{s},\hat\alpha_{s}, z, *}\mu_{s}-\mu_{s}.
	\end{cases}
\end{equation}
The cost functional and the value function are given by
\begin{equation}\label{liftMFC-value}
	J_{fp}(t,\mu;{\hat\alpha}) = \E\left[\int_t^T\hat f(\mu_s, \hat\alpha_{s})\d s + \hat g(\mu_T)\right],\quad  J_{fp}^*(t, \mu) = \inf_{\hat\alpha \in \widehat\U} J_{fp}(t, \mu; {\hat\alpha}),
\end{equation}
where we define
\begin{equation}\label{running-cost}
	\hat f(\mu, \hat\alpha) := \int_{\R \times U} f(x, \mu \cdot \hat\alpha, u)(\mu \cdot \hat\alpha)(\d x, \d u),~~\text{and}~~~ \hat g(\mu) := \int_{\R^n} g(x)\mu(\d x).
\end{equation}
Note that the the controlled Fokker-Planck problem problem treats the conditional state distribution as the new state and regards the common noise as the true sole noise affecting the system, hence both the new state and the new control are only adapted to Poissonian common noise filtration. Furthermore, the value function of the the controlled Fokker-Planck problem problem is law invariant.

\begin{remark}\label{jumps}
	By virtue of \eqref{equ:extended-FP-mu_t}, we highlight that the measure $\mu_s$ has jumps caused by the Poissonian common noise, which differs substantially from the case of jump diffusion model in which the measure $\mu_s$ is continuous in time $s$ (see the discussion in Burzoni et al. \cite{Burzoni2020} and Guo et al.~\cite{XWei}). Moreover, from \eqref{equ:extended-FP-mu_t}, we can observe that,  the state $X_s$ and the conditional law $\mu_s$ have exactly the same jump times.
\end{remark}

\subsection{Formal derivation of the HJB equation}
In this section, we drive the HJB equation from the the controlled Fokker-Planck problem problem. Let us introduce the space $C^{1;1,1}([0,T]\times\Pc_2(\R^n))$ as the set of mapping $J:[0,T]\times\Pc_2(\R^n)\mapsto\R$ such that $\pa_tJ(t,\mu),\pa_{\mu}J(t,\mu)(x)$ and $\pa_x\pa_{\mu}J(t,\mu)(x)$ are jointly continuous, where $\pa_{\mu}J$ stands for the $L$-derivative of the mapping $J$. We first give below the It\^{o}'s rule, whose proof is standard and hence omitted.

\begin{lemma}[It\^{o}'s formula]\label{Ito}
	Let the measure-valued process $\mu=(\mu_s)_{s\in[t,T]}$ satisfy the dynamics \eqref{equ:extended-FP-mu_t}, and assume that $J\in C^{1;1,1}([0,T]\times\Pc_2(\R^n))$. Then, it holds that
	\begin{equation*}
		\begin{aligned}
			\d J(s,\mu_s)&=\left[\pa_sJ(s,\mu_s)+\int_{\R^n}\left((\hat b(x,\mu_s,\hat\alpha_s)-\langle\hat\gamma(x,\mu_s,\hat\alpha_s,\cdot),\lambda\rangle)\cdot\pa_{\mu}J(s,\mu_s)(x)\right.\right.\\
			&\quad\left.\left.+\frac12\tr(\hat\sigma\hat\sigma^{\T}(x,\mu_s,\hat\alpha_s)\pa_x\pa_{\mu} J(s,\mu_s)(x))\right)\mu_s(\d x)\right]\d s\\
			&\quad+\int_Z\left(J(s,I^{\mu_{s-},\hat\alpha_{s-},z, *}\mu_{s-})-J(s,\mu_{s-})\right)N(\d s,\d z).
		\end{aligned}
	\end{equation*}
\end{lemma}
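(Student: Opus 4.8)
The plan is to use that the common Poisson noise $N$ has finite activity, so that along $\Pb$-a.e. path the measure flow $(\mu_s)_{s\in[t,T]}$ is \emph{piecewise absolutely continuous}: between the finitely many atoms of $N$ it solves the Fokker--Planck equation $\d\mu_s=A_0^{\hat\alpha}\mu_s\,\d s$ with \emph{no martingale part}, and at each atom it is transported by the adjoint operator occurring in $A_1^{\hat\alpha}$. Concretely, since $\lambda(Z)<\infty$ there are $\Pb$-a.s. only finitely many atoms $(\tau_1,\zeta_1),\dots,(\tau_M,\zeta_M)$ of $N$ in $[t,T]\times Z$; set $\tau_0:=t$, $\tau_{M+1}:=T$, and recall from the moment estimates for \eqref{extended-MFC-dynamics} that $s\mapsto\mu_s$ is c\`adl\`ag with $\sup_{s\in[t,T]}M_2(\mu_s)<\infty$. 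By \lemref{lemma:extended-FP} the path $\mu$ solves $\d\mu_s=A_0^{\hat\alpha}\mu_s\,\d s$ on each $[\tau_i,\tau_{i+1})$ and, by \lemref{measure_shift}, satisfies $\mu_{\tau_i}=I^{\mu_{\tau_i-},\hat\alpha_{\tau_i-},\zeta_i,*}\mu_{\tau_i-}$. Telescoping the increment,
\begin{align*}
J(T,\mu_T)-J(t,\mu_t)=\sum_{i=0}^{M}\big(J(\tau_{i+1}-,\mu_{\tau_{i+1}-})-J(\tau_i,\mu_{\tau_i})\big)+\sum_{i=1}^{M}\big(J(\tau_i,\mu_{\tau_i})-J(\tau_i,\mu_{\tau_i-})\big),
\end{align*}
and the two sums are treated by different arguments.

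The jump sum is immediate: from $\mu_{\tau_i}=I^{\mu_{\tau_i-},\hat\alpha_{\tau_i-},\zeta_i,*}\mu_{\tau_i-}$ one gets $J(\tau_i,\mu_{\tau_i})-J(\tau_i,\mu_{\tau_i-})=J(\tau_i,I^{\mu_{\tau_i-},\hat\alpha_{\tau_i-},\zeta_i,*}\mu_{\tau_i-})-J(\tau_i,\mu_{\tau_i-})$, so summing over $i$ is exactly the Poisson integral $\int_t^T\!\int_Z\big(J(s,I^{\mu_{s-},\hat\alpha_{s-},z,*}\mu_{s-})-J(s,\mu_{s-})\big)N(\d s,\d z)$ in the statement; no Taylor expansion or compensation enters, since $J$ is the composed function and the jumps of $\mu$ are exact. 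For each continuous piece $[\tau_i,\tau_{i+1})$, I would prove the chain rule for $J\in C^{1;1,1}$ along $\d\mu_s=A_0^{\hat\alpha}\mu_s\,\d s$: for $\tau_i\le s<s'<\tau_{i+1}$ split $J(s',\mu_{s'})-J(s,\mu_s)=\big(J(s',\mu_{s'})-J(s,\mu_{s'})\big)+\big(J(s,\mu_{s'})-J(s,\mu_s)\big)$; the first bracket equals $\int_s^{s'}\partial_r J(r,\mu_{s'})\,\d r$ by joint continuity of $\partial_tJ$, and the second equals $\int_0^1\!\int_{\R^n}\frac{\delta J}{\delta\mu}\big(s,\mu_s+\theta(\mu_{s'}-\mu_s)\big)(x)\,(\mu_{s'}-\mu_s)(\d x)\,\d\theta$ by the linear functional derivative. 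Since $J\in C^{1;1,1}$, the map $x\mapsto\frac{\delta J}{\delta\mu}(s,m)(x)$ is $C^2$ with $\partial_x\frac{\delta J}{\delta\mu}=\partial_\mu J$ and $\partial_{xx}\frac{\delta J}{\delta\mu}=\partial_x\partial_\mu J$; writing $\mu_{s'}-\mu_s=\int_s^{s'}A_0^{\hat\alpha}\mu_r\,\d r$ and integrating by parts against the first- and second-order differential operator defining $A_0^{\hat\alpha}$, the second bracket becomes $\int_s^{s'}\!\int_{\R^n}\big[(\hat b-\langle\hat\gamma,\lambda\rangle)\cdot\partial_\mu J(s,\cdot)(x)+\tfrac12\tr(\hat\sigma\hat\sigma^{\T}\,\partial_x\partial_\mu J(s,\cdot)(x))\big]\mu_r(\d x)\,\d r$, up to the $\theta$-average. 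Dividing by $s'-s$ and letting $s'\downarrow s$ (using $\mu_r\to\mu_s$ off jump times, joint continuity of $\partial_\mu J$ and $\partial_x\partial_\mu J$, and the uniform moment bound to dominate) yields the right derivative of $s\mapsto J(s,\mu_s)$ equal to the bracket in the statement; a standard argument then gives that $s\mapsto J(s,\mu_s)$ is absolutely continuous on $[\tau_i,\tau_{i+1})$ with that derivative a.e., and integrating and summing over $i$ recovers the $\d s$-term.

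The main obstacle will be the rigorous justification of the chain rule on the continuous pieces: the integration by parts against $A_0^{\hat\alpha}\mu_r$, which is an order-two distribution, and the passage to the limit $s'\downarrow s$. This needs (i) enough regularity and growth of $x\mapsto\frac{\delta J}{\delta\mu}(s,m)(x)$ beyond bare joint continuity of its $x$-derivatives, typically at-most-linear growth of $\partial_\mu J$ and $\partial_x\partial_\mu J$ in $x$ (or else a cutoff $|x|\le R$ and a limit $R\to\infty$ via $\sup_rM_2(\mu_r)<\infty$), so that the pairing $\int_{\R^n}[\cdots]\mu_r(\d x)$ converges and is continuous in $r$ and in its measure argument; (ii) uniform continuity of $\partial_tJ,\partial_\mu J,\partial_x\partial_\mu J$ on a bounded subset of $\mathcal{P}_2(\R^n)$ containing $\{\mu_r:r\in[t,T]\}$, to interchange the $\theta$-average with the limit and identify the limit of the difference quotients; and (iii) the bookkeeping that $M$, $\tau_i$, $\zeta_i$ are random but, pathwise, the deterministic-in-$\mathcal{G}$ analysis above applies. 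An alternative, heavier, route would realize $\mu_s=\mathcal{L}(X_s|\mathcal{G}_s)$ and deduce the formula from the It\^o formula for functions of conditional laws, but the route above is more transparent given \lemref{lemma:extended-FP}.
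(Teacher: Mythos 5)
Your proposal is correct and follows essentially the same route as the paper: both decompose the increment into the finitely many exact jumps (handled directly via the adjoint operator $I^{\mu_{s-},\hat\alpha_{s-},z,*}$, with no compensation) and the inter-jump continuous pieces, where the chain rule is obtained by combining the linear functional derivative with the Fokker--Planck equation and pairing $\frac{\delta J}{\delta\mu}$ against $A_0^{\hat\alpha}\mu_r$. The technical obstacle you flag — justifying this pairing for the order-two distributional dynamics — is exactly what the paper resolves, by approximating $\frac{\delta J}{\delta\mu}(r_i,\mu_{r_i})(\cdot)$ first by compactly supported $C^2$ functions and then by $C_c^\infty$ test functions before passing to the limit; your mesh-free right-derivative-plus-absolute-continuity phrasing is an interchangeable variant of the paper's mesh refinement $r_i$ with $j\to\infty$.
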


\begin{remark}\label{Itoremark}
	It follows from \lemref{measure_shift} that, when the control $\hat\alpha$ can be related to an original control $\alpha$ as in \eqref{eq:hatalpha}, the jump term can be rewritten as a measure shift form:
	\begin{equation*}
		\begin{aligned}
			\d J(s,\mu_s)&=\left[\pa_sJ(s,\mu_s)+\int_{\R^n}\left((\hat b(x,\mu_s,\hat\alpha_s)-\langle\hat\gamma(x,\mu_s,\hat\alpha_s,\cdot),\lambda\rangle)\cdot\pa_{\mu}J(s,\mu_s)(x)\right.\right.\\
			&\quad\left.\left.+\frac12\tr(\hat\sigma\hat\sigma^{\T}(x,\mu_s,\hat\alpha_s)\pa_x\pa_{\mu}J(s,\mu_s)(x))\right)\mu_s(\d x)\right]\d s\\
			&\quad+\int_Z\left(J(s,\Law(X_{s-} + \gamma(X_{s-},\mu_{s-}\cdot\hat\alpha_{s-},\alpha_{s-},z)|\G_{s-}))-J(s,\Law(X_{s-}|\G_{s-}))\right)N(\d s,\d z).
		\end{aligned}
	\end{equation*}
\end{remark}

It is easy to verify that $\mu=(\mu_t)_{t\in[0,T]}$ satisfies the flow property whenever \eqref{equ:extended-FP-mu_t} has a unique solution. We assume that the following DPP for the value function $J_{fp}^*$ holds in order to heuristically derive the HJB equation, for $h>0$,
\begin{equation}\label{dpp:extended-mfc-poisson-common-noise}
	J_{fp}^*(t, \mu) = \inf_{\hat\alpha  \in \widehat \U}\E\left[\int_t^{t + h} \hat f(\mu_s, \hat\alpha_s)\d s + J_{fp}^*(t + h, \mu_{t + h})\right].
\end{equation}
Suppose that $J_{fp}^*$ has sufficient regularity. Then, from \lemref{Ito}, DPP~\eqref{dpp:extended-mfc-poisson-common-noise},
we arrive at the following dynamic programming equation satisfied by the value function $J_{fp}^*(t,\mu)$:
\begin{equation}\label{extended-HJB-1-poisson-common-noise}
	\begin{aligned}
		&\pa_tJ_{fp}^*(t,\mu) +\inf_{\hat\alpha \in \hat\U(\R^n)}\biggl[\int_{\R^n}\Big(\hat f(\mu, \hat\alpha) + \big(\hat b(x, \mu, \hat\alpha) - \langle \hat\gamma(x, \mu , \hat\alpha,\cdot), \lambda \rangle \big)\cdot \pa_{\mu}J_{fp}^*(t,\mu)(x)\\
		&~~+ \frac{1}{2} \tr\left(\hat\sigma\hat\sigma^{\T}(x, \mu, \hat\alpha)\pa_x\pa_{\mu}J_{fp}^*(t,\mu)(x) \right)\Big)\mu(dx)+ \int_Z \left(J_{fp}^*(t,I^{\mu,\hat\alpha,z,*}\mu)  - J_{fp}^*(t,\mu)\right) \lambda(dz)\biggl]= 0,\\
		&J_{fp}^*(T,\mu)=\hat g(\mu),
	\end{aligned}
\end{equation}
where $\hat b$, $\hat\sigma$, $\hat\gamma$ and $\hat f$ are given in \eqref{aggregate} and \eqref{running-cost}, respectively.
Similar to the result in Motte and Pham~\cite{Pham}, we provide several alternative forms of the HJB equation \eqref{extended-HJB-1-poisson-common-noise}.
\begin{lemma}\label{lemma1}
	Let us define the following operator on $C^{1;1,1}([0,T]\times\Pc_2(\R^n))$ that
	\begin{align}\label{operator-T-hatalpha}
		\mathcal{T}^{\hat\alpha}J(t,\mu)&:=\left\{\int_{\R}\left[\hat f(\mu,\hat\alpha)+(\hat b(x,\mu,\hat\alpha)-\langle\hat\gamma(x,\mu,\hat\alpha,\cdot),\lambda\rangle)\pa_{\mu}J(t,\mu)(x)\right.\right.\\
		&~~\left.\left.+\frac12\tr\left(\hat\sigma\hat\sigma^{\T}(x,\mu,\hat\alpha)\pa_x\pa_{\mu}J(t,\mu)(x)\right)\right]\mu(\d x)+\int_Z \left[J(t,I^{\mu,\hat{\alpha},z, *}\mu)-J(t,\mu)\right]\lambda(\d z)\right\}. \nonumber
	\end{align}
	Define $\mathcal{T} J(t, \mu) =\inf_{\hat\alpha\in\hat \U(\R)} \mathcal{T}^{\hat\alpha} J(t, \mu)$. Then, for any $J\in C^{1;1,1}([0,T]\times\Pc_2(\R^n))$, it holds that $\mathcal{T}J(t,\mu)={\rm T}J(t,\mu)=\mathbb{T}J(t,\mu)$ for all $(t,\mu)\in [0,T]\times\Pc_2(\R^n)$.
	Here, the operators ${\rm T}$ and $\mathbb{T}$ are defined respectively by, for $(t,\mu)\in [0,T]\times\Pc_2(\R^n)$,
	{\small
		\begin{equation*}
			\begin{aligned}
				&{\rm T}J(t,\mu):=\inf_{\alpha\in L(\R\times[0,1];U)}\Bigg\{\E\bigg[f(\vartheta,\Law(\vartheta,\alpha(\vartheta,I)),\alpha(\vartheta,I))+(b(\vartheta,\Law(\vartheta,\alpha(\vartheta,I)),\alpha(\vartheta,I))\\
				&-\langle\gamma(\vartheta,\Law(\vartheta,\alpha(\vartheta,I)),\alpha(\vartheta,I),\cdot),\lambda\rangle)\pa_{\mu}J(t,\mu)(\vartheta)\!\!+\!\!\frac12\tr\left(\sigma\sigma^{\T}(\vartheta,\Law(\vartheta,\alpha(\vartheta,I)),\alpha(\vartheta,I))\pa_x\pa_\mu J(t,\mu)(\vartheta)\right)\bigg]\\
				&\quad+\int_Z \left(J(t,\Law(\vartheta+\gamma(\vartheta,\Law(\vartheta,\alpha(\vartheta,I)),\alpha(\vartheta,I),z)))-J(t,\mu)\right)\lambda(\d z)\Bigg\},\\
				&\mathbb{T}J(t,\mu):=\inf_{\alpha\in L(\Omega,U)}\Bigg\{\E\bigg[f(\vartheta,\Law(\vartheta,\alpha),\alpha)+(b(\vartheta,\Law(\vartheta,\alpha),\alpha)-\langle\gamma(\vartheta,\Law(\vartheta,\alpha),\alpha,\cdot),\lambda\rangle)\pa_{\mu}J(t,\mu)(\vartheta)\\
				& +\frac12\tr\left(\sigma\sigma^{\T}(\vartheta,\Law(\vartheta,\alpha),\alpha)\pa_x\pa_\mu J(t,\mu)(\vartheta)\right)\bigg]\!\!+\!\!\int_Z \left(J(t,\Law(\vartheta+\gamma(\vartheta,\Law(\vartheta,\alpha),\alpha,z)))-J(t,\mu)\right)\lambda(\d z)\bigg\},
			\end{aligned}
		\end{equation*}
	}
	where the random variable $(\vartheta,I)\sim \mu\times {\rm U}(0,1)$, we denote by $L(\R\times[0,1];U)$ the set of measurable functions from $\R \times [0, 1]$ to $U$, and $L(\Omega, U)$ is denoted as the set of all measurable random variables valued in $U$.
\end{lemma}

\begin{remark}\label{joint_law}
	It is straightforward to verify that the operator $\mathbb{T}$ can be equivalently written by
	\begin{equation*}
		\begin{aligned}
			\mathbb{T}J(t,\mu)&=\inf_{\substack{\rho\in\Pc_2(\R^n\times U)\\ \rho_1=\mu}}\int_{\R^n\times U}\Big(f(x,\rho,u)+(b(x,\rho,u)-\langle\gamma(x,\rho,u,\cdot),\lambda\rangle\pa_{\mu}J(t,\mu)(x) \\
			&\left.+\frac12\tr\left(\sigma\sigma^{\T}(x,\rho,u)\pa_x\pa_{\mu}J(t,\mu)(x)\right)\right)\rho(\d x,\d u)+\int_Z\left(J(t,I^{\mu,\hat\alpha,z, *}\mu)-J(t,\mu)\right)\lambda(\d z),
		\end{aligned}
	\end{equation*}
	where $\rho_1\in\Pc_2(\R^n)$ stands for the first marginal of $\rho$ and $\hat\alpha$ is the regular conditional distribution of the second variable given the first variable (or more precisely, it is the Radon-Nikodym derivative of $\rho$ w.r.t. $\mu$ when $x\in\R^n$ is given).
\end{remark}

\subsection{Equivalence between the original problem and the controlled Fokker-Planck problem}
This section is devoted to showing $J_{ol}^*(t, \vartheta) = J_{fp}^*(t, \mu)$ and hence two value functions share the same HJB equation. 
\subsubsection{From the original problem to the controlled Fokker-Planck problem}\label{LCP}\label{sec:original-less-the controlled Fokker-Planck problem}
In this section, we prove the easier part $J_{fp}^*(t,\mu)\leq J^*_{ol}(t,\vartheta)$.  The proof is mainly based on the disintegration of the conditional joint law, and the fact that every open-loop control $\alpha \in \U$ in the original problem induces a kernel-valued control $\hat\alpha \in \widehat \U$.

\begin{lemma}\label{lem:inequality-from-op-fp} For every $\alpha \in \U$, there exists a kernel-valued control $\hat \alpha \in \widehat \U$ in the form \begin{equation}\label{eq:hatalpha}
		\hat\alpha_s(x)(\d u) := \Law(\alpha_s|\G_s,X_s=x)(\d u),
	\end{equation} such that $J_{ol}(t, \vartheta; \alpha) = J_{fp}(t, \mu; \hat\alpha)$. Consequently,  $J_{fp}^*(t,\mu)\leq J^*_{ol}(t,\vartheta)$.
\end{lemma}
\noindent{\it Proof.}\quad
Let $\mu_s$ be the regular conditional distribution of $X_s$ in \eqref{extended-MFC-dynamics} given $\G_s$ under $\alpha \in \U$. we first prove that $\mu_s$ satisfies a stochastic Fokker-Planck equation controlled by $\hat\alpha$. For any $\phi\in { C_b^2(\R^n)}$, applying It\^o's formula to $\phi(X_s)$ from $t$ to $s\in[t,T]$, and taking the conditional expectation on $\G_s$, we obtain
\begin{equation}\label{ito-g}
	\begin{aligned}
		&\E[\phi(X_s)|\G_s] - \E[\phi(X_{t})|\G_t]\\
		&\quad = \int_{t}^s \E\left[\left(b(X_r, \Law((X_r, \alpha_r)|\G_r), \alpha_r)\pa_x \phi(X_r)- \langle \gamma(X_r, \Law((X_r, \alpha_r)|\G_r),\alpha_{r} ,\cdot),\lambda \rangle\right) \pa_x \phi(X_r) \right.\\
		&\qquad\left. + \frac{1}{2} \tr\left[\sigma\sigma^{\T}(X_r,  \Law((X_r, \alpha_r)|\G_r), \alpha_r)  \pa_{xx}\phi(X_r)\right]\Big)\middle|\G_r\right]\d r \\
		& \qquad + \int_{0}^t \int_{Z} \E\left[\left(\phi(X_{r-} + \gamma(X_{r-},  \Law((X_{r-}, \alpha_{r-})|\G_r),\alpha_{r-}, z)) - \phi(X_{r-})\right)\middle|\G_{r-}\right] N(\d r, \d z).
	\end{aligned}
\end{equation}
Observing that the right-hand side of \eqref{ito-g} is expressed in terms of $ \Law((X_r, \alpha_r)|\G_r)$, we arrive at
\begin{equation*}
	\begin{aligned}
		& \E\left[\left(b(X_r, \Law((X_r, \alpha_r)|\G_r), \alpha_r)  \pa_x \phi(X_r)- \langle \gamma(X_r, \Law((X_r, \alpha_r)|\G_r), \cdot),  \lambda \rangle \right)\pa_x \phi(X_r) \right.\nonumber\\
		&\qquad\qquad\left. + \frac{1}{2} \tr\left(\sigma\sigma^{\T}(X_r,  \Law((X_r, \alpha_r)|\G_r), \alpha_r)  \pa_{xx}\phi(X_r)\right)\Big)\middle|\G_r\right]\\
		&\quad= \int_{\R^n \times U} \Big\{\left(b(x,\Law((X_r, \alpha_r)|\G_r), u)
		- \langle \gamma(x, \Law((X_r, \alpha_r)|\G_r), u,\cdot), \lambda\rangle\right)\pa_x\phi(x)\\
		&\qquad+ \frac{1}{2}\tr\left[\sigma\sigma^{\T}(x, \Law((X_r, \alpha_r)|\G_r), u)\pa_{xx}\phi(x)\right] \Big\}\Law((X_r, \alpha_r)|\G_r)(\d x, \d u),
	\end{aligned}
\end{equation*}
and
\begin{equation*}
	\begin{aligned}
		&\E\left[\left(\phi(X_{r-} + \gamma(X_{r-}, \Law((X_{r-}, \alpha_{r-})|\G_{r-}), \alpha_{r-},z)) - \phi(X_{r-})\right)\middle|\G_{r-}\right]\\
		&\qquad= \int_{\R^n \times U}\left(\phi(x + \gamma(x, \Law((X_{r-}, \alpha_{r-})|\G_{r-}),u, z)) - \phi(x)\right)\Law((X_{r-}, \alpha_{r-})|\G_{r-})(\d x,\d u),
	\end{aligned}
\end{equation*}
where we recall $\langle\gamma(x,\mu_r\cdot\hat\alpha_r,u,\cdot),\lambda\rangle=\int_Z\gamma(x,\mu_r\cdot\hat\alpha_r,u,\cdot)\lambda(\d z)$. 
It results from the Bayes' formula that
\begin{equation}\label{eq:Bayesfor}
	\Law((X_s, \alpha_s)|\G_s)(\d x,\d u) = \mu_s(\d x) \cdot \hat\alpha_s(x)(\d u).
\end{equation}
In view of \equref{aggregate}, we have for $r\in[t,s]$ and $\phi\in C_b^2(\R^n)$,
\begin{equation*}
	\begin{aligned}
		&\E\left[\left(b(X_r, \Law((X_r, \alpha_r)|\G_r), \alpha_r)  \pa_x \phi(X_r)- \langle \gamma(X_r, \Law((X_r, \alpha_r)|\G_r), \cdot),  \lambda \rangle\right) \pa_x\phi(X_r) \right.\\
		&\qquad\left. + \frac{1}{2} \tr\left[\sigma\sigma^{\T}(X_r,  \Law((X_r, \alpha_r)|\G_r), \alpha_r)  \pa_{xx}\phi(X_r)\right]\Big)\middle|\G_r\right]\\
		&\quad = \int_{\R^n \times U} \Big\{\big(b(x,\Law((X_r, \alpha_r)|\G_r), u)
		- \langle \gamma(x, \Law((X_r, \alpha_r)|\G_r), u,\cdot), \lambda\rangle\big)\pa_x\phi(x)\\
		&\qquad + \frac{1}{2}\tr\left[\sigma\sigma^{\T}(x, \Law((X_r, \alpha_r)|\G_r), u)\pa_{xx}\phi(x)\right] \Big\}\Law((X_r, \alpha_r)|\G_r)(\d x, \d {u})\\
		&\quad=\int_{\R^n} \left\{\left(\hat b(x,\mu_s,\hat \alpha_s)-\langle\hat\gamma(x,\mu_s,\hat\alpha_s,\cdot),\lambda\rangle\right)\pa_x\phi(x)+\frac12\tr\left[\hat\sigma\hat\sigma^{\T}(x,\mu_r,\hat\alpha_r)\pa_{xx}\phi(x)\right] \right\}\mu_r(\d x)\\
		&\quad=\langle \phi,A_0^{\hat\alpha}\mu_r\rangle,
	\end{aligned}
\end{equation*}
as well as
\begin{equation*}
	\begin{aligned}
		&\E\left[\left(\phi(X_{s-} + \gamma(X_{r-}, \Law((X_{r-}, \alpha_{r-})|\G_{r-}), \alpha_{r-},z)) - \phi(X_{r-})\right)\middle|\G_{r-}\right]\\
		&\quad = \int_{\R^n \times U} \left(\phi(x + \gamma(x, \Law((X_{r-}, \alpha_{r-})|\G_{r-}),u, z)) - \phi(x)\right)\Law((X_{r-}, \alpha_{r-})|\G_{r-})(\d x, \d u)\\
		&\quad = \int_{\R^n} \left(I^{\mu_{r-},\hat\alpha_{r-},z}(\phi)(x)-\phi(x)\right)\mu_{r-}(\d x) =\langle \phi,I^{\mu_{r-},\hat\alpha_{r-}, z, *}\mu_{r-}-\mu_{r-}\rangle=\langle \phi,A_1^{\hat\alpha}\mu_{r-}\rangle.
	\end{aligned}
\end{equation*}
Furthermore, by tower property, for $(t,\vartheta,\alpha)=[0,T]\times L^2(\Omega;\R^n)\times\U$, we have
\begin{equation}\label{value-reformulation}
	\begin{aligned}
		J_{ol}(t, \vartheta; \alpha)
		&= \E\left[\int_t^T \E\left[f(X_s,\Law((X_s, \alpha_s)|\G_s), \alpha_s)\middle|\G_s\right]\d s + \E\left[g(X_T, \Law(X_T|\G_T))\middle|\G_T\right]\right] \\
		&= \E\left[\int_t^T \hat f\big(\mu_s, \hat\alpha_s\big)ds + \hat g(\mu_T)\right] = J_{fp}(t, \mu; \hat\alpha).
	\end{aligned}
\end{equation}
The desired conclusion holds.
\hfill$\Box$

\subsubsection{From the controlled Fokker-Planck problem to the original problem}\label{sec:the controlled Fokker-Planck problem-less-original}
In the previous subsection, we can construct a new controlled  Fokker-Planck problem from an original problem. However, for each kernel-valued control $\hat\alpha \in \widehat \U$, it is challenging to find a corresponding open-loop control in $\U$. Our strategy is to focus on feedback kernel-valued controls and to show that for each near optimal feedback kernel-valued control, there exists an associated piecewise constant control that yields the reverse inequality $J^*_{ol}(t, \vartheta) \leq J^*_{fp}(t, \mu)$. To this end, we will impose the next assumptions.
\begin{ass}[Regularity]\label{Smoothness}
	There exists a classical solution $\hat{J}\in C^{1;1,1}([0,T]\times \Pc_2(\R^n))$ to the HJB equation $\pa_t J(t,\mu)+\mathcal{T}J(t,\mu)=0$ with $J(T,\mu)=\hat g(\mu)$.
\end{ass}

We also make the following assumption:
\begin{ass}[Measurability]\label{measurable}
	For any $\epsilon>0$, there exists a  $\pi_{\epsilon}(t, \mu)(\cdot)$ is an $\epsilon/T$-optimal control of $\hat J$ for some measurable mapping $\pi_\epsilon: [0, T] \times \mathcal{P}(\R^n) \to \hat\U(\R^n)$ in the sense that $\mathcal{T}^{\pi_{\epsilon}(t, \mu)}\hat J(t,\mu)<\mathcal{T}\hat J(t,\mu)+\epsilon/T$ for all $(t,\mu)\in[0,T]\times\Pc_2(\R^n)$.
\end{ass}

When DPP for the  controlled Fokker-Planck problem holds and the new value function $J_{fp}^*$ is smooth in the sense that $J_{fp}^*\in C^{1;1,1}([0,T]\times\Pc_2(\R^n))$, \assref{Smoothness} is automatically fulfilled.  \assref{measurable} holds if there exists a measurable mapping  $\pi^*$$[0, T] \times \mathcal{P}(\R^n) \to \hat\U(\R^n)$ attaining the infimum in the sense that $\mathcal{T} \hat J(t, \mu) = \mathcal{T}^{\pi^*(t, \mu)} \hat J(t, \mu)$. It is easy to verify that the LQ-type MFC problem satisfies both \assref{Smoothness} and \assref{measurable} (c.f. Section~\ref{sec:example}). As a consequence, we obtain the verification theorem for the controlled Fokker-Planck problem.

\begin{prop} \label{thm:verification} Under {\rm \assref{Smoothness}} and {\rm \assref{measurable}}, the control in feedback form $(\pi_{s, \epsilon})_s = (\pi_\epsilon(s, \mu_s))_s \in \widehat\U$ is an $\epsilon$-optimal control of the controlled Fokker-Planck problem, i.e. $J_{fp}(t, \mu; \pi_\epsilon) \leq J_{fp}^*(t, \mu) + \epsilon$.
\end{prop}
\noindent{\it Proof.}\quad
Under \assref{Smoothness}, by applying It\^o's rule to $\hat J(t,\mu_t)$, where $(\mu_s)_{s\in[t,T]}$ satisfies the controlled dynamics \eqref{equ:extended-FP-mu_t} with any $\hat\alpha\in\widehat\U$ , one can easily conclude that \begin{equation}\label{eq:valueineq}
	\hat J(t,\mu)\leq J_{fp}^*(t,\mu),\quad \forall (t,\mu)\in [0,T]\times\Pc_2(\R^{n}).
\end{equation}
On the other hand,  it follows from It\^o's rule  to $\hat J(t, \mu_t)$, where $\mu_t$ satisfies \eqref{equ:extended-FP-mu_t} under the feedback kernel-valued control $\pi_{\epsilon}$
\begin{equation}\label{eq:itoformula}
	\begin{aligned}
		&\E\left[\hat J(T,\mu_T)-\hat J(t,\mu)\right]=\E\left[\int_t^T\left(\pa_s\hat J(s,\mu_s)+\mathcal{T}^{\pi_{\epsilon}(s, \mu_s)}\hat J(s,\mu_s)-\hat f(s, \mu_s,\pi_{\epsilon}(s, \mu_s))\right)\d s\right]\\
		&=\E\left[\int_t^T\left(\pa_s\hat J(s,\mu_s)+\mathcal{T}\hat J(s,\mu_s)\right)\d s\!\!+\!\!\int_t^T\left(\mathcal{T}^{\pi_{\epsilon}(s, \mu_s)}\hat J(s,\mu_s)-\mathcal{T}\hat J(s,\mu_s)\right)\d s\!\!-\!\!\int_t^T\hat f(\mu_s,\pi_{\epsilon}(s, \mu_s))\d s\right]\\
		&<\epsilon+\E\left[J_{fp}(T,\mu_T; \pi_{\epsilon})\right]-J_{fp}(t,\mu; \pi_{\epsilon}),
	\end{aligned}
\end{equation}
where in the last inequality we have exploited the fact that
\begin{equation*}
	J_{fp}(t,\mu; \pi_{\epsilon})
	=\E\left[\int_t^T\hat f(\mu_s,\pi_{\epsilon}(s, \mu_s))\d s+J_{fp}(T,\mu_T; \pi_{\epsilon})\right].
\end{equation*}
By noting $J_{fp}(T,\mu_T; \pi_{\epsilon})=\hat g(\mu_T)=\hat J(T,\mu_T)$, we have $J_{fp}(t, \mu; \pi_{\epsilon}) < \hat J(t, \mu) + \epsilon$ by \eqref{eq:itoformula}.
\hfill $\Box$

\medskip 

Thanks to Theorem \ref{thm:verification}, we focus on a subset of $\widehat \U$ that consists of all feedback controls $\pi=(\pi_t)_{t\in[0,T]}$ with $\pi_t=\pi(t,\mu_t)$ for some measurable mapping $\pi: [0, T] \times \mathcal{P}(\R^n) \to \U(\R^n)$. There may be some confusion between the control process $\pi=(\pi_t)_{t\in[0,T]}$ and { the merely measurable} mapping $\pi:[0, T] \times \Pc_2(\R^n)\mapsto\hat \U(\R^n)$, but readers can easily tell apart when we use this notation. Indeed, for these feedback controls in $\widehat \U$, we first borrow the idea from Motte and Pham \cite{Pham} to introduce the lifted randomized feedback policy.
	
	{\begin{definition}[Ideal Lifted Randomized Feedback Policy] \label{def:ideal_randomized_feedback} A feedback kernel-valued control $\pi\in L([0, T] \times \Pc_2(\R^n);\hat \U(\R^n))$ is called an ideal lifted randomized feedback policy if there exists a measurable mapping $a\in L([0, T] \times \Pc_2(\R^n)\times\R^n\times[0,1];U)$, such that for all $(t, \mu)\in [0, T] \times\Pc_2(\R^n)$ $a(t, \vartheta, \mu, I_t) \sim \pi(t, \mu)$ with $(I_t)_{t \in [0, T]}$ a continuum of i.i.d. uniform random variables, independent of $\vartheta \sim \mu$.
		\end{definition}
		
		By Definition ~\ref{def:ideal_randomized_feedback}, if $\pi$ is an ideal lifted randomized feedback policy, then $a(s, X_s, \mu_s, I_s)$, $t \leq s \leq T$ is an open-loop control in $\U$. In this case, we have $J_{fp}(t, \mu; \pi) = J_{ol}(t, \vartheta; a)$.  However, the ideal lifted randomized policy in Definition \ref{def:ideal_randomized_feedback} requires uncountably-many independent random variables $\{I_s\}_{t \leq s \leq T}$. Corollary 4.3 in Sun \cite{Sun2006} points out that when the random variables $\{I_s\}_{s \in [t, T]}$ are  pairwise independent, the sample path $s \mapsto I_s$ fails to be Lebesgue measurable. This implies the integral $\int_t^T f(s, X_s, \mu_s, a(s, X_s, \mu_s, I_s)))\d s$  involved in the original problem is ill-defined under standard Lebesgue framework, which raises some measure-theoretical issues.  While Sun \cite{Sun2006} solves this issue by extending the Lebesgue measure (in $s \in [t, T]$) via a Fubini extension so as to accommodate uncountable pairwise independent random variables, this approach relies on a nonstandard measure-theoretical framework, under which all (stochastic) integrals should be redefined. As a result, Sun \cite{Sun2006}'s Fubini extension method is not suitable for the classical stochastic control framework. Motivated by \cite{Jiazhou2023, Jiaetal2025} in a RL context, we can instead associate the feedback kernel-valued control with a sequence of piecewise constant controls, which only require countable independent random variables and lead to the following definition.}
	
	\begin{definition}[Lifted Randomized Feedback Policy]\label{randomized_feedback}
		A policy $\pi\in L([0, T] \times \Pc_2(\R^n);\hat \U(\R^n))$ is called a lifted randomized feedback policy if there exists a measurable mapping $a\in L([0, T] \times \Pc_2(\R^n)\times\R^n\times[0,1];U)$, such that for any discretized time grid $\mathcal{D} = \{0 = t_0 < \cdots < t_K = T\}$, $a(t_k,\mu,\vartheta,I_k)\sim\pi(t_k, \mu)$, for all $(t_k, \mu)\in \mathcal{D} \times\Pc_2(\R^n)$ with $(I_k)_{k=0}^K$ a sequence of i.i.d. uniform random variables, independent of $\vartheta \sim \mu$.
	\end{definition}


	Using {\rm \defref{randomized_feedback}}, we aim to show in the next result ({\rm \propref{prop:diff-ol-fp}}) that if an $\epsilon/T$-optimal control $\pi_\epsilon$ for the controlled Fokker-Planck problem is given in the form of  a lifted randomized feedback policy, then there exists a sequence of piecewise constant open-loop controls such that the corresponding cost functionals of the original control problem approximate that of the controlled Fokker–Planck problem arbitrarily well. {Our proof of {\rm \propref{prop:diff-ol-fp}} is based on a PDE approach, similar to that developed in Theorem 4.1 in Jia et al. \cite{Jiaetal2025}. As a consequence, additional regularity conditions are required, as specified in the following assumption.
	}
	
	\begin{ass}\label{ass:regularity-Jh} {Let $\pi_\epsilon$ be $\epsilon/T$-optimal policy given in {\rm \assref{measurable}}, and assume further that it is also lifted randomized feedback policy in Definition \ref{randomized_feedback}.} 
		\begin{itemize}
			\item [{\rm (i)}] $\hat f(\cdot, \cdot, \pi_\epsilon(\cdot, \cdot)) \in C^{1;1, 1}(\mathcal{P}_2(\R^n))$ and $\hat g \in C^{1, 1}(\mathcal{P}(\R^n))$, with $\hat f$ and $\hat g$ defined in \eqref{running-cost}.
			\item[{\rm(ii)}]
			$b$, $\sigma$, $\gamma$ and $\lambda$ are sufficiently regular, such that for each $h(\cdot) \in \{\hat f(\cdot,\pi_{\epsilon}(\cdot,\cdot)), \hat g(\cdot)\}$ and every $t' \in [0, T]$, the PDE 
			\begin{equation}\label{PDE:Jh}
				\pa_t J(t,\mu)+{\rm B}^{\pi_\epsilon(t, \mu)}J(t,\mu)=0, t \in [0, t'], \;
				J(t', \mu) = h(t', \mu)
			\end{equation}
			has a classical solution $J_{h} \in C^{1, 1}([0, t'] \times \mathcal{P}(\R^n))$ satisfying
			\begin{equation*}
				\begin{aligned}
					&|J_h(t, \mu) - J_h(t, \mu')| + |\pa_t J_h(t, \mu) - \pa_t J_h(t, \mu')| \leq C\big({\mathcal{W}_{2;\R^n}(\mu,\mu')^2} + |x- x'|^2\big),\\
					&|\pa_\mu J_h(t, \mu)(x) - \pa_\mu J(t, \mu')(x')|  + |\pa_x\pa_\mu(t, \mu)(x) - \pa_x\pa_\mu(t, \mu')(x')| \leq  C\big({\mathcal{W}_{2;\R^n}(\mu,\mu')} + |x- x'|\big).
				\end{aligned}
			\end{equation*}
			Here, the operator  ${\rm B}^{\hat\alpha}$ is defined on $C^{1; 1, 1}([0, T] \times \mathcal{P}( \R^n))$ by 
			$B^{\hat\alpha} J(t, \mu) = \mathcal{T}^{\hat\alpha} J(t, \mu) - \hat f(t, \mu, \hat\alpha)$, where the operator $\mathcal{T}^{\hat\alpha}$ is given in \eqref{operator-T-hatalpha}.
		\end{itemize}
	\end{ass}
	
	{
		For LQ type MFC problem (c.f. Section~\ref{sec:example}), the optimal feedback policy $\pi^*$ exists and is given by a Dirac measure $\delta_{\alpha^*(t, x, \mu)}$. Consequently, \assref{ass:regularity-Jh} is satisfied by taking $\pi_\epsilon$ as $\pi^*$. In general, (ii) in \assref{ass:regularity-Jh} corresponds to a linear PDE on the Wasserstein space and admits a Feynman-Kac representation. It is expected that a strategy based on Malliavin calculus, similar to that of Buckdahn et al. \cite{BLPR2017} can be adapted to ensure regularity of $J_h$ in our setting with Poissonian common noise, provided that the near-optimal policy $\pi_\epsilon$ and all coefficients are sufficiently regular. In addition, under {\rm \assref{Smoothness}}, {\rm \assref{measurable}}, and under some structural assumption on the operator $\mathcal{T}$, the near-optimal policy $\pi_\epsilon$ inherits the regularity from the classical solution of HJB equation in {\rm \assref{Smoothness}}.
	}
	\begin{prop}\label{prop:diff-ol-fp}
		Let {\rm \assref{ass}}, {\rm \assref{measurable}} and {\rm \assref{ass:regularity-Jh}} hold. For any $\epsilon >0$,  assume that $\pi^\epsilon$ is a $\epsilon/T$-optimal lifted randomized feedback policy. Then there exists a time grid $\mathcal{D}$ of $[t, T]$ with mesh size $K$ and a piecewise constant control $$a^{K, \epsilon}_t = \sum_{k=0}^{K-1} a^\epsilon(t_k, X_{t_k}^{\mathcal{D}}, \Law(X_{t_k}^{\mathcal{D}}|\G_{t_k}), I_k) {\bf 1}_{t \in [t_k, t_{k+1}]} \in \U$$ such that $|J_{fp}(t, \mu; \pi_\epsilon) - J_{ol}(t, \vartheta; a^{K, \epsilon})| < \epsilon$. Here $X=(X_t^{\mathcal{D}})_{t\in[0,T]}$ is the state process in \eqref{extended-MFC-dynamics} under $a^{K, \epsilon}$ and for any $k=0,1,\ldots,K$, $I_k$ is independent of $\vartheta$, $W$, $N$, $(I_l)_{l=0}^{k-1}$.
	\end{prop}
	The proof of {\rm \propref{prop:diff-ol-fp}} is reported in Appendix \ref{sec:appendix-B}.
	
	\medskip 
	
	Combining Lemma \ref{lem:inequality-from-op-fp}, Proposition \ref{thm:verification} and Proposition \ref{prop:diff-ol-fp}, we obtain the main result of this section.

	\begin{theorem}\label{equivalencevalue}
		Let {\rm \assref{ass}},  {\rm \assref{Smoothness}}, {\rm \assref{measurable}},  and {\rm \assref{ass:regularity-Jh}} hold. Assume further that $\pi_\epsilon$ in {\rm \assref{measurable}} is a lifted randomized policy for any $\epsilon >0$. 
			Then,  we have $J_{ol}^*(t,\vartheta)=J_{fp}^*(t,\mu) = \hat J(t, \mu)$. Moreover, the feedback kernel-valued control $\pi_{t, \epsilon} \in \widehat\U$ defined by $\pi_{s, \epsilon} = \pi_\epsilon(s, \mu_s)$, $s \in [t, T]$ is also an $\epsilon$-optimal control for ${J}_{fp}$. And there exists a time grid $\mathcal{D}$ of $[t, T]$ with size $K$ such that the piecewise constant control $\alpha^{\epsilon, K}_s= \sum_{k=0}^Ka^{\epsilon}(t_k, X_{t_k}^{\mathcal{D}}, \Law(X_{t_k}^{\mathcal{D}}|\G_{t_k}),I_k) {\bf 1}_{s \in [t_k, t_{k+1}]} \in \U$ is an $2\epsilon$-optimal control for $J_{ol}$ (here $X^{\mathcal{D}}=(X_t^{\mathcal{D}})_{t\in[0,T]} $ is the dynamics controlled by $\alpha^{\epsilon, K}$). 
		\end{theorem}

		\begin{remark}\label{vlsequivalence}
			As a byproduct of \thmref{equivalencevalue}, the conditional law invariance holds for the original MFC problem, and hence we can denote by $J^*_{ol}(t,\mu)$ the original value function instead of $J^*_{ol}(t,\vartheta)$ and it holds that $\pa_t J^*(t,\mu)+\mathbb{T} J^*(t,\mu)=0$.
			
		\end{remark}

		\subsection{Relationship between HJB equation and SMP}
		
		In this subsection, we investigate the connection of the HJB equation to the BSDE induced by the SMP. A simple calculation, together with \ref{measure_shift}, { yields the next result}, whose proof is omitted.
		\begin{lemma}\label{newLdif}
			For any $\rho\in\Pc_2(\R^n\times U)$, $z\in Z$ and $L\in C^{1;1}(\Pc_2(\R^n))$, let $(X,\alpha)$ be an $\R^n\times U$-valued random variable defined on some probability space such that $\mu=\Law(X)$, $\hat\alpha(x)(\d u)=\Law(\alpha|X=x)(\d u)$ and $\rho=\Law(X,\alpha)$. Define $L_1:\Pc_2(\R^n\times U)\to\R$ as $L_1(\rho):=L(I^{\mu,\hat\alpha,z, *}\mu)$. Then, $\pa_{\mu}L_1$ (recall \defref{Ldif}) exists, and for all $(x,u)\in\R^n\times U$,
			\begin{equation*}
				\pa_{\mu}L_1(\rho)(x,u)=\pa_{\mu}L(I^{\mu,\hat{\alpha},z, *}\mu)(x+\gamma(x,\rho,u,z))\left\{1+\pa_x\gamma(x,\rho,u,z)+\E\left[\pa_{\mu}\gamma(X,\rho,\alpha,z)(x,u)\right]\right\}.
			\end{equation*}
		\end{lemma}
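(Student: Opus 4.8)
The plan is to reduce the claim, which concerns the $L$-derivative (equivalently, the $x$-gradient of the linear derivative) of the composite functional $L_1(\rho)=L(I^{\mu,\hat\alpha,z,*}\mu)$, to an application of the chain rule for linear derivatives on $\Pc_2$, combined with the push-forward identity for $I^{\mu,\hat\alpha,z,*}$ established in \lemref{measure_shift}. First I would use \lemref{measure_shift}, which says $I^{\mu,\hat\alpha,z,*}\mu=\Law\big(h^{\rho,\alpha,z}(X)\big)$ with $h^{\rho,u,z}(x)=x+\gamma(x,\rho,u,z)$; so $L_1(\rho)=L\big(\Law(h^{\rho,\alpha,z}(X))\big)$ where the dependence on $\rho$ enters both through the measure $\mu=\Law(X)$ being pushed forward and through the coefficient $\gamma(\cdot,\rho,\cdot,z)$ appearing inside $h$. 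Note that, when we perturb $\rho$ to $\rho'=\Law(X',\alpha')$ and interpolate $\rho^\lambda=\rho+\lambda(\rho'-\rho)$, the conditional-law structure $\mu^\lambda=\Law(X^\lambda)$, $\hat\alpha^\lambda=\Law(\alpha^\lambda\,|\,X^\lambda=\cdot)$ is handled via a coupling $(X^\lambda,\alpha^\lambda)$ with law $\rho^\lambda$ on a common probability space.

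The key computation is to expand $L_1(\rho')-L_1(\rho)$ to first order. Writing $\nu^\lambda:=I^{\mu^\lambda,\hat\alpha^\lambda,z,*}\mu^\lambda=\Law\big(h^{\rho^\lambda,\alpha^\lambda,z}(X^\lambda)\big)$, the definition of the linear derivative of $L$ on $\Pc_2(\R^n)$ gives
\begin{align*}
L_1(\rho')-L_1(\rho)=\int_0^1\int_{\R^n}\frac{\delta L}{\delta m}\big(\nu^\lambda\big)(y)\,\big(\nu^{\rho'}-\nu^\rho\big)(\mathrm dy)\,\d\lambda,
\end{align*}
but since the perturbation acts on $\nu$ through the map $\rho\mapsto h^{\rho,\alpha,z}(X)$, I would instead differentiate $\lambda\mapsto L(\Law(h^{\rho^\lambda,\alpha^\lambda,z}(X^\lambda)))$ directly, using the $L$-differentiability of $L$ (i.e.\ $\pa_\mu L=\pa_x\frac{\delta L}{\delta m}$) and the product/chain rule for the argument $h^{\rho^\lambda,\alpha^\lambda,z}(X^\lambda)$. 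Differentiating the inner argument at $\lambda=0$ produces three contributions: the variation of $X^\lambda$ itself (giving the factor $1$, since $\pa_x h=1+\pa_x\gamma$ acts on the identity part), the variation of the $\gamma(x,\rho,\cdot,z)$ coefficient through its direct $x$-dependence inside $h$ (giving $\pa_x\gamma(x,\rho,u,z)$), and the variation of $\gamma$ through its own measure argument $\rho$ (giving the term $\E[\pa_\mu\gamma(X,\rho,\alpha,z)(x,u)]$ after integrating against the perturbation). Collecting these and evaluating $\pa_\mu L$ at the shifted point $x+\gamma(x,\rho,u,z)$, together with the fixed $(x,u)$ coming from the point at which we read off $\pa_\mu L_1$, yields exactly the claimed formula. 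One then checks continuity and the quadratic growth bound of \defref{def:derivativelinear} using (A.2)–(A.4) — $\pa_x\gamma$ and $\pa_\mu\gamma$ are bounded and continuous, and $\pa_\mu L$ has the required growth since $L\in C^{1;1}(\Pc_2(\R^n))$.

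The main obstacle I anticipate is the careful bookkeeping of the two distinct roles played by $\rho$ inside $L_1$: it is simultaneously the measure that is transported (entering as $\mu=\Law(X)$) and a parameter of the transport map (entering as $\gamma(\cdot,\rho,\cdot,z)$), so a naive application of a single chain rule double-counts or misses a term. The clean way to handle this is to treat $\rho$ as two independent copies of the variable — freeze one inside $\gamma$ while differentiating the other in the push-forward, then symmetrize — exactly the device used elsewhere in the paper via the copy space $(\Omega',\F',\Pb')$ and the expectation $\E'$; the term $\E[\pa_\mu\gamma(X,\rho,\alpha,z)(x,u)]$ is precisely the contribution of differentiating the frozen copy. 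A secondary technical point is justifying the interchange of $\int_0^1$, the $L$-derivative limit, and the expectation, which follows from dominated convergence using the uniform boundedness of $\nabla_x\gamma$, the continuity of $\pa_\mu L$ and $\pa_\mu\gamma$, and square-integrability of $(X,\alpha)$; since the statement only asserts existence and the formula for $\pa_\mu L_1$ (with the growth/continuity conditions), this is routine given \assref{ass}, and the paper indeed omits the proof for this reason.
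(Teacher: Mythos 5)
Your approach is exactly the one the paper intends: it presents this lemma as "a simple calculation, together with \lemref{measure_shift}," and omits the proof, so the route you describe — rewrite $L_1(\rho)=L(\Law(h^{\rho,\alpha,z}(X)))$ via the push-forward identity of \lemref{measure_shift}, then differentiate along $\rho^\lambda=\rho+\lambda(\rho'-\rho)$ keeping separate track of the two roles of $\rho$ (the measure being transported versus the parameter inside $\gamma$) — is the intended argument, and your identification of the three first-order contributions is correct.

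One point deserves care when you "collect the terms": the honest chain rule places the factor $\pa_\mu L$ \emph{inside} the expectation in the third contribution, evaluated at the random shifted point, i.e. it produces
\begin{align*}
\pa_\mu L\big(I^{\mu,\hat\alpha,z,*}\mu\big)\big(x+\gamma(x,\rho,u,z)\big)\big(1+\pa_x\gamma(x,\rho,u,z)\big)
+\E\Big[\pa_\mu L\big(I^{\mu,\hat\alpha,z,*}\mu\big)\big(X+\gamma(X,\rho,\alpha,z)\big)\,\pa_\mu\gamma(X,\rho,\alpha,z)(x,u)\Big],
\end{align*}
whereas the displayed formula in the lemma factors $\pa_\mu L(I^{\mu,\hat\alpha,z,*}\mu)(x+\gamma(x,\rho,u,z))$ out of all three terms, including the expectation. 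Your write-up asserts that collecting the contributions "yields exactly the claimed formula" without addressing this mismatch; you should either derive the expression above and note that the lemma's display is a compact (arguably abusive) rewriting of it, or flag the discrepancy explicitly, since silently identifying the two would be an error in a fully written proof. The rest — continuity, the growth bound via (A.2)--(A.4), and the dominated-convergence justification for interchanging limits — is routine as you say.
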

		
		The next theorem is the main result of this subsection.
		\begin{theorem}\label{smp_HJB}
			Let both \assref{Smoothness} and \assref{measurable} hold. Suppose that the original extended MFC problem \eqref{eq:svalue1}-\eqref{eq:seq1} has an optimal control $\alpha^*=(\alpha_t^*)_{t\in[0,T]}\in\U$, and let $X^*=(X_t^*)_{t\in[0,T]}$ be the resulting state process. Define $\mu_t^*=\Law(X_t^*|\G_t)$ and $\rho_t^*(\cdot)=\Law((X_t^*,\alpha_t^*)|\G_t)$ for $t\in[0,T]$. Then, by \thmref{equivalencevalue},  let $J^*\in C^{1;2,1}([0,T]\times\Pc_2(\R^n))$ solve the following PDE, $\d t\times\d\Pb\text{-}\as$,
			\begin{equation}\label{eq:HJB_J}
				\begin{aligned}
					0&=\int_{\R^n\times U}\Big(f(x,\rho_t^*,u)+\Big(b(x,\rho_t^*,u)-\langle\gamma(x,\rho_t^*,u,\cdot),\lambda\rangle\pa_{\mu}J(t,\mu_t^*)(x)\\
					&\quad+\frac{1}{2}\tr\left(\sigma\sigma^{\T}(x,\rho_t^*,u)\pa_x\pa_{\mu}J(t,\mu_t^*)(x)\right)\Big)\bigg)\rho_t^*(\d x,\d u) \\
					&\quad+\int_Z\left(J(t,I^{\mu_t^*,\hat{\alpha}_t^{*},z,*}\mu_t^*)-J(t,\mu_t^*)\right)\lambda(\d z)+\pa_tJ^*(t,\mu_t^*),
				\end{aligned}
			\end{equation}
			where $\hat\alpha_t^*(x)(\d u):=\Law(\alpha_t^*|\G_t,X_t^*=x)(\d u)$. Moreover, consider the processes defined by
			\begin{equation*}
				\begin{cases}
					\displaystyle p_t=\pa_{\mu}J^*(t,\mu^*_t)(X^*_t),\quad P_t=\pa_x\pa_{\mu}J^*(t,\mu_t^*)(X_t^*)\sigma(X_t^*,\rho_t^*,\alpha^*_t),\\
					\displaystyle K_t=\pa_{\mu}J^*(t,I^{\mu_t^*,\hat{\alpha}_t^{*},z,*}\mu_t^*)(X_t^*+\gamma(X_t^*,\mu_t^*,\alpha^*_t,z))-\pa_{\mu}J^*(t,\mu_t^*)(X_t^*).
				\end{cases}
			\end{equation*}
			The triplet $(p,P,K)=(p_t,P_t,K_t)_{t\in[0,T]}$ is the unique solution to the BSDE:
			\begin{equation*}
				\begin{cases}
					\displaystyle \d p_t=-\left\{\pa_xH(X_t^*,\alpha_t^*,\rho_t^*,p_t,P_t,K_t)+\E'\left[\pa_{x'}\delta H(X_t^{*'},\alpha_t^{*'},\rho_t^*,X_t^*,\alpha_t^*,p_t',P_t',K_t')\right] \right\}\d t\\
					\displaystyle\qquad\qquad+P_t\d W_t+\int_ZK_{t-}\tilde{N}(\d t,\d z),\\
					\displaystyle p_T=\pa_xg(X_T^*,\mu_T^*)+{\E'\left[\partial_{\mu} g({X_T^*}', \mu_T^*)(X_T^*)\right]}.
				\end{cases}
			\end{equation*}
		\end{theorem}
		
		\noindent{\it Proof.}\quad
		The first assertion follows directly from \lemref{lemma1} and \remref{joint_law}. We next verify the second assertion. Note that $J^*(T,\mu)=\hat g(\mu)=\int_{\R^n}g(x,\mu)\mu(\d x)$, and hence $p_T=\pa_{\mu}J^*(T,\mu_T^*)(X_T^*)=\pa_xg(X_T^*,\mu_T^*)+\E'\left[\pa_{\mu}g(X_T^{*'},\mu_T^*)(X_T^*)\right]$. As a result, the terminal condition is fulfilled.
		Differentiating on both sides of \equref{eq:HJB_J} with respect to $\mu$ and evaluating at $(t,X_t^*,\mu_t^*)$, one can conclude by \lemref{newLdif} that
		\begin{equation*}
			\begin{aligned}
				0&=\pa_t\pa_{\mu}J^*(t,\mu_t^*,X_t^*)+\pa_xb(X_t^*,\rho_t^*,\alpha^*_t)p_t+b(X_t^*,\rho_t^*,\alpha^*_t)\pa_x\pa_{\mu}J^*(t,\mu_t^*)(X_t^*)\\
				&\quad +\E'\left[\pa_{\mu}b(X_t^{*'},\rho_t^*,\alpha^{*'}_t)(X_t^*,\alpha_t^*)p_t'+b(X_t^{*'},\rho_t^*,\alpha^{*'}_t)\pa_{\mu}\pa_{\mu}J^*(t,\mu_t^*)(X_t^*,X_t^{*'}) \right]\\
				&\quad +\tr\left(P_t\pa_x\sigma(X_t^*,\rho_t^*,\alpha^*_t)^{\T}+\frac{1}{2}\sigma\sigma^{\T}(X_t^*,\rho_t^*,\alpha^*_t)\pa_{xx}\pa_{\mu}J^*(t,\mu_t^*)(X_t^*)\right)\\
				&\quad+\E'\left[\tr\left(P_t'\pa_{\mu}\sigma(X_t^{*'},\rho_t^*,\alpha^{*'}_t)(X_t^*,\alpha_t^*) +\frac12\sigma\sigma^{\T}(X_t^{*'},\rho_t^*,\alpha^{*'}_t)\pa_{\mu}\pa_x\pa_{\mu}J^*(t,\mu_t^*)(X_t^*,X_t^{*'})\right)\right]\\
				&\quad+\pa_x f(X_t^*,\rho_t^*,\alpha^*_t)+\E'\left[\pa_{\mu}f(X_t^{*'},\rho_t^*,\alpha^{*'}_t)(X_t^*,\alpha_t^*)\right]-\pa_{\mu}J^*(t,\mu_t^*)(X_t^*)\\
				&\quad+\int_Z\pa_{\mu}J^*(t,I^{\mu_t^*,\hat{\alpha}_t^*,z,*}\mu_t^*)(X_t^*+\gamma(X_t^*,\mu_t^*,\alpha^*_t,z))(1+\pa_x\gamma(X_t^*,\rho_t^*,\alpha_t^*,z))\lambda(\d z)\\
				&\quad+\int_Z\E'\left[\pa_{\mu}J^*(t,I^{\mu_t^*,\hat{\alpha}_t^*,z,*}\mu_t^*,X_t^{*'}+\gamma(X_t^{*'},\mu_t^*,\alpha^{*'}_t,z))\pa_{\mu}\gamma(X_t^{*'},\rho_t^*,\alpha_t^{*'},z)(X_t^*,\alpha_t^*)\right]\lambda(\d z)\\
				&\quad-\int_Z\left(\pa_x\gamma(X_t^*,\rho_t^*,\alpha_t^*,z)p_t+\gamma(X_t^*,\rho_t^*,\alpha_t^*,z)\pa_x\pa_{\mu}J^*(t,\mu_t^*)(X_t^*)\right)\lambda(\d z)\\
				&\quad-\E'\left[\int_Z\left(\pa_{\mu}\gamma(X_t^{*'},\rho_t^*,\alpha_t^{*'},z)(X_t^*,\alpha_t^*)p_t'+\gamma(X_t^{*'},\rho_t^*,\alpha_t^{*'},z)\pa_{\mu}\pa_{\mu}J^*(t,\mu_t^*)(X_t^*,X_t^{*'})\right)\lambda(\d z) \right].
			\end{aligned}
		\end{equation*}
		On the other hand, applying It\^{o}'s formula (c.f. Theorem 2.7 in Guo and Zhang~\cite{Guoxin}) to $p_t=\pa_{\mu}J^*(t,\mu_t^*,X_t^*)$, we arrive at
		\begin{align*}
			\d p_t&=\pa_t\pa_{\mu}J^*(t,\mu_t^*,X_t^*)\d t+b(X_t^*,\rho_t^*,\alpha^*_t)\pa_x\pa_{\mu}J^*(t,\mu_t)(X_{t-})\d t\\
			&\quad-\int_Z\gamma(X_{t-}^*,\rho_{t-}^*,\alpha_{t-}^*,z)\pa_x\pa_{\mu}J^*(t,\mu_t)(X_{t-})\lambda(\d z)\d t\\
			&\quad+\frac12\tr\left(\sigma\sigma^{\T}(X_t^*,\rho_t^*,\alpha^*_t)\pa_{xx}\pa_{\mu}J^*(t,\mu_t^*)(X_t^*)\right)\d t+\pa_x\pa_{\mu}J^*(t,\mu_t^*)(X_t^*)\sigma(X_t^*,\rho_t^*,\alpha^*_t)\d W_t\\
			&\quad+\int_Z\left(\pa_{\mu}J^*(t,\mu_t^*)(X_t^*)-\pa_{\mu}J^*(t,\mu_t^*)(X_{t-}^*)\right)N(\d t,\d z)\\
			&\quad+\E'\left[b(X_t^{*'},\rho_t^*,\alpha^{*'}_t)\pa_{\mu}\pa_{\mu}J^*(t,\mu_{t-}^*)(X_{t-}^*,X_{t-}^{*'})\right]\d t\\
			&\quad+\frac{1}{2}\E'\left[\sigma\sigma^{\T}(X_t^{*'},\rho_t^*,\alpha^{*'}_t)\pa_{\mu}\pa_x\pa_{\mu}J^*(t,\mu_{t-}^*)(X_{t-}^*,X_{t-}^{*'})\right]\d t\\
			&\quad+\int_Z\left(\pa_{\mu}J^*(t,\mu_t^*)(X_{t-}^*)-\pa_{\mu}J^*(t,\mu_{t-}^*)(X_{t-}^*)\right)N(\d t,\d z)\\
			&\quad-\E'\left[\int_Z\gamma(X_t^{*'},\rho_t^*,\alpha_t^*,z)\pa_{\mu}\pa_{\mu}J^*(t,\mu_t^*)(X_t^*,X_t^{*'})\lambda(\d z)\right] \d t.
		\end{align*}
		Combining the above two terms, one can conclude that
		\begin{align*}
			& \d p_t =-\left\{\pa_xH(X_t^*,\alpha_t^*,\rho_t^*,p_t,P_t,K_t)+\E'\left[\pa_{x'}\delta H(X_t^{*'},\alpha_t^{*'},\rho_t^*,X_t^{*},\alpha_t^*,p_t',P_t',K_t')\right] \right\}\d t+P_t\d W_t\nonumber\\
			&+\int_ZK_{t-}\tilde{N}(\d t,\d z)-\Bigg\{\int_Z\pa_{\mu}J^*(t,I^{\mu_t^*,\hat{\alpha}_t^*,z,*}\mu_t^*)(X_t^*+\gamma(X_t^*,\rho_t^*,\alpha_t^*,z))(1+\pa_x\gamma(X_t^*,\rho_t^*,\alpha_{t-}^*,z))\lambda(\d z)\nonumber\\
			&\quad+\int_Z\E'\left[\pa_{\mu}J^*(t,I^{\mu_t^*,\hat{\alpha}_t^*,z,*}\mu_t^*)(X_t^{*'}+\gamma(X_t^{*'},\mu_t^*,\alpha^{*'}_t,z))\pa_{\mu}\gamma(X_t^{*'},\rho_t^*,\alpha_t^{*'},z)(X_t^*,\alpha_t^*)\right]\Bigg\}\lambda(\d z)\d t\\
			&-\pa_{\mu}J^*(t,\mu_t^*)(X_t^*)-\int_Z\E'\left[\pa_{\mu}J^*(t,\mu_t^*)(X_t^{*'})\pa_{\mu}\gamma(X_t^{*'},\rho_t^*,\alpha_t^{*'},z)(X_t^*,\alpha_t^*)\right]\lambda(\d z)\\
			&+\int_Z\left(\pa_{\mu}J^*(t,\mu_t^*)(X_t^*)-\pa_{\mu}J^*(t,\mu_{t-}^*)(X_{t-}^*)\right)\lambda(\d z)\d t\!\!+\!\!\int_Z\pa_{\mu}J^*(t,\mu_t^*)(X_t^*)\pa_x\gamma(X_{t-}^*,\rho_{t-}^*,\alpha_{t-}^*,z)\lambda(\d z)\d t\nonumber\\
			&+\int_Z\E'\left[\pa_{\mu}J^*(t,\mu_t^*)(X_t^{*'})-\pa_{\mu}J^*(t,\mu_{t-}^*)(X_{t-}^{*'})\right]\lambda(\d z)\\
			&=-\left\{\pa_xH(X_t^*,\alpha_t^*,\rho_t^*,p_t,P_t,K_t)+\E'\left[\pa_{x'}\delta H(X_t^{*'},\alpha_t^{*'},\rho_t^*,X_t^{*},\alpha_t^*,p_t',P_t',K_t')\right] \right\}\d t+P_t\d W_t\nonumber\\
			&\quad+\int_ZK_{t-}\tilde{N}(\d t,\d z).
		\end{align*}
		The last equality holds by using the fact that the jump time is at most countable, and hence a.s. $\{t\in[0,T];~X_t^*\neq X_{t-}^*\}$ forms a Lebesgue zero measure set while the remainders vanish on the set $\{t\in[0,T];~X_t^*=X_{t-}^*\}$. Thus, we complete the proof of the theorem. \hfill$\Box$
		
		\begin{remark}
			Let $Y_t^*=X_t^*+\gamma(X_t^*,\rho_t^*,\alpha_t^*,z)$ for $t\in[0,T]$. Then $K_t=\pa_{\mu}J^*(t,\Law(Y_t^*|\G_t))(Y_t^*)-$ $\pa_{\mu}J^*(t,\Law(X_t^*|\G_t))(X_t^*)$ includes both jumps in the conditional law and the state process. The jump of the measure term comes from the existence of jump common noise. While in the jump diffusion case, the law of $X_t$ is continuous in $t$, and hence the jumps only come from the state (c.f. \remref{jumps}).
		\end{remark}

		\section{An Example of Linear-Quadratic Extended MFC}\label{sec:example}
		
		In this section, we study an example of LQ-type extended MFC problem to further illustrate the relationship between SMP and HJB equation obtained in last two sections (\thmref{smp_HJB}) and the difference between the adjoint BSDE \equref{SBSDE} and \equref{SBSDE_jump}. In particular, we will apply both methods to solve the LQ MFC problem. For comparison purpose, we also present the results under a jump diffusion dynamics with the same coefficients to illustrate the distinctions in the adjoint processes. To ease the presentation, we only consider one dimensional state process with the control space $U=\R$.
		
		Fix $t\in[0,T]$, for any $s\in[t,T]$,
		the controlled state process is governed by
		\begin{equation}\label{eq:LQ_state}
			\d X_s=\left\{b_1\E[X_s|\G_s]+b_2\E[\alpha_s|\G_s]+b_3\alpha_s\right\}\d s+\sigma X_s\d W_s+\int_Z\gamma(z)\alpha_{s-}\tN(\d s,\d z),~
			X_t=\vartheta,
		\end{equation}
		where $b_1,b_2,b_3,\sigma\in\R$, $\gamma(\cdot)\in L^2((Z,\mathscr{Z},\lambda);\R)$ and other notations remain the same as before. We aim to minimize the cost functional over $\alpha=(\alpha_s)_{s\in[t,T]}\in\U$:
		\begin{equation}\label{eq:LQ_cost}
			J(t,\vartheta;\alpha)=\frac{1}{2}\E\left[\int_{ t}^T\alpha_s^2\d s+c\left|X_T-\E[X_T|\G_T]\right|^2 \right].
		\end{equation}
		Here, we recall that $\Gb=(\G_t)_{t\in[0,T]}$ is the natural extensions of $\Fb^N=(\F_t^N)_{t\in[0,T]}$ to $\Omega$ given in Section~\ref{sec:extendedMFCPoiCommon}, and $c\geq 0$ is a constant parameter. The value function is defined by
		\begin{equation}\label{eq:LQ_value}
			J^*(t,\vartheta)=\inf_{\alpha\in\U}J(t,\vartheta;\alpha).
		\end{equation}
		
		We first apply the SMP method to solve the LQ MFC problem \eqref{eq:LQ_state}-\eqref{eq:LQ_value}.  Recall  \equref{SHamiltonian} and \equref{SdHamiltonian}, the Hamiltonian and delta Hamiltonian can be written as, for $(x,u,\rho,p,P,K)\in\R\times U\times\Pc_2(\R\times U)\times\R\times\R\times L^2((Z,\mathscr{Z},\lambda);\R)$,
		\begin{equation*}
			\begin{aligned}
				&H(x,u,\rho,p,P,K)=\left(\int_{\R^n\times U}(b_1x+b_2{ u'})\rho(\d x,\d { u'})+b_3 u\right)p+\sigma xP+\int_Z\gamma(z)uK(z)\lambda(\d z)+\frac12 u^2,\\
				&\qquad \delta H(x,u.\rho,x',u',p,P,K)=(b_1x'+b_2u')p.
			\end{aligned}
		\end{equation*}
		The extension of $H$ clearly satisfies the $L$-convexity stated in \defref{L-convex}, and hence we can apply \thmref{SSUF} to conclude that $\alpha\in\U$ is an optimal control if
		\begin{equation}\label{LQ_optimal}
			\alpha_s=-\left(\int_Z\gamma(z)K_s(z)\lambda(\d z)+b_2\E[p_s|\G_s]+b_3p_s\right),\quad \forall s\in[t,T],
		\end{equation}
		where  the adjoint process $(p,P,K(z))=(p_s,P_s,K_s(z))_{s\in[t,T]}$ is the unique solution to the following BSDE that, for $s\in[t,T]$,
		\begin{equation}\label{LQ_BSDE}
			\displaystyle \d p_s=-\{\sigma P_s+b_1\E[p_s|\G_s]\}\d s+P_s\d W_s+\int_ZK_s(z)\tN(\d s,\d z),~~
			p_T=c\{X_T-\E[X_T|\G_T]\}.
		\end{equation}
		Consider $p=(p_s)_{s\in[t,T]}$ with the form given by
		\begin{equation}\label{LQ_p}
			p_s=\beta_sX_s+\eta_s\E[X_s|\G_s],\quad \forall s\in [t,T],
		\end{equation}
		with the deterministic functions $\beta,\eta\in C([0,T];\R)$ that $\beta_T=c$ and $\eta_T=-c$. Differentiating on both sides of \equref{LQ_p} leads to that, for $s\in[t,T]$,
		\begin{equation*}
			\begin{aligned}
				\d p_s
				&=\dot{\beta}_sX_s\d s+\beta_s\left\{(b_1\E[X_s|\G_s]+b_2\E[\alpha_s|\G_s]+b_3\alpha_s)\d s+\sigma X_s\d W_s+\int_Z\gamma(z)\alpha_s\tN(\d s,\d z)\right\}\\
				&\quad+\dot{\eta}_s\E[X_s|\G_s]\d s+\eta_s\left\{[b_1\E[X_s|\G_s]+(b_2+b_3)\E[\alpha_s|\G_s]]\d s+\int_Z\gamma(z)\E[\alpha_s|\G_s]\tN(\d s,\d z)\right\},
			\end{aligned}
		\end{equation*}
		where $\dot{\beta}_s:=\frac{\d\beta_s}{\d s}$ and $\dot{\eta}_s:=\frac{\d\eta_s}{\d s}$. Comparing the equation \equref{LQ_BSDE}, we have
		\begin{equation}\label{LQ_adjoint}
			\begin{cases}
				\displaystyle -\{\sigma P_s+b_1\E[p_s|\G_s]\}=\dot{\beta}_sX_s+\dot{\eta}_s\E[X_s|\G_s]+(\beta_s+\eta_s)\{b_1\E[X_s|\G_s]+b_2\E[\alpha_s|\G_s]\}\\
				\displaystyle\qquad\qquad\qquad\qquad\qquad +b_3\{\beta_s\alpha_s+\eta_s\E[\alpha_s|\G_s]\},\\
				\displaystyle P_s=\beta_s\sigma X_s,\quad K_s(z)=\gamma(z)\{\beta_s\alpha_s+\eta_s\E[\alpha_s|\G_s]\}.
			\end{cases}
		\end{equation}
		Combining with \equref{LQ_optimal} and taking the conditional expectation, we deduce that
		\begin{align}
			\displaystyle   & \E[\alpha_s|\G_s]=-\frac{(b_2+b_3)(\beta_s+\eta_s)}{1+\int_Z\gamma^2(z)\lambda(\d z)(\beta_s+\eta_s)}\E[X_s|\G_s],\label{LQ_alphacon}\\
			\displaystyle    &  \alpha_s =-\frac{(b_2+b_3)(\beta_s+\eta_s)}{1+\int_Z\gamma^2(z)\lambda(\d z)(\beta_s+\eta_s)}\E[X_s|\G_s]-\frac{b_3\beta_s}{1+\int_Z\gamma^2(z)\lambda(\d z)\beta_s}\{X_s-\E[X_s|\G_s]\}.\label{LQ_alpha}
		\end{align}
		Substituting $\alpha_s$ and $\E[\alpha_s|\G_s]$ into the first equation of \equref{LQ_adjoint}, one can derive the  Riccati equations that, for $s\in[t,T]$,
		\begin{equation}\label{riccati}
			\begin{cases}
				\displaystyle \dot{\beta}_s+\sigma^2\beta_s-\frac{b_3^2}{1+\int_Z\gamma^2(z)\lambda(\d z)\beta_s}\beta_s^2=0;\\[1.4em]
				\displaystyle \dot{\eta}_s+\frac{b_3^2}{1+\int_Z\gamma^2(z)\lambda(\d z)\beta_s}\beta_s^2+\left(2b_1-\frac{(b_2+b_3)^2(\beta_s+\eta_s)}{1+\int_Z\gamma^2(z)\lambda(\d z)(\beta_s+\eta_s)}\right)(\beta_s+\eta_s)=0
			\end{cases}
		\end{equation}
		with terminal conditions $\beta_T=c$ and $\eta_T=-c$.
		
		In fact, Eq.~\equref{riccati} is a decoupled Riccati equation. From Theorem 7.2 in Chapter 6 of Yong and Zhou~\cite{Zhou}, it follows that \equref{riccati} admits a unique (smooth) solution $(\beta,\eta)=(\beta_s,\eta_s)_{s\in[t,T]}$ satisfying $1+\int_Z\gamma^2(z)\lambda(\d z)\beta_s>0,~1+\int_Z\gamma^2(z)\lambda(\d z)(\beta_s+\eta_s)>0$ and $\beta_T=c,~\eta_T=-c$. Moreover, using the uniqueness of the solution to BSDE \equref{LQ_BSDE}, we deduce that Eq.~\equref{riccati} has a unique solution  and the optimal control is given by \equref{LQ_alpha}.
		
		Before deriving the HJB equation, we need a preparation result whose proof is straightforward and omitted.
		\begin{lemma}\label{minimization}
			Let $a,b,c,d\in\R$ satisfy $a>0$ and $a+c>0$. For a given squared-integrable random variable $X$ on some probability space $(\Omega,\F,\mathbb{P})$, introduce the  functional $F:L^2((\Omega,\F,P);\R)\mapsto\R$ by
			\begin{equation*}
				F(\vartheta):=a\E[\vartheta^2]+b\E[\vartheta X]+c\left|\E[\vartheta]\right|^2+d\E[\vartheta],\quad \forall \vartheta\in L^2((\Omega,\F,P);\R).
			\end{equation*}
			Then, the functional $F$ admits a unique minimizer given by
			\begin{equation*}
				\vartheta^*=-\frac{b\E[X]+d}{2(a+c)}-\frac{b}{2a}\{X-\E[X]\}.
			\end{equation*}
			Moreover, the minimum of the functional $F$ is given by
			$F(\vartheta^*)=-\frac{b^2}{4a}{\rm Var}(X)-\frac{|b\E[X]+d|^2}{4(a+c)}$.
		\end{lemma}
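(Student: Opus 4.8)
The plan is to treat $F$ as a strictly convex quadratic functional on the Hilbert space $L^2((\Omega,\F,\Pb);\R)$ and exploit the orthogonal decomposition $\xi = \E[\xi] + (\xi - \E[\xi])$. Writing $m=\E[\xi]$ and $\zeta = \xi - \E[\xi]$, so that $\E[\zeta]=0$ and $\E[\xi^2] = m^2 + \E[\zeta^2]$, and similarly splitting $X = \E[X] + (X-\E[X])$, one gets $\E[\xi X] = m\E[X] + \E[\zeta(X-\E[X])]$. Substituting into the definition of $F$ separates the functional into a part depending only on $m$ and a part depending only on $\zeta$:
\begin{align*}
F(\xi) &= \big(a m^2 + b\,m\E[X] + c\,m^2 + d\,m\big) + \big(a\,\E[\zeta^2] + b\,\E[\zeta(X-\E[X])]\big)\\
&= \Big((a+c)m^2 + (b\E[X]+d)m\Big) + \Big(a\,\E[\zeta^2] + b\,\E[\zeta(X-\E[X])]\Big).
\end{align*}

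Next I would minimize the two pieces independently. The first piece is a scalar quadratic in $m$ with positive leading coefficient $a+c>0$, hence minimized at $m^* = -\dfrac{b\E[X]+d}{2(a+c)}$, with minimal value $-\dfrac{|b\E[X]+d|^2}{4(a+c)}$. For the second piece, on the subspace of mean-zero random variables complete the square: $a\,\E[\zeta^2] + b\,\E[\zeta(X-\E[X])] = a\,\E\big[(\zeta + \tfrac{b}{2a}(X-\E[X]))^2\big] - \dfrac{b^2}{4a}\E[(X-\E[X])^2]$, using $a>0$; since $X-\E[X]$ itself has mean zero, the choice $\zeta^* = -\dfrac{b}{2a}(X-\E[X])$ is admissible and attains the minimum $-\dfrac{b^2}{4a}\,{\rm Var}(X)$. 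Recombining, $\xi^* = m^* + \zeta^* = -\dfrac{b\E[X]+d}{2(a+c)} - \dfrac{b}{2a}\{X-\E[X]\}$ and $F(\xi^*) = -\dfrac{b^2}{4a}{\rm Var}(X) - \dfrac{|b\E[X]+d|^2}{4(a+c)}$, as claimed. Uniqueness follows from strict convexity: the Hessian quadratic form of $F$ at any point, $\xi \mapsto 2a\,\E[\xi^2] + 2c\,|\E[\xi]|^2 = 2(a+c)|\E[\xi]|^2 + 2a\,\E[(\xi-\E[\xi])^2]$, is strictly positive for $\xi\neq 0$ because both $a$ and $a+c$ are strictly positive, so the critical point is the unique global minimizer.

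I do not anticipate a genuine obstacle here; the only point requiring a little care is bookkeeping the cross term $\E[\xi X]$ correctly under the mean/mean-zero split and checking that the separate minimizers $m^*$ and $\zeta^*$ are jointly attainable by a single $\xi^*\in L^2$ — which they are, precisely because the decomposition $L^2 = \R \oplus \{\zeta:\E[\zeta]=0\}$ is a direct sum and $X-\E[X]$ lies in the second summand. An alternative route would be to differentiate $F$ in the Gâteaux sense and solve the first-order condition $\E[(2a\xi^* + bX + 2c\E[\xi^*] + d)\,h] = 0$ for all $h\in L^2$, which forces $2a\xi^* + bX + 2c\E[\xi^*] + d = $ (a constant determined by taking expectations); this yields the same formula, but the convex-decomposition argument has the advantage of delivering the closed form for $F(\xi^*)$ simultaneously and for free.
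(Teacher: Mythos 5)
Your proof is correct, but it takes a different route from the paper's. The paper first establishes strict convexity of $F$ by computing the convexity defect $\lambda F(\xi)+(1-\lambda)F(\eta)-F(\lambda\xi+(1-\lambda)\eta)$ explicitly (it equals a positive multiple of $a\,\mathrm{Var}(\xi-\eta)+(a+c)|\E[\xi-\eta]|^2$), and then finds the unique zero of the Fr\'echet differential by solving $2a\xi+bX+2c\E[\xi]+d=0$: taking expectations pins down $\E[\xi]$, and substituting back yields $\xi^*$. This is exactly the ``alternative route'' you sketch in your final paragraph. The value $F(\xi^*)$ is then left to ``simple calculations.'' Your orthogonal-decomposition argument, splitting $L^2=\R\oplus\{\zeta:\E[\zeta]=0\}$ and completing the square on each summand, is equally valid and arguably cleaner for this particular statement: it produces the minimizer and the minimal value $-\frac{b^2}{4a}\mathrm{Var}(X)-\frac{|b\E[X]+d|^2}{4(a+c)}$ in one pass, makes transparent why the two hypotheses $a>0$ and $a+c>0$ are each needed (one per summand), and gives uniqueness directly from the strict positivity of the quadratic part without appealing to a separate convexity identity. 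The paper's variational route generalizes more readily to functionals that are not explicitly diagonalizable by the mean/mean-zero split, but for this lemma both arguments are complete and the bookkeeping of the cross term $\E[\xi X]$ in your version is handled correctly.
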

		It follows from \equref{extended-HJB-1-poisson-common-noise} and \ref{lemma1}, the HJB equation can be written as:
		\begin{equation}\label{LQ_HJB}
			\pa_tJ({t},\mu)+\mathbb{T}J({t},\mu)=0,~J(T,\mu)=\frac{c}{2}\left[\int_{\R} x^2\mu(\d x)-\left(\int_{\R}x\mu(\d x)\right)^2\right],
		\end{equation}
		where the operator $\mathbb{T}$ is defined by
		\begin{align}\label{TJ}
			\mathbb{T}J(t,\mu)&=\inf_{\alpha\in L(\Omega;U)}\Bigg\{\E\Bigg[
			\frac12\alpha^2+\left(b_1\E[X]+b_2\E[\alpha]+b_3\alpha-\int_Z\gamma(z)\lambda(\d z)\alpha\right)\pa_{\mu}J(t,\mu)(X)\\
			&+\frac{1}{2}\sigma^2X^2\pa_x\pa_\mu J(t,\mu)(X)
			\bigg]+\int_Z \left\{J(t,\Law(X+\gamma(z)\alpha))-J(t,\mu)\right\}\lambda(\d z)\Bigg\},\quad X\sim\mu.\nonumber
		\end{align}
		We assume  that Eq.~\eqref{LQ_HJB} has a classical solution $\hat J$ in the form of
		\begin{equation}\label{LQ_hatJ}
			\hat J({t},\mu)=\frac{1}{2}\left\{\beta_t\E[ X^2]+\eta_t\left|\E[X]\right|^2\right\},~~X\sim\mu,~~\forall t\in[0,T].
		\end{equation}
		Here, $\beta,\eta\in C([0,T];\R)$ satisfy $1+\int_Z\gamma^2(z)\lambda(\d z)\beta_t>0$ and $1+\int_Z\gamma^2(z)\lambda(\d z)(\beta_t+\eta_t)>0$ for all $t\in[0,T]$. Hence, we also have $\beta_T=c$ and $\eta_T=-c$. Thus, it holds that
		\begin{equation*}
			\hat J(t,\Law(X+\gamma(z)\alpha))-\hat J(t,\mu)=\frac12\left\{2\gamma(z)(\beta_t\E[\alpha X]+\eta_t\E[\alpha]\E[ X])+\gamma^2(z)(\beta_t\E[\alpha^2]+\eta_t(\E[\alpha])^2)\right\}.
		\end{equation*}
		Moreover, it holds that
		\begin{equation*}
			\pa_{\mu}\hat J(t,\mu)(X)=\beta_t X+\eta_t \E[X],\quad \pa_x\pa_{\mu}\hat J(t,\mu)(X)=\beta_t.
		\end{equation*}
		Inserting the above equations into \equref{TJ}, and by applying \lemref{minimization}, we conclude that the optimizer of operator $\mathbb{T}$ in \eqref{TJ} satisfies
		\begin{equation}\label{LQ_optimalcontrol}
			\alpha_{t}=-\frac{(b_2+b_3)(\beta_t+\eta_t)}{1+\int_Z\gamma^2(z)\lambda(\d z)(\beta_t+\eta_t)}\E[X]-\frac{b_3\beta_t}{1+\int_Z\gamma^2(z)\lambda(\d z)\beta_t}\{X-\E[X]\},
		\end{equation}
		which resembles the optimal control of the form in \equref{LQ_alpha}. The HJB equation then becomes
		\begin{equation*}
			\begin{aligned}
				0&=\left[\dot{\eta}_t+\frac{b_3^2\beta_t^2}{1+\int_Z\gamma^2(z)\lambda(\d z)\beta_t}+\left(2b_1-\frac{(b_2+b_3)^2(\beta_t+\eta_t)}{1+\int_Z\gamma^2(z)\lambda(\d z)(\beta_t+\eta_t)}\right)(\beta_t+\eta_t)\right]\left|\E[X]\right|^2\\
				&\quad+\left(\dot{\beta}_t+\sigma^2\beta_t-\frac{b_3^2\beta_t^2}{1+\int_Z\gamma^2(z)\lambda(\d z)\beta_t}\right)\E[X^2],
			\end{aligned}
		\end{equation*}
		which leads to the Riccati equations, for $t\in[0,T]$,
		\begin{equation}\label{riccati_HJB}
			\begin{cases}
				\displaystyle \dot{\beta}_t+\sigma^2\beta_t-\frac{b_3^2\beta_t^2}{1+\int_Z\gamma^2(z)\lambda(\d z)\beta_t}=0,\\[1.2em]
				\displaystyle \dot{\eta}_t+\frac{b_3^2\beta_t^2}{1+\int_Z\gamma^2(z)\lambda(\d z)\beta_t}+\left(2b_1-\frac{(b_2+b_3)^2(\beta_t+\eta_t)}{1+\int_Z\gamma^2(z)\lambda(\d z)(\beta_t+\eta_t)}\right)(\beta_t+\eta_t)=0
			\end{cases}
		\end{equation}
		with $1+\int_Z\gamma^2(z)\lambda(\d z)\beta_t>0$, $1+\int_Z\gamma^2(z)\lambda(\d z)(\beta_t+\eta_t)>0$ and $\beta_T=c,~\eta_T=-c$. Using Theorem 7.2 in Chapter 6 of Yong and Zhou \cite{Zhou} again, we can claim its well-posedness. One can find that the above Riccati equation resembles \equref{riccati}. Now, $\hat J$ defined in \equref{LQ_hatJ} with $(\beta,\eta)=(\beta_t,\eta_t)_{t\in[0,T]}$ the unique solution to Eq.~\equref{riccati_HJB} is indeed a classical solution to \equref{LQ_HJB}, and hence \assref{Smoothness} is fulfilled. Moreover, \equref{LQ_optimalcontrol} ensures the existence of optimal lifted (randomized) feedback policy, and thus \assref{measurable} holds. Furthermore, we can insert \equref{LQ_optimalcontrol} into the HJB equation and apply \thmref{equivalencevalue} to conclude the value function $J^*(t,\mu)=\hat J(t,\mu)$. Consequently, \thmref{smp_HJB} is applicable in this LQ context.

		To compare with the case with Poissonian common noise, we also discuss the MFC problem with Poisson idiosyncratic noise that, for $s\in[t,T]$,
		\begin{equation}\label{jump_diffsuion}
			\d X_s=\{b_1\E[X_s]+b_2\E[\alpha_s]+b_3\alpha_s\}\d s+\sigma X_s\d W_s+\int_Z\gamma(z)\alpha_{s-}\tN(\d s,\d z),~X_t=\vartheta\in\R.
		\end{equation}
		In contrast to \eqref{eq:LQ_cost}, we consider the objective functional without common noise that
		\begin{equation}\label{eq:LQ_cost_jump}
			J(t,\vartheta;\alpha)=\frac12\E\left[\int_{ t}^T\alpha_s^2\d s+c\left|X_T-\E X_T\right|^2 \right].
		\end{equation}
		The value function is thus defined by
		\begin{equation}\label{eq:LQ_value_jump}
			J^*(t,\vartheta)=\inf_{\alpha\in\U}J(t,\vartheta;\alpha).
		\end{equation}
		
		Similar to the case with Poissonian common noise, by \corref{SNECj} and \corref{SSUFj}, the Hamiltonian and delta Hamiltonian here can be written as, for $(x,u,\rho,p,P,K)\in\R\times U\times\Pc_2(\R\times U)\times\R\times\R\times L^2((Z,\mathscr{Z},\lambda);\R)$,
		\begin{equation*}
			\begin{aligned}
				& H(x,u,\rho,p,P,K)=\left(\int_{\R^n\times U}(b_1x+b_2u)\rho(\d x,\d u)+b_3\right)p+\sigma xP+\int_Z\gamma(z)uK(z)\lambda(\d z)+\frac12 u^2,\\
				&\qquad \delta H(x,u,\rho,x',u',p,P,K)=(b_1x'+b_2u')p.
			\end{aligned}
		\end{equation*}
		The extension of $H$ clearly satisfies $L$-convexity stated in \defref{L-convex}, and hence we are ready to apply \thmref{SSUF} to conclude that $\alpha\in\U$ is an optimal control if
		\begin{equation}\label{LQ_optimalj}
			\alpha_s=-\left(\int_Z\gamma(z)K_s(z)\lambda(\d z)+b_2\E[p_s]+b_3p_s \right),\quad \forall s\in[t,T].
		\end{equation}
		Here, the adjoint process $(p,P,K(z))=(p_s,P_s,K_s(z))_{s\in[t,T]}$ is the unique solution to the following BSDE, for $s\in[t,T]$,
		\begin{equation}\label{LQ_BSDEj}
			\d p_s=-\{\sigma P_s+b_1\E[p_s]\}\d s+P_s\d W_s+\int_ZK_s(z)\tN(\d s,\d z),~~~
			p_T=c\{X_T-\E[X_T]\}.
		\end{equation}
		Let us consider $p_s$ in the form given by
		\begin{equation}\label{LQ_pj}
			p_s=\beta_sX_s+\eta_s\E[X_s],\quad \forall s\in [t,T]
		\end{equation}
		with deterministic coefficients $\beta,\eta\in C([0,T];\R)$ such that $\beta_T=c$ and $\eta_T=-c$. Repeating the same procedure as in the case with common noise, 
		we can get
		\begin{equation}\label{LQ_adjointj}
			\begin{cases}
				\displaystyle -\{\sigma P_s+b_1\E[p_s]\}=\dot{\beta}_sX_s+\dot{\eta}_s\E[X_s]+(\beta_s+\eta_s)\{b_1\E[X_s] +b_2\E[\alpha_s]\}\\
				\qquad\qquad\qquad\quad\qquad +b_3\{\beta_s\alpha_s+\eta_s\E[\alpha_s]\},\\
				\displaystyle P_s=\beta_s\sigma X_s,\quad K_s(z)=\gamma(z)\beta_s\alpha_s.
			\end{cases}
		\end{equation}
		as well as
		\begin{equation}\label{LQ_alphaconj}
			\begin{cases}
				\displaystyle \E[\alpha_s]=-\frac{(b_2+b_3)(\beta_s+\eta_s)}{1+\int_Z\gamma^2(z)\lambda(\d z)\beta_s}\E[X_s],\\[1.2em]
				\displaystyle\alpha_s=-\frac{(b_2+b_3)(\beta_s+\eta_s)}{1+\int_Z\gamma^2(z)\lambda(\d z)\beta_s}\E[X_s]-\frac{b_3\beta_s}{1+\int_Z\gamma^2(z)\lambda(\d z)\beta_s}\{X_s-\E[X_s]\},
			\end{cases}
		\end{equation}
		where coefficients $\beta_s$ and $\eta_s$ satisfy the Riccati equations, for $s\in[t,T]$,
		\begin{equation}\label{riccatij}
			\begin{cases}
				\displaystyle \dot{\beta}_s+\sigma^2\beta_s-\frac{b_3^2}{1+\int_Z\gamma^2(z)\lambda(\d z)\beta_s}\beta_s^2=0;\\[1.2em]
				\displaystyle \dot{\eta}_s+\frac{b_3^2}{1+\int_Z\gamma^2(z)\lambda(\d z)\beta_s}\beta_s^2+\left[2b_1-\frac{(b_2+b_3)^2(\beta_s+\eta_s)}{1+\int_Z\gamma^2(z)\lambda(\d z)\beta_s}\right](\beta_s+\eta_s)=0
			\end{cases}
		\end{equation}
		with $\beta_T=c$ and $\eta_T=-c$. One can easily observe that \equref{riccatij} is again well-posed thanks to Theorem 7.2 in Chapter 6 of Yong and Zhou~\cite{Zhou}, and the optimal control is given by \equref{LQ_alphaconj}.
		
		\begin{remark}
			We note that the last line of \eqref{LQ_adjointj} differs substantially from that of \eqref{LQ_adjoint} in the case with Poissonian common noise mainly because $s\mapsto\E[X_s|\G_s]$ is discontinuous but $s\mapsto \E[X_s]$ is continuous here. This  leads to a different adjoint process $K_s(z)$ in this case with Poissonian idiosyncratic noise whose jumps only stems from $X_s$.
		\end{remark}
		
		\ \\
		\textbf{Acknowledgements.} The authors are grateful to the Associate Editor and the referee for their helpful comments. L. Bo and J. Wang are supported by National Natural Science of Foundation of China (grant 12471451), Natural Science Basic Research Program of Shaanxi (grant 2023-JC-JQ-05) and the Shaanxi Fundamental Science Research Project for Mathematics and Physics (grant 23JSZ010). X. Wei is supported by National Natural Science Foundation of China grant under no.12201343. X. Yu is supported by the Hong Kong RGC General Research Fund (GRF) under grant no. 15306523 and grant no. 15211524.
		
		\ \\
		\appendix
		\section{Proofs of Auxiliary Results for Section~\ref{sec:model}}\label{appendix_for_sec2}
		The section collects proofs of some lemmas from Section~\ref{sec:model}.
		
		\begin{proof}[Proof of \lemref{metriceq}]
			On one hand, the restriction of $f\in\Lip_1(V)$ to $\R^n\times\Pc(U)$ belongs to $\Lip_1(\R^n\times\Pc(U))$. On the other hand, any $f\in\Lip_1(\R^n\times\Pc(U))$ can be extended to be an element in $\Lip_1(V)$ according to McShane~\cite{McShane}. 
		\end{proof}

		\begin{proof}[Proof of  \lemref{extensioneq}]
			In view of the definition in \eqref{eq:extensionh}, it suffices to show that $\ps:{\cal P}_2(\R^n\times {\cal P}(U))\mapsto{\cal P}_2(\R^n\times U)$ given by \eqref{eq:affinemaping} is Lipschitz continuous. We have, for any $\xi_1,\xi_2\in\mathcal{P}_2(V)$,
			\begin{equation*}
				\begin{aligned}
					\lv\ps(\xi_1)-\ps(\xi_2)\rv_{K,{\rm FM}}&=\sup_{\substack{f\in\Lip_1(K)\\ \lv f\rv_{\infty}\leq 1}}\left(\int_K\int_{\mathcal{M}(U)}f(x,u)q(\d u)\left(\xi_1(\d x,\d q)-\xi_2(\d x,\d q)\right)\right)\\
					&=\sup_{\substack{f\in\Lip_1(K)\\ \lv f\rv_{\infty}\leq 1}}\left(\int_V\int_U f(x,u)q(\d u)(\xi_1(\d x,\d q)-\xi_2(\d x,\d q))\right)\\
					&\leq\sup_{F\in\Lip_1(\R^n\times\Pc(U))}\left(\int_{\R^n}\int_{\Pc(U)}F(x,q)(\xi_1(\d x,\d q)-\xi_2(\d x,\d q))\right)\\
					&=d_{\rm KR}(\xi_1,\xi_2).
				\end{aligned}
			\end{equation*}
			The last inequality above holds because the mapping $F_f:\R^n\times\Pc(U)\mapsto\R$ defined by
			\begin{equation}\label{eq:Ff}
				F_f(x,q):=\int_Uf(x,u)q(\d u),\quad\forall (x,q)\in\R^n\times\Pc(U)
			\end{equation}
			belongs to the space $\Lip_1(\R^n\times\Pc(U))$ for each fixed $f\in{\rm Lip}_1(K)$ satisfying $\|f\|_{\infty}\leq1$. In fact, it follows from \eqref{eq:Ff} that, for any $(x_i,q_i)\in\R^n\times\Pc(U)$ with $i=1,2$,
			\begin{equation*}
				\begin{aligned}
					\left|F_f(x_1,q_1)-F_f(x_2,q_2)\right|
					&\leq\int_U\left|f(x_1,u)-f(x_2,u)\right|q_1(\d u)+\int_Uf(x_2,u)(q_1(\d u)-q_2(\d u))\\
					&\leq\int_U|x_1-x_2|q_1(\d u)+\sup_{\substack{g\in\Lip_1(U)\\ \lv g\rv_{\infty}\leq 1}}\left(\int_U g(u)(q_1(\d u)-q_2(\d u))\right)\\
					&= |x_1-x_2|+\lv q_1-q_2\rv_{U,{\rm FM}}\nonumber\\
					&=\lv(x_1,q_1)-(x_2,q_2)\rv_V.
				\end{aligned}
			\end{equation*}
			Thus, we complete the proof of the lemma.
		\end{proof}
		
		\begin{proof}[Proof of \lemref{lem:condexpect}]
			The claim can be easily verified by noting the equivalence \begin{align*}
				\int_{\M(U)}q(\d u)\Law((X,\delta_{\alpha})|\G)(\d x,\d q)=\int_U\delta_{u'}(\d u)\Law((X,\alpha)|\G)(\d x,\d u')=\Law((X,\alpha)|\G)(\d x,\d u).
			\end{align*}
		\end{proof}
		
		\begin{proof}[Proof of \lemref{extensionLdif}]\quad
			First of all, we have from \eqref{eq:affinemaping} that, for any $\xi_1,\xi_2\in\Pc_2(\R^n\times\Pc(U))$,
			\begin{equation*}
				\begin{aligned}
					\tilde{h}(\xi_2)-\tilde{h}(\xi_1)&=h(\ps(\xi_2))-h(\ps(\xi_1))\\
					&=\int_0^1\int_{\R^n}\int_{B}\frac{\delta h}{\delta\rho}(\ps(\xi_1)+\lambda(\ps(\xi_2)-\ps(\xi_1)))(x,u)(\ps(\xi_2)-\ps(\xi_1))(\d x,\d u)\d\lambda\\
					&=\int_0^1\int_{\R^n}\int_{B}\int_{\M(U)}\frac{\delta h}{\delta\rho}(\ps(\xi_1+\lambda(\xi_2-\xi_1)))(x,u)q(\d u)(\xi_2-\xi_1)(\d x,\d q)\d\lambda\\
					&=\int_0^1\int_{\R^n}\int_{\M(U)}\left(\int_U\frac{\delta h}{\delta\rho}(\ps(\xi_1+\lambda(\xi_2-\xi_1)))(x,u)q(\d u)\right)(\xi_2-\xi_1)(\d x,\d q)\d\lambda,
				\end{aligned}
			\end{equation*}
			where we have applied Fubini theorem to interchange the order of integral in the last line and recalled that $B$ stands for the Banach space introduced in Subsection \ref{subsecB}. In lieu of \defref{def:derivativelinear}, the representation \eqref{eq:chain} holds.  It remains to show that the growth condition in \defref{def:derivativelinear} is satisfied by the { linear functional derivative} $\frac{\delta \tilde{h}}{\delta \xi}$.
			
			According to \defref{def:derivativelinear},  there exists a constant $C>0$ such that $\left|\frac{\delta h}{\delta\rho}(\ps(\xi))(x,u)\right|\leq C(1+\|(x,u)\|_{K}^2)$ for all $(x,u)\in\bigcup\limits_{\eta\in I}\supp(\eta)$ with $I:={\cal P}_2(\R^n\times U)$. Then, by \eqref{eq:chain} and the compactness of the control space $U$, for all $(x,q)\in\bigcup_{\xi\in\Pc_2(\R^n\times\Pc(U))}\supp(\xi)=\R^n\times\Pc(U)$, it holds that
			\begin{equation*}
				\begin{aligned}
					\left|\frac{\delta \tilde{h}}{\delta \xi}(\xi)(x,q)\right|&=\left|\int_U \frac{\delta h}{\delta\rho}(\ps(\xi))(x,u)q(\d u)\right|\leq\left|\int_U C(1+|x|^2+\|u\|^2)q(\d u)\right|\nonumber\\
					&\leq C_U(1+|x|^2+\lv q\rv_{U,{\rm FM}})=C_U(1+\lv (x,q)\rv_V^2),
				\end{aligned}
			\end{equation*}
			where $C_U$ is a positive constant that depends on $U$. Thus, the proof is completed.
		\end{proof}
		
		\section{Proofs of Auxiliary Results for Section~\ref{sec:SMP}}\label{appendix}
		In this appendix, we collect proofs of some auxiliary results in Section~\ref{sec:SMP} of the paper.

		\begin{proof}[Proof of \lemref{dif}]
			The well-posedness of $V=(V_t)_{t\in [0,T]}$ follows from a similar argument of Lemma 3.9 in Bo et al. \cite{BWY1}.
			Let us set $Y_t:=\frac{X_t^{\epsilon}-X_t}{\epsilon}-V_t$ for $t\in[0,T]$. Then, it holds that
			{\small\begin{align*}
					\d Y_t&=\int_0^t\int_U\left(\frac{1}{\epsilon}(\tb(X_s^{\epsilon},\xi_s^{\epsilon},u)-\tb(X_s,\xi_s^{\epsilon},u)-\pa_x{\tb(X_s,\xi_s,u)V_s}\right)q_s(\d u)\d s\\
					&\quad+\int_0^t\int_U\left(\frac{1}{\epsilon}(\tb(X_s,\xi_s^{\epsilon},u)-\tb(X_s,\xi_s,u))-\E'[{\eta_{b,s}V_s'+\zeta_{b,s}}]\right)q_s(\d u)\d s\\
					&\quad+\int_0^t\int_U\left(\frac{1}{\epsilon}(\ts(X_s^{\epsilon},\xi_s^{\epsilon},u)-\ts(X_s,\xi_s^{\epsilon},u)-\pa_x{\ts(X_s,\xi_s,u)V_s}\right)q_s(\d u)\d s\\
					&\quad+\int_0^t\int_U\left(\frac{1}{\epsilon}(\ts(X_s,\xi_s^{\epsilon},u)-\ts(X_s,\xi_s,u))-\E'[{\eta_{\sigma,s}V_s'+\zeta_{\sigma,s}}]\right)q_s(\d u)\d W_s\\
					&\quad+\int_0^t\int_Z\int_U\left(\frac{1}{\epsilon}(\tg(X_{s-},\xi_{s-}^{\epsilon},u,z)-\tg(X_{s-},\xi_{s-}^{\epsilon},u,z))-\pa_x\tg(X_{s-},\xi_{s-},u,z)V_t\right)q_{s-}(\d u)\tN(\d s,\d z)\\
					&\quad+\int_0^t\int_Z\int_U\left(\frac{1}{\epsilon}(\tg(X_{s-},\xi_{s-}^{\epsilon},u,z)-\tg(X_{s-},\xi_{s-},u,z))-\E'[\eta_{\gamma,s-}V_{s-}'+\zeta_{\gamma,s-}]\right)q_{s-}(\d u)\tN(\d s,\d z)\\
					&=\int_0^t\int_U\left\{\pa_x\tb(X_s,\xi_s,u)Y_s+\E'  \left[\pa_x\left(\frac{\delta\tb}{\delta\xi}(X_s,\xi_s,u)\right)(X_s',q_s')Y_s'\right]\right\}q_s(\d u)\d s+\int_0^t\{\kappa_{b,s}+\E'[\delta_{b,s}]\}\d s\\
					&\quad+\int_0^t\int_U\left\{\pa_x\ts(X_s,\xi_s,u)Y_t+\E'\left[ \pa_x\left(\frac{\delta\ts}{\delta\xi}(X_s,\xi_s,u)\right)(X_s',q_s') Y_s'\right]\right\}q_s(\d u)\d W_s +\int_0^t\{\kappa_{\sigma,s}+\E'[\delta_{\sigma,s}]\}\d W_s\\
					&\quad+\int_0^t\int_U\int_Z\left\{\pa_x\tg(X_{s-},\xi_{s-},u,z)Y_s+\E' \left[\pa_x\left(\frac{\delta\tg}{\delta\xi}(X_{s-},\xi_{s-},u,z)\right)(X_{s-}',q_{s-}') Y_{s-}'\right]\right\}q_{s-}(\d u)N(\d s,\d z)\\
					&\quad+\int_0^t\{\kappa_{\gamma,s-}+\E'[\delta_{\gamma,s-}]\}\tN(\d s,\d z),
			\end{align*}}
			where $\kappa_{\phi,t},\delta_{\phi,t}$ are defined as follows\begin{align*}
				\kappa_{\phi,t}&=\frac{1}{\epsilon}\left(\tilde\phi(X_t^{\epsilon},\xi_t^{\epsilon},u)-\tilde\phi(X_t,\xi_t^{\epsilon},u)-\pa_{x}\tilde\phi(X_t,\xi_t,u)(X_t^{\epsilon}-X_t)\right),\\
				\delta_{\phi,t}&=\frac{1}{\epsilon}\left(\tilde\phi(X_t,\xi_t^{\epsilon},u)-\tilde\phi(X_t,\xi_t,u)-\E'\left[\eta_{b,t}(X_t^{\epsilon'}-X_t')\right]\right),~\forall \phi\in \{b,\sigma,\gamma(\cdot)\}.
			\end{align*}
			Then according to \lemref{diff}, \assref{ass} and \equref{eq:dif}, it holds that
			\begin{equation}\label{eq:remainder}
				\lim_{\epsilon\downarrow0}\sum_{\phi\in\{b,\sigma,\gamma(\cdot)\}}\sup_{t\in[0,T]}\E\left[|\kappa_{\phi,t}|^2+\left|\E'[\delta_{\phi,t}]\right|^2\right]=0.
			\end{equation}
			In lieu of the boundedness of derivatives in \assref{ass}, we can derive, a.s.
			\begin{equation*}
				|Y_t|^2\leq C\int_0^t|Y_s|^2\d s+C\int_0^t|Y_{s}|^2\d s+\rho_t,\quad \forall t\in[0,T],
			\end{equation*}
			{where $\rho_t$ satisfies $\rho_t\to 0$ as $\epsilon\downarrow 0$ in light of \equref{eq:remainder}}. Thus,
			similar to the proof of \lemref{statedif}, we can conclude with the help of Gronwall's inequality that
			\begin{equation*}
				\lim_{\epsilon\downarrow 0}\sup_{t\in[0,T]}\E\left[\left|\frac{X_t^{\epsilon}-X_t}{\epsilon}-V_t\right|^2\right]=\lim_{\epsilon\downarrow 0}\sup_{t\in[0,T]}\E\left[|Y_t|^2\right]=0.
			\end{equation*}
		\end{proof}
		\begin{proof}[Proof of \lemref{valuedif}]\quad
			We have from the definition of the relaxed cost functional $\J$ that
			\begin{equation*}
				\begin{aligned}
					0&\leq \J(q^{\epsilon})-\J(q)\\ &=\E\left[g(X_T^{\epsilon},\mu_T^{\epsilon})-g(X_T,\mu_T)+\int_0^T\left(\int_Uf(X_t^{\epsilon},\xi_t^{\epsilon},u)q_t^{\epsilon}(\d u)-\int_Uf(X_t,\xi_t,u)q_t(\d u)\right)\d t \right].
				\end{aligned}
			\end{equation*}
			Using \lemref{diff}, \lemref{dif} and \assref{ass}, one can derive
			\begin{equation*}
				\begin{aligned}
					0&\leq o(\epsilon)+\epsilon\E\left\{\pa_xg(X_T,\mu_T)\cdot V_T+\int_0^T\int_U\pa_xf(X_t,\xi_t,u)\cdot V_tq_t(\d u)\d t \right.\\
					&\quad +\int_0^T\left(\int_U f(X_t,\xi_t,u)v_t(\d u)-\int_0^T\int_Uf(X_t,\xi_t,u)q_t(\d u)\right)\d t\\
					&\quad+\E'\left[\pa_x\left(\frac{\delta g}{\delta\mu}(X_T,\mu_T)\right)(X_T')\cdot V_T'+\int_0^T\int_U\pa_x\left(\frac{\delta\tf}{\delta\xi}(X_s,\xi_s,u)\right)(X_s',q_s')\cdot V_t'q_t(\d u)\d t\right.\\
					&\quad+\left.\left.\int_0^T\int_U\pa_q\left(\frac{\delta\tf}{\delta\xi}(X_t,\xi_t,u)\right)(X_t',q_t')(v_t'-q_t')q_t(\d u)\d t\right]\right\}.
				\end{aligned}
			\end{equation*}
			The desired result holds by taking $\epsilon\downarrow0$ on both sides of the inequality above.
		\end{proof}
		
		{
			
			\begin{proof}[Proof of \lemref{law_representation}]
				By the classical SDE theory, the process $\hat X=(\hat X_t)_{t\in[0,T]}$ solves \equref{SDE_hat} and it only suffices to show that $\xi$ corresponds to the conditional joint law. Let $h_1:D([0,T]\times\R^n)\times\mathcal{Q}\to\R$ and $h_2:\mathscr{N}(\Pi_Z)\to\R$ (see Page 6 of Bo et al. \cite{BWWY} for the detailed definition of $\mathscr{N}(\Pi_Z)$) be (Borel) measurable mappings. By the identical law condition, we deduce that
				{\small \begin{align*}
						&\E^{\hat P}\left[\int_{D([0,T];\R^n)}\int_{\mathcal{Q}}h_1(x,q)\hat\xi(\d x,\d q)h_2(\hat N)\right]=\E\left[\int_{D([0,T];\R^n)}\int_{\mathcal{Q}}h_1(x,q)\xi(\d x,\d q)h_2(N)\right]\\
						&\qquad=\E\left[\E\left[h_1(X,q)\middle|N\right]h_2(N)\right]=\E^{\hat P}\left[\E^{\hat P}\left[h_1(\hat X,\hat q)\middle|\hat N\right]h_2(\hat N)\right],
				\end{align*}}where, in the last equality, we have used $\Law(X,q,N)=\Law^{\hat P}(\hat X,\hat q,\hat N)$ as well as $\Law((X,q)|N)=\Law^{\hat P}((\hat X,\hat q)|\hat N)$ for $\Law(N)(\Law^{\hat P}(\hat N))$-$\as$ $n\in\mathscr{N}(\Pi_z)$. Note that the conditional law can be expressed as a Borel mapping of $n\in\mathscr{N}(\Pi_Z)$ and the above equality should therefore be understood as an identity between these Borel mappings on the canonical space $\mathscr{N}(\Pi_Z)$ of $N(\hat N)$. Consequently, by the arbitrariness of $h_1$ and $h_2$, we get 
				\begin{align*}
					\hat\xi(\d x,\d q)=\Law^{\hat{P}}((\hat X,\hat q)|\hat N)(\d x,\d q),~ \hat P\text{-}\as.
				\end{align*}
				Furthermore, if $q$ is continuous, we can choose a countable subset of $[0,T]$ and use the c\`{a}dl\`{a}g property of $\hat X$ to derive
				$\hat P\left(\hat\xi_t=\Law^{\hat P}((X_t,q_t)|N),\forall t\in [0,T]\right)=1$. It remains to show that $\hat\xi_t$ is $\hat\G_t$-measurable. To this end, we define 
				\begin{align*}
					\mathcal{E}:=\left\{E=E_1\cap E_2: E_1\in \G_t,E_2=\{\hat N((t,s]\times A)\in \mathbb{T}\},0\leq t<s\leq T, A\in\mathscr{Z}, \mathbb{T}\in\mathcal{B}([0,T])\right\}.
				\end{align*}
				Note that $N((t,s]\times A)$ is $P$-independent of $\sigma(X_r,q_r,W_r,N_r;r\leq t)$ for $0\leq t<s\leq T$ and $A\in\mathscr{Z}$. Hence $\hat N((t,s]\times A)$ is also $\hat P$-independent of $\sigma(\hat X_r,\hat q_r,\hat W_r,\hat N_r;r\leq t)$. It then holds that
				{\small\begin{align*}
						\E^{\hat P}\left[\xi_t(D)\boldsymbol{1}_E\right]&=\E^{\hat P}\left[\E^{\hat P}\left[\boldsymbol{1}_D(\hat X_t,\hat q_t)\middle|\hat N\right]\boldsymbol{1}_E\right]=\E^{\hat P}\left[\boldsymbol{1}_D(\hat X_t,\hat q_t)\boldsymbol{1}_{E_1}\boldsymbol{1}_{E_2}\right]=\E^{\hat P}\left[\boldsymbol{1}_D(\hat X_t,\hat q_t)\boldsymbol{1}_{E_1}\right]\hat P(E_2)\\
						&=\E^{\hat P}\left[\E^{\hat P}\left[\boldsymbol{1}_D(\hat X_t,\hat q_t)\middle|\hat\G_t\right]\boldsymbol{1}_{E_1}\right]\hat P(E_2)=\E^{\hat P}\left[\E^{\hat P}\left[\boldsymbol{1}_D(\hat X_t,\hat q_t)\middle|\hat\G_t\right]\boldsymbol{1}_E\right],
				\end{align*}}for every $D\subset D([0,T];\R^n)\times\mathcal{Q}$ and $E\in\mathcal{E}$ being Borel measurable. Here, in the third and last equality, we have used the fact that $E_2$ is $\hat P$ independent of $\sigma(X_r,q_r,W_r,N_r;r\leq t)$. As $\mathcal{E}$ forms a $\pi$-system and generates $\sigma(\hat N)$, we conclude by $\pi$-$\lambda$ theorem and the arbitrariness of $D,E$  that
				\begin{align}\label{xi_t_representation}
					\hat\xi_t(\d t,\d u)=\Law^{\hat{P}}((\hat X_t,\hat q_t)|\hat\G_t),~\hat{P}\text{-}\as.
				\end{align}
				That is, $\hat\xi_t$ is $\hat\G_t$-measurable, and the proof is complete.
			\end{proof}
			
			\begin{proof}[Proof of \lemref{continuous_control}]\quad
				Let $q^n$ be constructed as in Equation (3.8) of Ma and Yong \cite{Majin}. By the same argument, we deduce that outside a $\tt m\otimes \Pb$-null set $\mathcal{N}\subset[0,T]\times \Omega$, $\ie$ $\forall (t,\omega)\notin\mathcal{N}$, \begin{align}\label{pointwise_convergence}
					\lim_{n\to\infty}\int_U f(u)q_t^n(\omega,\d u)=\int_U f(u)q_t(\omega,\d u)
				\end{align}
				holds for all $f\in C(U;\R^l)$ and $l\in\mathbb{N}$. Let $\xi_t(\omega)=\Law((X_t,q_t)|\G_t)(\omega)$, $\mu_t(\omega)=\Law(X_t|\G_t)$ and $\xi_t^n(\omega)=\Law((X_t^n,q_t^n)|\G_t)(\omega)$, $\mu_t^n(\omega)=\Law(X_t^n|\G_t)$. It follows from \eqref{pointwise_convergence} that, for all
				$(t,\omega)\notin\mathcal N$,
				\begin{align}
					\lim_{n\to\infty}
					\int_U \tilde b\bigl(X_t(\omega),\xi_t(\omega),u\bigr)\,
					q_t^n(\omega,\d u)
					=
					\int_U \tilde b\bigl(X_t(\omega),\xi_t(\omega),u\bigr)\,
					q_t(\omega,\d u).
				\end{align}
				Similar results hold for $\ts$, $\tg(\cdot,z)$, and $\tf$. Dominated convergence theorem implies that
				\begin{align*}
					\lim_{n\to\infty}\int_U\phi(X_t,\xi_t,u)q_t^n(\d u)=\int_U\phi(X_t,\xi_t,u)q_t(\d u)~~~\text{in}~L^2([0,T]\times\Omega) 
				\end{align*}
				holds for $\phi\in\{\tb,\ts,\tg(\cdot,z),\tf\}$. We then define 
				\begin{equation*}
					\left\{\begin{aligned}
						\Delta\tb(t)&:=\tb (X_t^n,\xi_t^n,u)-\tb (X_t,\xi_t,u),~~\Delta\ts(t):=\ts(X_t^n,\xi_t^n,u)-\ts (X_t,\xi_t,u),\\
						\Delta\tg(t,z)&:=\tg (X_t^n,\xi_t^n,u,z)-\tg (X_t,\xi_t,u,z),~~\Delta\tf(t):=\tf (X_t^n,\xi_t^n,u)-\tf (X_t,\xi_t,u).
					\end{aligned}\right.
				\end{equation*}
				Thanks to \assref{extra_ass}, it holds that
				\begin{align}
					|\Delta \tb(t)|&\leq\left|b_1(X_t^n,\mu_t^n,u)-b_1(X_t,\mu_t,u)\right|\nonumber\\
					&\quad+\left|\int_{\R^n\times\Pc(U)}\int_U(b_2(X_t^n,\mu_t^n,u)(x',u')-b_2(X_t,\mu_t,u)(x',u'))q'(\d u')\xi_t^n(\d x',\d q')\right|\nonumber\\
					&\quad+\left|\int_{\R^n\times\Pc(U)}\int_Ub_2(X_t,\mu_t,u)(x',u')q'(\d u')(\xi_t^n-\xi_t)(\d x',\d q')\right|\nonumber\\
					&:= I_1+I_2+I_3.\label{Delta_estimate}
				\end{align}
				By \assref{extra_ass} again, we have \begin{align}\label{I_12_estimate}
					I_1+I_2\leq L\left(|X_t^n-X_t|+\mathcal{W}_{2,\R^n}(\mu^n,\mu_t)\right),
				\end{align}
				and
				\begin{align}
					I_3&=\left|\E'\left[\int_Ub_2(X_t,\mu_t,u)(X_t',u')q_t'(\d u)-\int_Ub_2(X_t,\mu_t,u)({X_t^n}',u'){q_t^n}'(\d u)\right]\right|\nonumber\\
					&\leq L\E\left[\left|X_t^n-X_t\right|\middle|\G_t\right]+\E'\left[\int_Ub_2(X_t,\mu_t,u)(X_t',u')(q_t'-{q_t^n}')(\d u')\right]\nonumber\\
					&:=I_{31}+I_{32}\label{I_3_estimate}.
				\end{align}
				Here and in the sequel, the generic constant $L>0$ may vary from line to line. Recall that $q'$, ${q^n}'$ are independent copies of $q$, $q^n$ respectively and therefore we derive $I_{32}\to 0$ uniformly due to the uniform continuity of $b_2$ with respect to $u'$. Similar results also hold for $\Delta\ts(t)$ and $\Delta\tg(t,z)$.
				
				Applying It\^{o}'s formula to $|X_t^n-X_t|^2$, we get 
				{\small\begin{align*}
						|X_t^n-X_t|^2&=2\int_0^t\left(\int_U(X_s^n-X_s)^{\T}\Delta\tb(s) q_s^n(\d u)+\int_U(X_s^n-X_s)^{\T}\tb(X_s,\xi_s,u)(q_s^n-q_s)(\d u))\right)\d s\\
						&\quad+2\int_0^t\left(\int_U(X_s^n-X_s)^{\T}\Delta\ts(s) q_s^n(\d u)+\int_U(X_s^n-X_s)^{\T}\ts(X_s,\xi_s,u)(q_s^n-q_s)(\d u))\right)\d W_s\\
						&\quad+\int_0^t\left(\int_U\Delta\ts(s)q_s^n(\d u)+\int_U\ts(X_s,\xi_s,u)(q_s^n-q_s)(\d u)\right)^2\d s\\
						&\quad+\int_0^t\int_Z\left(\int_U\Delta\tg(s,z)q_s^n(\d u)+\int_U\tg(X_s,\xi_s,u,z)(q_s^n-q_s)(\d u)\right)^2N(\d z,\d s)\\
						&\quad-2\int_0^t\int_Z\left(\int_U(X_s^n-X_s)^{\T}\Delta\tg(s,z)q_s^n(\d u)\right.\\
						&\qquad\left.+\int_U(X_s^n-X_s)^{\T}\tg(X_s,\xi_s,u,z)(q_s^n-q_s)(\d u)\right)\lambda(\d z)\d s.
				\end{align*}}Applying Burkholder-Davis-Gundy inequality, Cauchy-Schwartz inequality and Gronwall inequality and combing \equref{Delta_estimate}, \equref{I_12_estimate} and \equref{I_3_estimate} altogether, we can  conclude that
				\begin{align*}
					\lim_{n\to\infty}\E\left[\sup_{t\in [0,T]}|X_t-X_t^n|^2\right]=0.
				\end{align*}
				For the value function equivalence, on one hand, note that by \equref{local_Lip} in \assref{extra_ass}, the estimation for $\Delta\tf(t)$ holds:
				{\small\begin{align*}
						|\Delta\tf(t)|&\leq|f_1(X_t^n,\mu_t^n,u)-f_1(X_t,\mu_t,u)|\\
						&\quad+\left|\int_{\R^n\times\Pc(U)}\int_U(f_2(X_t^n,\mu_t^n,u)(x',u')-f_2(X_t,\mu_t,u))(x',u')q'(\d u')\xi_t^n(\d x',\d q')\right|\nonumber\\
						&\quad+\left|\int_{\R^n\times\Pc(U)}\int_Uf _2(X_t,\mu_t,u)(x',u')q'(\d u')(\xi_t^n-\xi_t)(\d x',\d q')\right|\nonumber\\
						&\leq L(1+|X_t^n|+|X_t|+M_2(\mu_t^n)+M_2(\mu_t))\left(|X_t^n-X_t|+\mathcal{W}_{1,\R^n}(\mu_t^n,\mu_t)+\E\left[|X_t^n-X_t|\middle|\G_t\right]\right)\nonumber\\&
						\quad+\underbrace{\E'\left[\int_Ub_2(X_t,\mu_t,u)(X_t',u')(q_t'-{q_t^n}')(\d u')\right]}_{I_f(t)}.
				\end{align*}}Similar to the case of $\Delta\tb$, $\int_UI_f(t)q_t^n(\d u)\to 0$ in $L^1[0,T]\times\Omega)$ (by the growth condition in \assref{extra_ass}, \equref{pointwise_convergence} and dominated convergence theorem).
				On the other hand, since $X^n=(X_t^n)_{t\in [0,T]}$ converges to $X=(X_t)_{t\in [0,T]}$ in $L^2(\Omega;D([0,T];\R^n))$ and $D([0,T];\R^n)$ is separable, we can extract a subsequence $X^{n_k}=(X_t^{n_k})_{t\in [0,T]}$ such that $X^{n_k}\to X$ in $D([0,T];\R^n)$, $\Pb$-$\as$, which implies $X^{n_k}_T\to X_T$ and $\mathcal{W}_{1;\R}(\mu_T^{n_k},\mu_T)\to 0$ since every continuous strictly increasing bijection $\lambda$ on $[0,T]$ must satisfy $\lambda(T)=T$. Simple calculations with \equref{local_Lip} in \assref{extra_ass} yield that
				\begin{align*}    &\left|\int_0^T\int_U\tf(X_t^{n_k},\mu_t^{n_k},u)q_t^{n_k}(\d u)\d t-\int_0^T\int_U\tf(X_t,\mu_t,u)q_t(\d u)\d t+g(X_T^{n_k},\mu_T^{n_k})-g(X_T,\mu_T)\right|\\
					\leq&\int_0^T\int_U\left|\Delta \tf(t)\right|q_t^{n_k}(\d u)\d t+\left|\int_0^T\int_U\tf(X_t,\mu_t,u)(q_t^{n_k}-q_t)(\d u)\d t\right|+\left|g(X_T^{n_k},\mu_T^{n_k})-g(X_T,\mu_T)\right|\\
					\leq&L\int_0^T(1+|X_t^{n_k}|+|X_t|+M_2(\mu_t^{n_k})+M_2(\mu_t))\left(|X_t^n-X_t|+\mathcal{W}_{2,\R^n}(\mu_t^n,\mu_t)+\E\left[|X_t^n-X_t|\middle|\G_t\right]\right)\d t\\
					&+\left|\int_0^T\int_U I_f(t)q_t^n(\d t)\d t\right|+\left|\int_0^T\int_U\tf(X_t,\mu_t,u)(q_t^{n_k}-q_t)(\d u)\d t\right|\\
					&+L(1+|X_T^{n_k}|+|X_T|+M_2(\mu_T^{n_k})+M_2(\mu_T))(|X_T^{n_k}-X_T|+\mathcal{W}_{1,\R^n}(\mu_T^{n_k},\mu_T))\to 0,~\Pb\text{-}\as.
				\end{align*}
				Consequently, by dominated convergence theorem, we have\begin{align*}
					\lim_{n\to\infty}|\mathcal{J}(q^n)-\mathcal{J}(q)|=0,
				\end{align*}
				which verifies the equivalence between value functions in \equref{value_continuity}. 
			\end{proof}
			
			\begin{proof}[Proof of \lemref{L2con}]\quad
				{
					Let $\Omega_0\subset\Omega$ be the set of sample paths such that for every $\omega\in\Omega_0$, the jumping times of the Poisson random measure $N(\omega)$ are finite and are denoted by $0\leq \tau_1<\tau_2<\cdots<\tau_k\leq T$, where $\{\tau_i\}_{i=1}^k$ and $n$ may depend on $\omega$. Due to the finite intensity of $N$, $\Pb(\Omega_0)=1$. Note that $q=(q_t)$ has continuous paths and $X_t$ is continuous in $t\in [\tau_i,\tau_{i+1})$ for every $\omega\in\Omega_0$ and hence $\xi_t(\omega)$ is also continuous in  $t\in [\tau_i,\tau_{i+1})$. We first show that for any $\phi\in L^2[0,T]$, the following convergence holds for every $\omega\in\Omega_0$\begin{align}\label{L2weakcon}
						\int_0^T M_{\tilde b}^n(t,\omega)\phi(t)\d t\to \int_0^T M_{\tilde b}(t,\omega)\phi(t)\d t,~n\to\infty,
					\end{align}
					which is equivalent to assert that for every $\omega\in\Omega_0$, $M_{\tilde b}^n(\cdot,\omega)\overset{w}{\to} M_{\tilde b}(\cdot,\omega)$ in $L^2([0,T])$. By definition and (B.3) in Subsection~\ref{sec:extendedMFCPoiCommon}, we derive that\begin{align}\label{uniform_bound}
						\sup_n\left|M_{\tilde b}^n(t,\omega)\right|\vee \left|M_{\tilde b}(t,\omega)\right|\leq K(1+|X_t(\omega)|+M_2(\xi_t(\omega))).
					\end{align}
					By standard moment estimation, we can derive that\begin{align}
						\E\left[\sup_{t\in[0,T]}|X_t|+\sup_{t\in [0,T]}M_2(\xi_t)\right]<\infty.
					\end{align}
					Therefore, there exists a subset $\Omega_1\subset\Omega$ with $\Pb(\Omega_1)$, such that for all $\omega\in\Omega_1$\begin{align}\label{sup_finite}
						\sup_{t\in [0,T]}|X_t(\omega)|+\sup_{t\in [0,T]}M_2(\xi_t(\omega))<\infty.
					\end{align}
					Furthermore, for any $\epsilon>0$, by Lusin's theorem, there exists a continuous function $\psi\in C([0,T];\R)$ and a subset $E\subset [0,T]$ with $\int_{[0,T]\backslash E}|\phi(t)|\d t<\epsilon$ such that $\psi|_E=\phi|_E$ and $|\psi(t)|\leq |\phi(t)|,~t\in [0,T]$. Simple calculations yield that\begin{align}
						\left|\int_0^T \left(M_{\tilde b}^n(t,\omega)-M_{\tilde b}(t,\omega)\right)\phi(t)\d t\right|&\leq \int_0^T\left(M_{\tilde b}^n(t,\omega)-M_{\tilde b}(t,\omega)\right)\left(\phi(t)-\psi(t)\right)\d t\nonumber\\
						&\quad+\int_0^T\left(M_{\tilde b}^n(t,\omega)-M_{\tilde b}(t,\omega)\right)\psi(t)\d t\nonumber\\
						&=I_1+I_2.\label{L2_distance}
					\end{align}
					For $I_2$, note that\begin{align}
						I_2=\sum_{i=1}^{k-1}\int_{\tau_i}^{\tau_{i+1}}\left(M_{\tilde b}^n(t,\omega)-M_{\tilde b}(t,\omega)\right)\psi(t)\d t.
					\end{align}
					As $q^n\to q$ in $\mathcal{Q}$ $\as$ and $\psi(t),M_{\tilde b}^n(t,\omega),M_{\tilde b}(t,\omega)$ are continuous in $[\tau_i,\tau_{i+1}]$, there exists $N\in\mathbb{N}_+$ such that for $n>N$, $|I_2|<\epsilon$. \\
					For $I_1$, just note that\begin{align*}
						|I_1|&\leq \int_0^T\left|M_{\tilde b}^n(t,\omega)-M_{\tilde b}(t,\omega)\right|\left|\phi(t)-\psi(t)\right|\d t\\
						&\leq2\left(\sup_{t\in[0,T]}|X_t|+\sup_{t\in [0,T]}M_2(\xi_t)\right)\int_0^T|\phi(t)-\psi(t)|\d t\\
						&\leq 4\left(\sup_{t\in[0,T]}|X_t|+\sup_{t\in [0,T]}M_2(\xi_t)\right)\int_{[0,T]\backslash E}|\phi(t)|\d t\\
						&\leq 4\left(\sup_{t\in[0,T]}|X_t|+\sup_{t\in [0,T]}M_2(\xi_t)\right)\epsilon.
					\end{align*}
					Combining the above estimations together, we can deduce that\begin{align}
						|I_1|+|I_2|\leq 4\left(\sup_{t\in[0,T]}|X_t|+\sup_{t\in [0,T]}M_2(\xi_t)+\frac14\right)\epsilon,
					\end{align}
					for all $n>N$, which yields the desired result \eqref{L2weakcon} for every $\omega\in\Omega_0\cap\Omega_1$. Then for any $\phi\in L^2([0,T]\times\Omega)$,  there exists a subset $\Omega_2\subset\Omega$ with $\Pb(\Omega_2)=1$ such that $\phi(\cdot,\omega)\in L^2[0,T]$ for $\omega\in\Omega_2$. Then for $\omega\in\Omega_0\cap\Omega_1\cap\Omega_2$, it holds by \eqref{L2weakcon} that\begin{align*}
						\int_0^T M_{\tilde b}^n(t,\omega)\phi(t,\omega)\d t\to \int_0^T M_{\tilde b}(t,\omega)\phi(t,\omega)\d t,~n\to\infty.
					\end{align*}
					Consequently, by the dominated convergence theorem, we can conclude $M_{\tilde b}^n\overset{w}{\to}M_{\tilde b}$ in $L^2([0,T]\times\Omega)$ as $n\to\infty$. The same argument can be applied to $\sigma,\gamma$ and  the proof is complete.}
				
			\end{proof}
			\begin{proof}[Proof of \propref{valueeq}]
				In light of the fact that $\delta(\U)\subset\Qb$ and  \equref{value_continuity} in \lemref{continuous_control}, it only suffices to show that\begin{align}\label{continuous_control_value}
					\inf_{q\in \Qb_c}\mathcal{J}(q)\leq\inf_{\alpha\in\U}J(\alpha).
				\end{align}  
				To begin with, consider $q\in\Qb_c$ and $(\alpha^n)_{n\geq1}$ given in \lemref{L2con}. For any $n\geq1$, denote by $X^n=(X_t^n)_{t\in[0,T]}$,  $\xi^n$, $\xi_t^n$ and $\mu_t^n$ the state processes \eqref{eq:seq1} under $\alpha^n$, the joint conditional law $\Law((X^n,\delta_{\alpha_t^n}(\d u)\d t)|N)$,  $\Law((X_t^n,\delta_{\alpha_t^n})|\G_t)$ and the conditional state law $(\Law(X_t^n|\G_t))_{t\in [0,T]}$, respectively.  
				Similar to the proof of Lemma 5.1 in Bo et al. \cite{BWY} and Lemma 3.10 in Bo et al. \cite{BWWY}, we can deduce the tightness of $\{\Pb\circ (X^n)^{-1}:n\in\mathbb{N}\}$ in $\Pc(D([0,T];\R^n))$. Moreover, by Proposition A.2 in Carmona et al. \cite{Carmona2}, we can also derive the tightness of $\{\Pb\circ (\xi^n)^{-1}:n\in\mathbb{N}\}$ in $\Pc(\Pc_2(D([0,T];\R^n))\times\mathcal{Q}))$ Thanks to the compactness of $U$, we can further derive the tightness of $\mathcal{K}=\{\Pb\circ (X^n,\delta_{\alpha_t^n}(\d u)\d t,W,N,\xi^n)^{-1},n\in\mathbb{N}\}$ in $\Pc(D([0,T];\R^n)\times \mathcal{Q}\times C([0,T];\R^d)\times\mathscr{N}(\Pi_Z)\times\Pc_2(D([0,T];\R^n)\times\mathcal{Q}))$. Hence, we can extract a subsequence $\{\Pb\circ (X^{n_k},\delta_{\alpha_t^{n_k}}(\d u)\d t,W,N,\xi^{n_k})^{-1};~n_k\in\mathbb{N},\}\subset\mathcal{K}$ converging weakly to some probability measure $\hat{P}$ on $D([0,T];\R^n)\times \mathcal{Q}\times C([0,T];\R^d)\times\mathscr{N}(\Pi_Z)\times \Pc_2(C([0,T]\times\mathcal{Q})$. 
				
				By Skorokhod representation theorem, we can construct a probability space $(\hat\Omega,\hat{\F},\hat\Pb)$ on which a sequence of processes $\{(\hat{X}^{n_k},\hat{q}^{n_k},\hat{W}^{n_k},\hat{N}^{n_k},\hat\xi^{n_k}),n_k\in\mathbb{N}\}$ and $(\hat X,\hat q,\hat W,\hat N,\hat\xi)$ are defined, such that {\rm (i)} $\hat{\Pb}\circ (\hat{X}^{n_k},\hat{q}^{n_k},\hat{W}^{n_k},\hat{N}^{n_k},\hat\xi^{n_k})^{-1}=\Pb\circ (X^{n_k},\delta_{\alpha_t^{n_k}}(\d u)\d t,W,N,\xi^{n_k})^{-1}$, $\hat P=\hat{\Pb}\circ (\hat X,\hat q,\hat W,\hat N,\hat\xi)^{-1}$; {\rm (ii)} $(\hat{X}^{n_k},\hat{q}^{n_k},\hat{W}^{n_k},\hat{N}^{n_k},\hat\xi^{n_k})\to (\hat X,\hat q,\hat W,\hat N,\hat\xi)$ in $D([0,T];\R^n)\times \mathcal{Q}\times C([0,T];\R^d)\times\mathscr{N}(\Pi_Z)\times\Pc_2(D([0,T];\R^n)\times\mathcal{Q})$ $\hat P$-$\as$.  By \lemref{law_representation}, we know that, $\hat P$-$\as$, $\hat X^{n_k}=(\hat X_t^{n_k})_{t\in [0,T]}$ solves the SDE
				\begin{align*}
					\d \hat{X}_t^{n_k}&=\int_U\tilde b(\hat{X}_t^{n_k},\xi_t^{n_k},u)\hat{q}_t^{n_k}(\d u)\d t+ \int_U\ts(\hat{X}_t^{n_k},\xi_t^{n_k},u)\hat{q}_t^{n_k}(\d u)\d \hat{W}^{n_k}_t\\
					&\quad+\int_U\int_Z\tg(\hat{X}_t^{n_k},\xi_t^{n_k},u,z)\hat{q}_t^{n_k}(\d u)\tilde{\hat{N}}^{n_k}(\d t,\d z),
				\end{align*}
				and $\xi_t^{n_k}=\Law^{\hat{P}}((\hat{X}_t^{n_k},\hat{q}^{n_k}_t)|\hat{\G}_t^{n_k})$, $\mu_t^{n_k}=\Law^{\hat P}(\hat X_t^{n_k}|\hat\G_t^{n_k})$, where $\hat\G_t^{n_k}=\sigma(\hat N^{n_k}((0,s]\times A):s\leq t,A\in\mathscr{Z}\}$. Furthermore, by lemma 2.1 and the proof of Theorem 3.6 in Ma and Yong \cite{Majin}, each $\hat{q}^{n_k}$ has the form \begin{align*}
					\hat q_t^{n_k}(\d u)=\delta_{\hat\alpha_t^{n_k}}(\d u),~\forall t\in [0,T],    
				\end{align*}
				for some $U$-valued process $\hat\alpha^{n_k}=(\hat\alpha_t^{n_k})_{t\in [0,T]}$.  
				
				Define\begin{align*}
					\hat M_{\phi}^{n_k}(t,\hat\omega)=\tb(\hat X_t(\hat\omega),\hat\xi_t(\omega),\hat\alpha_t^{n_k}(\hat\omega)),~\hat M_{\phi}(t,\hat\omega)=\int_U\tb(\hat X_t(\hat\omega),\hat\xi_t(\omega),u)\hat q_t(\hat\omega,\d u).
				\end{align*}
				\lemref{L2con} yields that $M_{\phi}^{n_k}\overset{w}{\to}M_{\phi}$ as $k\to\infty$ for each $\phi\in\{\tb,\ts,\tg(\cdot,z)\}$. Therefore, by Mazur's Theorem (c.f. Brezis \cite{Brezis}), there exist $k$ positive real numbers $\{\lambda_1,\cdots,\lambda_k\}$ satisfying $\sum_{i=1}^{k}\lambda_i=1$ (here the sequence $(\lambda_i)_{i=1}^{n_k}$  may depend on $k$, but we omit it for simplicity) such that, for all $z\in Z$,
				\begin{align}\label{L2strongcon}
					\lim_{k\to\infty}\sum_{\phi\in\{\tb,\ts,\tg(\cdot,z)\}}\left\|\sum_{i=1}^{k}\lambda_{i}\hat M_{\phi}^{n_i}-\hat M_{\phi}\right\|_{L^2([0,T]\times\Omega)}^2=0.
				\end{align}

				Similar to Equations (3.24) and (3.25) in Ma and Yong \cite{Majin}, we can derive \begin{align}\label{delta_WN}
					\lim_{k\to\infty}\E^{\hat P}\left[\sup_{t\in [0,T]}\left|\sum_{i=1}^k\lambda_i\Delta^i(\hat W)(t)\right|^2+\sup_{t\in [0,T]}\left|\sum_{i=1}^k\lambda_i\Delta^i(\hat N)(t)\right|^2\right]=0,
				\end{align}
				where $\Delta^i(\hat W)$ and $\Delta^i(\hat N)$ are defined by
				{\small\begin{align*}
						\Delta^i(\hat W)(t)&:=\int_0^t\int_U\ts(\hat{X}^{n_i}_s,\hat{\xi}_s^{n_i},u)\hat q^{n_i}_s(\d u)\d \hat W_s^{n_i}-\int_0^t\int_U\ts(\hat{X}^{n_i}_s,\hat{\xi}_s^{n_i},u)\hat q^{n_i}_s(\d u)\d \hat W_s,\\
						\Delta^i(\hat N)(t)&:=\int_0^t\int_U\int_Z\tg(\hat X_s^{n_i},\hat\xi_s^{n_i},u)\hat q^{n_i}_s(\d u)\hat N^{n_i}(\d s,\d z)-\int_0^t\int_U\int_Z\tg(\hat X_s^{n_i},\hat\xi_s^{n_i},u)\hat q^{n_i}_s(\d u)\hat N(\d s,\d z).
				\end{align*}}Denote by $\hat\xi_t$ the $t$-marginal of $\hat\xi$ and $\hat\mu_t$ the first marginal of $\hat\xi_t$, $\ie$, $\hat\mu_t=\hat\xi_t|_{\R^n}$. 
				By the separation condition in \assref{extra_ass}, it holds that
				{\small\begin{align*} 
						&\sum_{i=1}^{k}\lambda_i\left(\tb(\hat X_t^{n_i},\hat\xi_t^{n_i},\hat\alpha_t^{n_i})-\hat M_{\tb}^{n_i}(t)\right)=\sum_{i=1}^{k}\lambda_i\left(b_1(\hat X_t^{n_i},\hat\mu_t^{n_i},\alpha_t^{n_i})-b_1(\hat X_t,\hat \mu_t,\hat\alpha_t^{n_i})\right)\\
						&\quad+\sum_{i=1}^{k}\lambda_i\int_{\R^n\times\Pc(U)}\int_Ub_2(\hat X_t,\hat \mu_t,\hat\alpha_t^{n_i})(x',u')q'(\d u')(\hat\xi_t^{n_i}-\hat\xi_t)(\d x',\d q')\\
						&\quad+\sum_{i=1}^{k}\lambda_i\int_{\R^n\times\Pc(U)}\int_U\left(b_2(\hat X_t^{n_i},\hat\mu_t^{n_i},\hat\alpha_t^{n_i})(x',u')-b_2(\hat X_t,\hat\mu_t,\hat\alpha_t^{n_i})(x',u')\right)q'(\d u')\hat\xi_t^{n_i}(\d x',\d q')\\
						&:=I_1+I_2+I_3.
				\end{align*}}
				On one hand, by the Lipschitz property of $b_1$ and $b_2$, we conclude that
				\begin{align}\label{I_13_estimate}
					|I_1|+|I_3|\leq L\sum_{i=1}^{k}\lambda_i\left(\left|\hat X_t^{n_i}-\hat X_t\right|+\mathcal{W}_1(\hat\mu_t^{n_i},\hat\mu_t)\right).
				\end{align}
				Here and in the sequel, $L>0$ is a generic constant that may vary from line to line. As $\hat\xi^{n_i}\to\hat\xi$ in $\Pc_2(D([0,T];\R^n)\times\mathcal{Q})$ $\hat P$-$\as$ and $D([0,T];\R^n)\times\mathcal{Q}\in(x,q)\mapsto \int_{\mathbb{T}}\int_Ub_2(\hat X_t,\hat\mu_t,\hat\alpha_t^{n_i}(x(t),u)q_t(\d u)\d t$ is continuous for any  Borel measurable $\mathbb{T}\subset [0,T]$ with positive Lebesgue measure (see the proof of Lemma 5.2 in Bo et al. \cite{BWY}), we deduce from Fubini' theorem and dominated convergence theorem that, $\hat P$-$\as$,
				{\small\begin{align}\label{tb_convergence}    &\left|\int_{\mathbb{T}}\sum_{i=1}^{k}\lambda_i\left(\tb(\hat X_t^{n_i},\hat\xi_t^{n_i},\hat\alpha_t^{n_i})-\hat M_{\tb}^{n_i}(t)\right)\d t\right|\nonumber\\
						&\quad\leq L\int_{\mathbb{T}}\sum_{i=1}^k\left(\left|\hat X_t^{n_i}-\hat X_t\right|+\mathcal{W}_1(\hat\mu_t^{n_i},\hat\mu_t)\right)\d t+\left|\int_{\mathbb{T}}I_2\d t\right|\to 0,\quad k\to\infty,
				\end{align}}Similarly, the convergence \equref{tb_convergence} also holds for $\ts$ and $\tg(\cdot,z)$. Consequently, similar to the proof of Theorem 3.4 in Ma and Yong \cite{Majin}, it follows from \equref{L2strongcon}, \equref{delta_WN} and \equref{tb_convergence}  that $\hat X$ satisfies:
				\begin{align*}
					\d \hat X_t&=\int_U\tb(\hat X_t,\hat\xi_t,u)\hat q_t(\d u)\d t+\int_U\ts(\hat X_t,\hat\xi_t,u)\hat q_t(\d u)\d\hat W_t+\int_U\tg(\hat X_t,\hat\xi_t,u,z)\hat q_t(\d u)\tilde{\hat{N}}(\d t,\d z).
				\end{align*}
				We next prove 
				\begin{align}\label{hatxi_t_representation}
					\hat P\left(\hat\xi_t=\Law^{\hat P}\left((\hat X_t,\hat q_t)\middle|\hat\G_t\right),\forall t\in [0,T]\right)=1,
				\end{align}
				where $\hat\G_t=\sigma(\hat N((0,s]\times A),0<s\leq T,~A\in\mathscr{Z})$ for $t\in[0,T]$. Let $h_1:D([0,T];\R^n)\times\mathcal{Q}\to\R$ and $h_2:\mathscr{N}(\Pi_Z)\to\R$ be bounded continuous mappings. It holds that
				\begin{align*}
					\E^{\hat P}\left[\int_{D([0,T];\R^n)}\int_{\mathcal{Q}}h_1(x,q)\hat\xi(\d x,\d q)h_2(\hat N)\right]&=\lim_{k\to\infty}\E^{\hat P}\left[\int_{D([0,T];\R^n)}\int_{\mathcal{Q}}h_1(x,q)\hat\xi^{n_k}(\d x,\d q)h_2(\hat N^{n_k})\right]\\
					&=\lim_{k\to\infty}\E^{\hat P}\left[\E^{\hat P}\left[h_1(\hat X^{n_k},\hat q^{n_k})\middle|\hat N^{n_k}\right]h_2(\hat N^{n_k})\right]\\
					&=\E^{\hat P}\left[\E^{\hat P}\left[h_1(\hat X,\hat q)\middle|\hat N\right]h_2(\hat N)\right],
				\end{align*}
				where in the first equality we have applied the dominated convergence theorem and in the last equality we have exploited the weak convergence of $\Law^{\hat P}(\hat X^{n_k},\hat q^{n_k},\hat N^{n_k})$ to $\Law^{\hat P}(\hat X,\hat q,\hat N)$. Thanks to the arbitrariness of $h_1,h_2$, we conclude $\hat P(\hat\xi=\Law^{\hat P}((\hat X,\hat q)|\hat N))=1$. Moreover, by an argument similar to that used in the proof of \lemref{law_representation}, we can show that $\hat N((t,s]\times A)$ is $\hat P$-independent of $\sigma(\hat X_r,\hat q_r,\hat W_r;r\leq t)$ for $0\leq t<s\leq T$ and $A\in\mathscr{Z}$. Consequently, by the continuity of $\hat q$ and the same proof of \equref{xi_t_representation}, we deduce \equref{hatxi_t_representation}.
				
				Finally, in light of the weak uniqueness (c.f. Definition 4.4 in EI Karoui et al. \cite{Karoui}), we get by dominated convergence theorem that
				\begin{align*}
					\mathcal{J}(q)&=\E^{\hat P}\left[\int_0^T\int_U\tf(\hat X_t,\hat\xi_t,u)\hat q_t(\d u)\d t+g(\hat X_T,\hat\mu_T)\right]\\
					&=\lim_{k\to\infty}\E^{\hat P}\left[\int_0^T\int_U\tf(\hat X_t^{n_k},\hat\xi_t^{n_k},u)\hat q_t^{n_k}(\d u)\d t+g(\hat X_T^{n_k},\hat\mu_T^{n_k})\right]=\lim_{k\to\infty}J(\alpha^{n_k}),
				\end{align*}
				which verifies \equref{continuous_control_value}. The proof is then complete.
			\end{proof}

		}

		\section{Proofs of Auxiliary Results for Section~\ref{sec:HJB}}\label{sec:appendix-B}
		In this appendix, we collect proofs of some auxiliary results in Section~\ref{sec:HJB}.
		
		\medskip 
		
		{\noindent{\it Proof of \lemref{measure_shift}}.  
			Note that $C_0(\R^n)\subset C_b(\R^n)$ and thus $I^{\mu,\hat u,z}\nu$ can be interpreted as a signed Radon measure on $\R^n$ by Riesz representation theorem.  Define $\langle g,\mu\rangle:=\int_{\R^n}g(x)\mu(\d x)$ for any $\mu\in\Pc_2(\R^n)$. 
			
			Let us verify that $I^{\mu, \hat u, z, *}$ maps each $\nu\in\Pc_2(\R^n)$ into $\Pc_2(\R^n)$ for any $\hat u\in\hat \U(\R^n)$. In fact, it holds that $\langle I^{\mu, \hat u, z}(g),\nu\rangle=\langle g,I^{\mu, \hat u, z, *}\nu\rangle$ for all $g\in C_b(\R^n)$. As a result, $I^{\mu, \hat u, z, *}\nu$ is a (positive) measure. By considering $g\equiv 1$, it is then a probability measure on $\R^n$. Thus, it remains to show that $I^{\mu, \hat u, z, *}\nu$ belongs to $\Pc_2(\R^n)$ whenever $\nu\in\Pc_2(\R^n)$. However, this can be easily verified by considering $g_n(x)=|x|^2\wedge n$ for $x\in\R^n$, the linear growth of $\gamma$ with respect to $x$ and sending $n\to\infty$. 
			
			For the second assertion, note that, for any $\phi\in C_b(\R^n)$, $\Pb$-$\as$.
			\begin{equation*}
				\begin{aligned}
					\langle \phi, I^{\mu,\hat\alpha, z, *}\mu\rangle &= \langle  I^{\mu,\hat\alpha, z}(\phi),\mu\rangle=\int_{\R^n}\int_U \phi(x+\gamma(x,\rho,u,z))\hat\alpha(x,\d u)\mu(\d x)\\
					&=\int_{\R^n}\E\left[\phi(x+\gamma(x,\rho,\alpha,z))|\G,X=x\right]\mu(\d x)=\E\left[\phi(X+\gamma(X,\rho,\alpha,z))|\G\right].
				\end{aligned}
			\end{equation*}
			The desired result then follows from the arbitrariness of $\phi\in C_b(\R^n)$. 
			
			\hfill$\Box$
		}

		\medskip 

		\medskip 
		
		We next prove \propref{prop:diff-ol-fp}. To do it, we need \lemref{lem:diff-ol-fp} below. Recall that $(\mu_s)_{s \geq 0}$ is the solution to~\eqref{equ:extended-FP-mu_t} with $\mu_0 = \mu$ under the lifted randomized policy $\pi_\epsilon$, and that $X^{\mathcal{D}}$ is the solution to~\eqref{extended-MFC-dynamics} under the piecewise constant randomized policy $a^{K, \epsilon}_t = \sum_{k=0}^{K- 1} a_{t_k} {\bf 1}_{t \in [t_k, t_{k+1}]}$ with $a_{t_k}: = a^\epsilon(t_k, X_{t_k}^{\mathcal{D}}, \Law(X_{t_k}^{\mathcal{D}}|\G_{t_k}), I_k) \sim \pi_\epsilon(t_k, X_{t_k}^{\mathcal{D}}, \Law(X_{t_k}^{\mathcal{D}}|\G_{t_k}))$, i.e., for $s \in [t_k, t_{k+1}]$,
		\begin{equation}\label{eq:SDE-constant}
			\begin{aligned}
				X_s^{\mathcal{D}} =& X_{t_k}^{\mathcal{D}} + \int_{t_k}^s  b(X_r^{\mathcal{D}}, \Law((X_r^{\mathcal{D}}, a_{t_k})|\G_r), a_{t_k})\d r + \int_{t_k}^s\sigma(X_r^{\mathcal{D}}, \Law((X_r^{\mathcal{D}}, a_{t_k})|\G_r), a_{t_k}) \d W_r\\ & + \int_{t_k}^s \int_Z \gamma(X_{r-}^{\mathcal{D}}, \Law((X_{r-}^{\mathcal{D}}, a_{t_k-}|\G_{r-}),a_{t_k-}, z) \widetilde N(\d r, \d z).
			\end{aligned}
		\end{equation}

		\begin{lemma}\label{lem:diff-ol-fp} Let {\rm \assref{ass}} and {\rm \assref{ass:regularity-Jh}} hold. Then, for any $h \in C^{1, 1}(\mathcal{P}(\R^n))$ in \assref{ass:regularity-Jh}, there exists a constant $C$ depending only on $T$, $b, \sigma, \gamma, \lambda$ and $h$ such that, for time grid $\mathcal{D}$, $\sup_{t \in [0, T]}\left|\E[h(\Law(X_t^{\mathcal{D}}|\G_t)) -h(\mu_t)]\right| \leq C |\mathcal{D}|^{1/2}$ with $|\mathcal{D}|: = \max_{0 \leq k < K-1}|t_{k+1} - t_k|$.
		\end{lemma}

		\noindent{\it Proof of \lemref{lem:diff-ol-fp}.}\quad From now on, we consider only the time grid points $t_k$ without loss of generality, and  use $C$ to denote a generic constant, which may change from line to line but only depends on $T$, $b, \sigma, \gamma, \lambda$ and $h$.  As $J_h$ satisfies the linear PDE~\eqref{PDE:Jh} on Wasserstein space, the Feynman-Kac's formula yields $\E[h(\mu_{t_k})] = J_{h}(0, \mu)$. It then follows that
		\begin{equation*}
			\begin{aligned}
				\E\left[h(\Law(X_{t_k}^{\mathcal{D}}|\G_{t_k})) - h(\mu_{t_k})\right] & = \E\left[J_h(t_k, \Law(X_{t_k}^{\mathcal{D}}|\G_{t_k}))) - J_h(0, \mu)\right]\\
				& = \sum_{i=0}^{k-1}\E\left[J_h(t_{i+1}, \Law(X_{t_{i+1}}^{\mathcal{D}}|\G_{t_{i+1}})) - J_h(t_i, \Law(X_{t_i}^{\mathcal{D}}|\G_{t_i}))\right] =: \sum_{i=0}^{k-1} e_i.
			\end{aligned}
		\end{equation*}
		It thus suffices to estimate the term $e_i$. By applying It\^o's lemma (c.f. Theorem 2.7 in Guo and Zhang~\cite{Guoxin}) to $J_h(r, \Law(X_{r}^{\mathcal{D}}|\G_{r}))$ between $t_i$ and $t_{i+1}$, it holds that
		\begin{equation*}
			\begin{aligned}
				e_i =& \E\bigg[\int_{t_i}^{t_{i+1}} \bigg( \pa_t J_h(r, \Law(X_{r}^{\mathcal{D}}|\G_{r})) + (b(X_r^{\mathcal{D}}, \Law((X_r^{\mathcal{D}}, a_{t_i})|\G_r), a_{t_i}) \\
				&\quad -\langle\gamma(X_r^{\mathcal{D}}, \Law((X_r^{\mathcal{D}}, a_{t_i})|\G_r), a_{t_k}), \lambda\rangle) \pa_\mu J_h(r,\Law(X_{r}^{\mathcal{D}}|\G_{r}))(X_{r}^{\mathcal{D}}) \\
				& \quad + \frac{1}{2} \tr\left(\sigma\sigma^{\T}(X_r^{\mathcal{D}}, \Law((X_r^{\mathcal{D}}, a_{t_i})|\G_r), a_{t_i}) \pa_x\pa_\mu J_h(r,\Law(X_{r}^{\mathcal{D}}|\G_{r}))(X_{r}^{\mathcal{D}})\right)\\
				& \quad + \int_Z \left(J(r,\Law(X_{r-}^{\mathcal{D}}+\gamma(X_{r-}^{\mathcal{D}},\Law((X_{r-}^{\mathcal{D}}, a_{t_i})|\G_{r-}),a_{t_i},z))-J(r,\Law(X_{r-}^{\mathcal{D}})|\G_{r-})\right)\lambda(\d z)\bigg)\d r\bigg].
			\end{aligned}
		\end{equation*}
		Note that $\E[\pa_t J_h(t_i,\Law(X_{t_i}^{\mathcal{D}}|\G_{r}))+{\rm B}^{\pi(t_i, \Law(X_{t_i}^{\mathcal{D}}|\G_{t_i}))}J_h(t_i,\Law(X_{t_i}^{\mathcal{D}}|\G_{t_i}))]=0$ for $0 \leq i \leq k-1$. Subtracting it on both sides of the above equation, we have
		\begin{align*}
			e_i =& \E\bigg[\int_{t_i}^{t_{i+1}} \bigg( \pa_t J_h(r, \Law(X_{r}^{\mathcal{D}}|\G_{r})) - \pa_t J_h(t_i, \Law(X_{t_i}^{\mathcal{D}}|\G_{t_i}))\big)\d r\bigg]\\
			&\quad + \E\bigg[\int_{t_i}^{t_{i+1}}\bigg(b(X_r^{\mathcal{D}}, \Law((X_r^{\mathcal{D}}, a_{t_i})|\G_r), a_{t_i})  \pa_\mu J_h(r,\Law(X_{r}^{\mathcal{D}}|\G_{r}))(X_{r}^{\mathcal{D}})\\
			& \quad \quad \quad- (b(X_{t_i}^{\mathcal{D}}, \Law((X_{t_i}^{\mathcal{D}}, a_{t_i})|\G_{t_i}), a_{t_i})  \pa_\mu J_h(t_i,\Law(X_{t_i}^{\mathcal{D}}|\G_{t_i}))(X_{t_i}^{\mathcal{D}})\bigg)\d r\bigg] \\
			&\quad +\E\bigg[\int_{t_i}^{t_{i+1}} \bigg(-\langle\gamma(X_r^{\mathcal{D}}, \Law((X_r^{\mathcal{D}}, a_{t_i})|\G_r), a_{t_i}), \lambda\rangle) \pa_\mu J_h(r,\Law(X_{r}^{\mathcal{D}}|\G_{r}))(X_{r}^{\mathcal{D}}) \\
			&\quad \quad \quad +\langle\gamma(X_{t_i}^{\mathcal{D}}, \Law((X_{t_i}^{\mathcal{D}}, a_{t_i})|\G_i), a_{t_i}), \lambda\rangle) \pa_\mu J_h(t_i,\Law(X_{t_i}^{\mathcal{D}}|\G_{t_i}))(X_{t_i}^{\mathcal{D}}) \bigg)\d r\bigg]\\
			& \quad + \frac{1}{2}\E\bigg[\int_{t_i}^{t_{i+1}} \bigg(\tr\left(\sigma\sigma^{\T}(X_r^{\G^k}, \Law((X_r^{\mathcal{D}}, a_{t_i})|\G_r), a_{t_i}) \pa_x\pa_\mu J_h(r,\Law(X_{r}^{\mathcal{D}}|\G_{r}))(X_{r}^{\mathcal{D}})\right)\\
			& \quad \quad \quad -  \tr\left(\sigma\sigma^{\T}(X_{t_i}^{\G^k}, \Law((X_{t_i}^{\mathcal{D}}, a_{t_i})|\G_r), a_{t_i}) \pa_x\pa_\mu J_h(t_i,\Law(X_{t_i}^{\mathcal{D}}|\G_{r}))(X_{t_i}^{\mathcal{D}})\right)\bigg)\d r\bigg]\\
			& \quad +\E\bigg[\int_{t_i}^{t_{i+1}}\int_Z \bigg(J(r,\Law(X_{r-}^{\mathcal{D}}+\gamma(X_{r-}^{\mathcal{D}},\Law(X_{r-}^{\mathcal{D}}, a_{t_i})|\G_{r-}),a_{t_i},z))-J(r,\Law(X_{r-}^{\mathcal{D}}|\G_{r-}))\\
			& \quad-  J(t_i,\Law(X_{t_i-}^{\mathcal{D}}+\gamma(X_{t_i-}^{\mathcal{D}},\Law((X_{t_i-}^{\mathcal{D}}, a_{t_i})|\G_{t_i-}),a_{t_i},z))+J(t_i,\Law(X_{t_i-}^{\mathcal{D}}|\G_{t_i-}))\bigg) \lambda(\d z)\d r\bigg].
		\end{align*}
		By standard estimates of~\eqref{eq:SDE-constant}, we obtain, for $r \in [t_i, t_{i+1}]$,
		\begin{equation}\label{equ:mean-square-estimate-XG}
			\E\left[\|\Law((X_r^{\mathcal{D}}, a_{t_i})|\G_r)-\Law((X_{t_i}^{\mathcal{D}}, a_{t_i})|\G_{t_i})\|_{K, \rm FM}\right] \leq \E\left[|X_r^{\mathcal{D}} - X_{t_i}^{\mathcal{D}}|^2\right]^{1/2} \leq C |\mathcal{D}|^{1/2}.
		\end{equation}
		By~\eqref{equ:mean-square-estimate-XG}, Lipschtiz continuity on $b$, $\sigma$, $\gamma$ in \assref{ass}, and continuity on $J_h$, $\pa_t J_h$, $\pa_\mu J_h$, $\pa_x\pa_\mu J_h$ in \assref{ass:regularity-Jh}, we conclude that
		$|e_i| \leq C (t_{i+1} - t_i) |\mathcal{D}|^{1/2}$. The desired result follows from $\sum_{i=0}^{k-1} |e_i| \leq C T |\mathcal{D}|^{1/2}$.

		\hfill$\Box$

		The above {\rm \lemref{lem:diff-ol-fp}} provides the key estimate needed to establish {\rm \propref{prop:diff-ol-fp}}. We now prove {\rm \propref{prop:diff-ol-fp}}. 
		
		\noindent{\it Proof of \propref{prop:diff-ol-fp}.}\quad
		Note that
		\begin{equation*}
			\begin{aligned}
				&J_{ol}(0, \vartheta; a^{K, \epsilon}) - J_{fp}(0, \mu; \pi_\epsilon)
				= \E\left[g\left(X_T^{\mathcal{D}}, \Law(X_T^{\mathcal{D}}|\G_T)\right)\right] - \E\left[\hat g(\mu_T)\right]\\
				&\qquad + \sum_{k=0}^{K-1} \E\left[\int_{t_k}^{t_{k+1}} \left(f(X_s^{\mathcal{D}}, \Law((X_s^{\mathcal{D}}, a_{t_k})|\G_s), a_{t_k}) - \hat f(\mu_s, \pi_{\epsilon, s}(\mu_s))\right)ds\right] =: (I) + \sum_{k=0}^{K-1} (II^k).
			\end{aligned}
		\end{equation*}
		By using {\rm \lemref{lem:diff-ol-fp}} for $h = \hat g$, we have
		\begin{equation}\label{estimate-I}
			\begin{aligned}
				|(I)| = \left|\E[g(X_T^{\mathcal{D}}, \Law(X_T^{\mathcal{D}}|\G_T))] - \E[\hat g(\mu_T)]\right| = \left|\hat g(\Law(X_T^{\mathcal{D}}|\G_T)) - \E[\hat g(\mu_T)]\right| \leq C |\mathcal{D}|^{1/2}.
			\end{aligned}
		\end{equation}
		For the second term $\sum_{k=0}^{K-1} (II^k)$, note that
		\begin{equation}\label{difference-ol-fp}
			\begin{aligned}
				(II^k) 
				= & \E\left[ \int_{t_k}^{t_{k+1}} \big(f(X_s^{\mathcal{D}}, \Law((X_s^{\mathcal{D}}, a_{t_k})|\G_s), a_{t_k}) - f(X_{t_k}^{\mathcal{D}}, \Law((X_{t_k}^{\mathcal{D}}, a_{t_k})|\G_{t_k}), a_{t_k})\big)\d s\right]\\
				& +  \E\left[ \int_{t_k}^{t_{k+1}} \big(\hat f(\Law(X_{t_k}^{\mathcal{D}}|\G_{t_k}), \pi_{\epsilon, t_k}(\Law(X_{t_k}^{\mathcal{D}}|\G_{t_k}))) - \hat f(\mu_{t_k}, \pi_{\epsilon, t_k}(\mu_{t_k}))\big)\d s\right]\\
				& + \E\left[\int_{t_k}^{t_{k+1}} \big(\hat f(\mu_{t_k}, \pi_{\epsilon, t_k}(\mu_{t_k})) - \hat f(\mu_s, \pi_{\epsilon, s}(\mu_s))\big)\d s \right]
				=: (II^k_1) + (II^k_2) + (II^k_3),
			\end{aligned}
		\end{equation}
		where we use $\E[f(X_{t_k}^{\mathcal{D}}, \Law((X_{t_k}^{\mathcal{D}}, a_{t_k})|\G_{t_k}), a_{t_k})]= \hat f(\Law(X_{t_k}^{\mathcal{D}}|\G_{t_k}), \pi_{\epsilon, t_k}(\Law(X_{t_k}^{\mathcal{D}}|\G_{t_k})))$ due to the fact that $a_{t_k} = a(t_k, X_{t_k}^{\mathcal{D}}, \Law((X_{t_k}^{\mathcal{D}}|\G_{t_k}), I_k) \sim \pi_{\epsilon,t_k}(\Law((X_{t_k}^{\mathcal{D}}|\G_{t_k}))$ and that the random variable $I_k$ is independent of $\G_{t_k}$. 
		For the term $(II_1^k)$, we have from \assref{ass} 
		\begin{equation}\label{estimate-IIk1}
			\begin{aligned}
				&\left|E\left[ \int_{t_k}^{t_{k+1}} \big(f(X_s^{\mathcal{D}}, \Law((X_s^{\mathcal{D}}, a_{t_k})|\G_s), a_{t_k}) - f(X_{t_k}^{\mathcal{D}}, \Law((X_{t_k}^{\mathcal{D}}, a_{t_k})|\G_{t_k}), a_{t_k})\big)\d s\right]\right| \\
				& \quad \leq  L \int_{t_k}^{t_{k+1}}\left|\E\left[|X_s^{\mathcal{D}} - X_{t_k}^{\mathcal{D}}| + \|\Law((X_s^{\mathcal{D}}, a_{t_k})|\G_s)- \Law((X_{t_k}^{\mathcal{D}}, a_{t_k})|\G_{t_k})\|_{K, FM}\right]\right|\d s\\
				& \quad \leq  2L \int_{t_k}^{t_{k+1}}\E\left[|X_s^{\mathcal{D}} - X_{t_k}^{\mathcal{D}}|^2\right]^{1/2} ds\leq C(t_{k+1} - t_k)|\mathcal{D}|^{1/2},
			\end{aligned}
		\end{equation}
		where we have used the Kantorovich duality of $\|\cdot\|_{K, {\rm FM}}$ and Cauchy-Schwarz inequality in the second inequality, and used ~\eqref{equ:mean-square-estimate-XG} in the last inequality. It then follows from \lemref{lem:diff-ol-fp} with $h(\cdot) = \hat f(\cdot, \pi_\epsilon(t_k,\cdot))$ that
		{\small\begin{equation}\label{estimate-IIk2}
				\begin{aligned}
					|(II_2^K)| = \left|\E\left[ \int_{t_k}^{t_{k+1}} \big(\hat f(\Law(X_{t_k}^{\mathcal{D}}|\G_{t_k}), \pi_{\epsilon, t_k}(\Law(X_{t_k}^{\mathcal{D}}|\G_{t_k})))) - \hat f(\mu_{t_k}, \pi_{\epsilon, t_k}(\mu_{t_k}))\big)ds\right]\right| \leq C (t_{k+1} - t_k) \cdot |\mathcal{D}|^{1/2}.
				\end{aligned}
		\end{equation}}
		For the term $(II_3^k)$, it follows from \lemref{lem:diff-ol-fp} and \equref{equ:mean-square-estimate-XG} that
		\begin{equation}\label{estimate-IIk3}
			\begin{aligned}
				&\left|\E\left[\int_{t_k}^{t_{k+1}} \big(\hat f(\mu_{t_k}, \pi_{\epsilon, t_k}(\mu_{t_k})) - \hat f(\mu_s, \pi_{\epsilon, s}(\mu_s))\big)\d s\right] \right|\\
				\leq &\left|\E\left[\int_{t_k}^{t_{k+1}} \big(\hat f(\mu_{t_k}, \pi_{\epsilon, t_k}(\mu_{t_k})) - \hat f(\Law(X_{t_k}^{\mathcal{D}}|\G_{t_k}), \pi_{\epsilon, t_k}(\Law(X_{t_k}^{\mathcal{D}}|\G_{t_k})))\big)\d s\right] \right|\\
				&+\left|\E\left[\int_{t_k}^{t_{k+1}} \big(\hat f(\mu_{s}, \pi_{\epsilon, s}(\mu_{s})) - \hat f(\Law(X_{s}^{\mathcal{D}}|\G_{s}), \pi_{\epsilon, s}(\Law(X_{s}^{\mathcal{D}}|\G_{s})))\big)\d s\right] \right|\\
				&+\left|\E\left[\int_{t_k}^{t_{k+1}} \big(\hat f(\Law(X_{s}^{\mathcal{D}}|\G_{s}), \pi_{\epsilon, s}(\Law(X_{s}^{\mathcal{D}}|\G_{s}))) - \hat f(\Law(X_{t_k}^{\mathcal{D}}|\G_{t_k}), \pi_{\epsilon, t_k}(\Law(X_{t_k}^{\mathcal{D}}|\G_{t_k})))\big)\d s\right] \right|\\
				\leq &C(t_{k+1}-t_k)|\mathcal{D}|^{1/2}.
			\end{aligned}
		\end{equation}
		Combining~\eqref{estimate-I},~\eqref{estimate-IIk1},~\eqref{estimate-IIk2} and~\eqref{estimate-IIk3}, we obtain the desired result.
		\hfill$\Box$
		\ \\
		{}
		
	\end{document}